\pdfoutput=1
\documentclass[11pt,reqno]{amsart}

\usepackage[letterpaper,hmargin=1.25in,vmargin=1.22in]{geometry}
\usepackage{amssymb}
\usepackage[all]{xy}
\SilentMatrices
\newcommand{\cxymatrix}[1]{\vcenter{\xymatrix{#1}}}
\usepackage[hyphens]{url}

\newcommand{\dfn}[1]{\textbf{\boldmath{#1}}}

\newtheorem{de}{Definition}[section]
\newtheorem{lem}[de]{Lemma}
\newtheorem{prop}[de]{Proposition}
\newtheorem{cor}[de]{Corollary}
\newtheorem{thm}[de]{Theorem}

\theoremstyle{remark}
\newtheorem{rem}[de]{Remark}
\newtheorem{ex}[de]{Example}

\DeclareMathOperator*{\colim}{colim}
\DeclareMathOperator{\supp}{supp}
\DeclareMathOperator{\im}{Im}
\DeclareMathOperator{\Prin}{Prin}
\DeclareMathOperator{\Bun}{Bun}
\DeclareMathOperator{\VB}{VB}
\DeclareMathOperator{\od}{od}
\DeclareMathOperator{\ev}{ev}
\DeclareMathOperator{\assoc}{assoc}
\DeclareMathOperator{\frme}{frame}
\DeclareMathOperator{\GL}{GL}
\DeclareMathOperator{\Isom}{Isom}

\newcommand{\Diff}{{\mathfrak{D}\mathrm{iff}}}
\newcommand{\DVect}{{\mathfrak{D}\mathrm{Vect}}}

\def \N{\mathbb{N}}
\def \Z{\mathbb{Z}}
\def \Q{\mathbb{Q}}
\def \C{\mathbb{C}}
\def \R{\mathbb{R}}

\usepackage{ifpdf}
\ifpdf  \usepackage[pdftex,bookmarks=false]{hyperref}
\else   \usepackage[dvips,bookmarks=false]{hyperref}
        \usepackage{breakurl} %
\fi
\usepackage{color}
\definecolor{darkgreen}{rgb}{0,0.45,0}
\hypersetup{colorlinks,urlcolor=blue,citecolor=darkgreen,linkcolor=darkgreen,linktocpage}

\title{Smooth classifying spaces}

\author{J. Daniel Christensen}
\email{jdc@uwo.ca}
\address{Department of Mathematics, University of Western Ontario, London, Ontario, Canada}

\author{Enxin Wu}
\email{exwu@stu.edu.cn}
\address{Department of Mathematics, Shantou University, Guangdong, P.R. China}

\date{December 27, 2020}

\begin{document}

\begin{abstract}
We develop the theory of smooth principal bundles for a smooth group $G$,
using the framework of diffeological spaces.
After giving new examples showing why arbitrary principal bundles cannot
be classified, we define $D$-numerable bundles, the smooth analogs of
numerable bundles from topology, and prove that pulling back a $D$-numerable bundle
along smoothly homotopic maps gives isomorphic pullbacks.
We then define smooth structures on Milnor's spaces $EG$ and $BG$,
show that $EG \to BG$ is a $D$-numerable principal bundle,
and prove that it classifies all $D$-numerable principal bundles
over any diffeological space.
We deduce analogous classification results for $D$-numerable diffeological
bundles and vector bundles.
\end{abstract}

\subjclass[2010]{55R35 (Primary), 57P99 (Secondary)}

\keywords{Smooth classifying space, universal principal bundle, diffeological space.}

\maketitle

\tableofcontents

\section{Introduction}

The theory of classifying spaces for principal bundles has a long history
in topology~\cite{Mi,Se,St}
and its importance is well-established.
In this paper, we develop the analogous theory for spaces with a smooth structure.
In brief, given a smooth group $G$, we put a smooth structure on $EG$ and $BG$,
define a smooth principal $G$-bundle $EG \to BG$, and show that this bundle is
universal in an appropriate sense.
We show how these results can be used to classify smooth fiber bundles as
well as smooth vector bundles, laying the foundation for future work on
smooth characteristic classes and smooth $K$-theory.

The framework we use to formulate these results is that of diffeological spaces.
A diffeological space is a set $X$ along with a chosen collection of functions
$U \to X$ (called plots), where $U$ runs over open subsets of Euclidean spaces.
The plots are subject to three simple axioms (see Definition~\ref{de:diffeological-space}).
The category of diffeological spaces and smooth maps between them is a
convenient category in which to make constructions.  It includes smooth manifolds
as a full subcategory, as well as function spaces, diffeomorphism groups and
singular spaces such as manifolds with corners and all quotients.
The geometry and homotopy theory of diffeological spaces is well-developed
(see~\cite{CSW,CW1,CW2,I1,I2,So1,So2,Wu} and references therein),
giving a solid framework in which to develop the present theory.

When classifying principal bundles in topology, one has to either 
restrict to base spaces that are paracompact or (more generally) 
consider only numerable bundles.
The issue is that it is not true in general that if $\pi : E' \to B'$
is a principal bundle and $f, g : B \to B'$ are homotopic, then the
pullback bundles $f^*(\pi)$ and $g^*(\pi)$ over $B$ are isomorphic.
The same issue arises in the smooth setting.
We use results on the homological algebra of diffeological vector spaces~\cite{Wu}
to give examples of this phenomenon that are unique to the smooth setting.
In these new examples, the group is a diffeological vector space.
We also show that a topological example~\cite{M1,M2} adapts to diffeological
spaces.
In all of these cases, the approach is to give a non-trivial principal
bundle $\pi$ over a smoothly contractible space $X$.
It then follows that the identity map $X \to X$ and a constant map $X \to X$
are smoothly homotopic but that the pullbacks of $\pi$ along these maps are not isomorphic.
It also follows that there is no classifying space for such principal bundles.

Because of this, we focus on $D$-numerable bundles, the smooth
analogs of numerable bundles.  These are bundles for which one can choose
a smooth partition of unity on the base space subordinate to a trivializing open cover.
Our first substantial result is the following:

\theoremstyle{plain}
\newtheorem*{cor:homotopy-pullback}{Corollary \ref{cor:homotopy-pullback}}
\begin{cor:homotopy-pullback}
If $\pi : E' \to B'$ is a $D$-numerable principal $G$-bundle,
and $f$ and $g$ are smoothly homotopic maps $B \to B'$,
then the pullbacks $f^*(\pi)$ and $g^*(\pi)$ are isomorphic as principal $G$-bundles over $B$.
\end{cor:homotopy-pullback}

Here $G$ is a diffeological group, which is a generalization of a Lie group.
Our method of proof follows~\cite{Hu} in outline, but requires many
changes in the details due to the smoothness requirement.
The main technical difficulty is surmounted using the following
result,\footnote{We thank Chengjie Yu for a sketch of the proof of
Proposition~\ref{prop:functional-testing-zero}, and 
Gord Sinnamon and Willie Wong for ideas that led towards this result.}
which may be of independent interest:

\newtheorem*{prop:functional-testing-zero}{Proposition \ref{prop:functional-testing-zero}}
\begin{prop:functional-testing-zero}
 There exists a smooth map $F:C^\infty(\R,\R^{\geq 0}) \to \R^{\geq 0}$ such that 
 $F(f)=0$ if and only if $f(x)=0$ for some $x \in [0,1]$.
\end{prop:functional-testing-zero} 

This function is used in place of the function sending $f$ to $\min_{x \in [0,1]} f(x)$,
which is not smooth.  

Next, given a diffeological group $G$,
we define a principal $G$-bundle $EG \to BG$, show that it is $D$-numerable,
and prove our main result:

\newtheorem*{thm:classify-principal}{Theorem \ref{thm:classify-principal}}
\begin{thm:classify-principal}
For any diffeological space $B$ and any diffeological group $G$, the pullback operation
gives a bijection $[B,BG] \to \Prin_G^D(B)$ which is natural in $B$.
\end{thm:classify-principal}

Here $[B, BG]$ denotes the set of smooth homotopy classes of maps,
and $\Prin_G^D(B)$ denotes the set of isomorphism classes of $D$-numerable
principal $G$-bundles over $B$.
Our $EG$ and $BG$ are set-theoretically the same as those of~\cite{Mi}
and~\cite{Hu}, but are endowed with diffeologies.
When the underlying topological group $D(G)$ is locally compact Hausdorff,
the space $D(BG)$ is homeomorphic to the usual classifying
space of $D(G)$.
Note that because $\pi : EG \to BG$ is itself $D$-numerable and the base $B$ is
not constrained, $\pi$ is truly universal and therefore $BG$ is the unique
diffeological space up to smooth homotopy equivalence that classifies 
$D$-numerable principal $G$-bundles.

We go on to develop the theory of associated bundles, showing
in Theorem~\ref{thm:classify-diff-bundles} that for any diffeological
space $F$, $B\Diff(F)$ classifies $D$-numerable diffeological bundles
with fiber $F$.
Here $\Diff(F)$ is the diffeological group of diffeomorphisms from $F$
to itself, and $D$-numerable diffeological bundles are the smooth
analog of numerable fiber bundles.
We show in Theorem~\ref{thm:naturality-G} that the bijection in
Theorem~\ref{thm:classify-principal} is natural in $G$.

Finally, we define diffeological vector bundles and show
in Theorem~\ref{thm:classify-vb} that for any diffeological
vector space $V$, $B \GL(V)$ classifies the $D$-numerable vector bundles
with fiber $V$.
Here $\GL(V)$ is the diffeological group of smooth linear isomorphisms
from $V$ to $V$.

\smallskip

As mentioned above, many of our arguments follow the topological arguments
in their overall strategy, but differ in the details.
We also adapt a topological result from~\cite{B} in order to correct some
minor gaps in the arguments of~\cite{Hu}.

\smallskip

\textbf{Future work}: 
This paper is intended to provide a foundation for future work.
For example, the theory of characteristic classes for bundles over
a smooth manifold $M$ has two incarnations.  One can use Chern-Weil theory
to construct explicit de Rham forms on $M$ using invariant polynomials
and a connection on the bundle.
Alternatively, one can study the singular cohomology of the classifying
space $BG$, and pull back singular cohomology classes along the classifying map
$M \to BG$.
By the results of the present paper, $BG$ is a diffeological space, and
so one can work directly with de Rham forms on $BG$.
We intend to explore the theory of connections on diffeological bundles,
and use this to apply Chern-Weil theory to the universal case, thereby
bringing the geometric and topological approaches to characteristic
classes closer together.
(See also the remarks below about~\cite{Mo}.)

We also expect the results of this paper to be useful in the study
of smooth tangent bundles and smooth $K$-theory.

\smallskip

\textbf{Relationship to other work}: 
In~\cite{I1}, Iglesias-Zemmour introduced diffeological bundles, 
an elegant generalization of smooth fiber bundles for manifolds, 
well-suited to the category of diffeological spaces. 
Our current work is based on the general theory of diffeological bundles 
established in that thesis.

In~\cite{Mo}, Mostow defined smooth versions of classifying spaces of
Lie groups, using a framework called differentiable spaces.
His focus was on studying the cohomology of such classifying spaces,
and so he did not prove analogs of our results showing that these
spaces do indeed classify certain bundles.
His results are related to the ideas described under the \emph{Future Work}
heading above.
We expect to get cleaner and more general results by working with diffeological
spaces, since the theory of diffeological spaces is better developed.
Moreover, the results of the present paper, which show that the universal
bundle is truly universal, would then complete the circle, giving a
close relationship between the Chern-Weil approach to classifying spaces
and the topological approach.

In~\cite{MW}, Magnot and Watts have independently worked on smooth
classifying spaces using diffeological spaces,
and some comments comparing the approaches are in order.
As sets, our $EG$ and $BG$ are the same as the sets defined by Magnot and Watts,
which we'll denote $EG_{MW}$ and $BG_{MW}$.
However, the diffeologies we use have fewer plots, which leads to better properties.
First, our universal bundle is $D$-numerable, while the MW universal
bundle is only weakly $D$-numerable (see~\cite[Definition 2.17]{MW}).
Moreover, our universal bundle classifies $D$-numerable principal bundles over all diffeological spaces, 
while $EG_{MW} \to BG_{MW}$ only classifies $D$-numerable principal bundles over 
diffeological spaces that are Hausdorff, second-countable and smoothly paracompact.
This greater generality is useful in practice, as one of the aims of diffeological
spaces is to encompass mapping spaces and quotients,
and also means that our classifying
space is uniquely determined up to smooth homotopy equivalence, while $BG_{MW}$ is not.
We also obtain stronger results about the classification of fiber bundles.
Magnot and Watts discuss many topics we do not, such as connections and various applications.

\smallskip

\textbf{Organization}: 
In Section~\ref{se:background}, we review diffeological spaces,
diffeological groups, diffeological bundles and principal bundles.
In Section~\ref{se:no-classifying}, we give examples of non-trivial
principal bundles over smoothly contractible base spaces,
motivating our focus on $D$-numerable bundles.
In Section~\ref{se:D-numerable}, we develop the theory of smooth partitions of unity,
$D$-numerable diffeological bundles, and $D$-numerable principal bundles,
and prove that, for $D$-numerable bundles, homotopic maps give isomorphic pullbacks.
In Section~\ref{se:classify-Dpb}, we define $EG \to BG$ and prove that it is
a universal $D$-numerable bundle, our main result,
using many of the tools from Section~\ref{se:D-numerable}.
In Sections~\ref{se:classify-bundle} and~\ref{se:classify-vb},
we develop the theories of associated bundles and diffeological vector bundles, respectively.
In Appendix~\ref{se:zeros}, we prove needed results in analysis, including
Proposition~\ref{prop:functional-testing-zero}.

\textbf{Conventions}:
Every manifold is assumed to be finite-dimensional, smooth, second-countable, Hausdorff and without 
boundary. Every manifold is equipped with the standard diffeology when viewed as a diffeological space.
Every product of diffeological spaces is equipped with the product diffeology.

\section{Background on diffeological spaces and bundles}
\label{se:background}

\subsection{Diffeological spaces}

\begin{de}[\cite{So2}]\label{de:diffeological-space}
A \dfn{diffeological space} is a set $X$
together with a specified set of functions $U \to X$ (called \dfn{plots})
for each open set $U$ in $\R^n$ and each $n \in \N$,
such that for all open subsets $U \subseteq \R^n$ and $V \subseteq \R^m$:
\begin{enumerate}
\item (Covering) Every constant function $U \to X$ is a plot.
\item (Smooth Compatibility) If $U \to X$ is a plot and $V \to U$ is smooth,
then the composite $V \to U \to X$ is also a plot.
\item (Sheaf Condition) If $U=\cup_i U_i$ is an open cover
and $U \to X$ is a function such that each restriction $U_i \to X$ is a plot,
then $U \to X$ is a plot.
\end{enumerate}

A function $f:X \rightarrow Y$ between diffeological spaces is
\dfn{smooth} if for every plot $p:U \to X$ of $X$,
the composite $f \circ p$ is a plot of $Y$.
\end{de}

An isomorphism in the category $\Diff$ of diffeological spaces and smooth maps will be called a \dfn{diffeomorphism}.

Every manifold $M$ is canonically a diffeological space with the
plots taken to be all smooth maps $U \to M$ in the usual sense.
We call this the \dfn{standard diffeology} on $M$.
It is easy to see that smooth maps in the usual sense between
manifolds coincide with smooth maps between them with the standard diffeology. 

For a diffeological space $X$ with an equivalence relation~$\sim$,
the \dfn{quotient diffeology} on $X/{\sim}$ consists of all functions
$U \to X/{\sim}$ that locally factor through the quotient map $X \to X/{\sim}$ via plots of $X$.
A \dfn{subduction} is a map diffeomorphic to a quotient map.
That is, it is a map $X \to Y$ such that the plots in $Y$
are the functions that locally lift to $X$ as plots in $X$.

For a diffeological space $Y$ and a subset $A$ of $Y$,
the \dfn{sub-diffeology} consists of all functions $U \to A$ such that 
$U \to A \hookrightarrow Y$ is a plot of $Y$.
An \dfn{induction} is an injective smooth map $A \to Y$ such that a
function $U \to A$ is a plot of $A$ if and only if $U \to A \to Y$ is a plot of $Y$.

More generally, we have the following convenient properties of the category of diffeological spaces:

\begin{thm}
The category $\Diff$ is complete, cocomplete and cartesian closed.
\end{thm}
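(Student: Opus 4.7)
The plan is to prove each of the three properties by constructing the relevant diffeologies explicitly, using the fact that the forgetful functor $U : \Diff \to \Set$ admits left and right adjoints (the discrete and indiscrete diffeologies), which suggests that limits and colimits can be computed on underlying sets and then equipped with canonical diffeologies.

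For completeness, I would first construct products and equalizers. Given a family $\{X_i\}$, take the set-theoretic product $\prod X_i$ and declare $p : U \to \prod X_i$ to be a plot if and only if each projection $\pi_i \circ p$ is a plot of $X_i$; the three axioms of Definition~\ref{de:diffeological-space} are immediate since they are checked projection-wise. For a pair $f, g : X \to Y$, the equalizer subset $E = \{x : f(x) = g(x)\}$ carries the sub-diffeology, and the universal property follows from the fact that smooth maps into $X$ landing in $E$ are exactly smooth maps into $E$. The existence of all small limits then follows from the standard construction of limits from products and equalizers. For cocompleteness, coproducts $\coprod X_i$ take plots to be functions $U \to \coprod X_i$ that locally factor through a single summand via a plot of that summand, and coequalizers use the quotient diffeology. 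One checks the axioms and universal properties directly.

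For cartesian closure, for diffeological spaces $Y$ and $Z$ I would define $C^\infty(Y,Z)$ to be the set of smooth maps $Y \to Z$, equipped with the \emph{functional diffeology}: a function $p : U \to C^\infty(Y,Z)$ is a plot if and only if the adjoint map $\tilde p : U \times Y \to Z$, $(u,y) \mapsto p(u)(y)$, is smooth (where $U \times Y$ has the product diffeology defined above). The covering and smooth-compatibility axioms follow from the corresponding properties of $Z$; the sheaf condition follows because smoothness of $\tilde p$ on $U \times Y$ can be checked on the open cover $\{U_i \times Y\}$, using that every plot of $U \times Y$ factors locally through such a product. Given this, the natural bijection
\[
\Diff(X \times Y, Z) \;\cong\; \Diff(X, C^\infty(Y,Z))
\]
is obtained by sending $f : X \times Y \to Z$ to $x \mapsto f(x, -)$; smoothness in both directions is checked by composing with plots and unwinding the definition of the functional diffeology.

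The main obstacle is really bookkeeping: verifying that the exponential assignment is well defined, i.e.\ that $x \mapsto f(x,-)$ actually lands in $C^\infty(Y,Z)$ and defines a smooth map $X \to C^\infty(Y,Z)$, and conversely that a smooth $g : X \to C^\infty(Y,Z)$ yields a smooth map $X \times Y \to Z$. Both reduce to the fact that a plot of $X \times Y$ is locally a pair $(p_X, p_Y)$ of plots, so smoothness on products can be tested after post-composition with plots of each factor; combined with the definition of the functional diffeology, the adjunction is natural in all variables and the cartesian closed structure is established.
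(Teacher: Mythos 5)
Your construction is correct and is essentially the standard argument: the paper does not prove this theorem itself but cites \cite[Section~2]{CSW}, and the diffeologies you use (product diffeology, sub-diffeology for equalizers, coproduct and quotient diffeologies for colimits, and the functional diffeology with the exponential law) are exactly the ones the paper recalls and the reference employs. The only nitpick is that a plot of $X \times Y$ is globally, not just locally, a pair of plots on the same domain, but this does not affect any of your verifications.
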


For more details, see~\cite[Section~2]{CSW}. The (co)limit of a diagram of diffeological 
spaces has as its underlying set the (co)limit of the underlying sets 
of the diffeological spaces in the diagram.
Given diffeological spaces $X$ and $Y$, 
the set $C^\infty(X,Y)$ of all smooth maps $X \to Y$ has a canonical diffeology so that the exponential 
law holds.

Every diffeological space has a canonical topology:

\begin{de}[\cite{I1}]
Given a diffeological space $X$, a subset $A \subseteq X$ is \dfn{$D$-open} if
$p^{-1}(A)$ is open in $U$ for each plot $p : U \to X$.
The $D$-open sets form a topology on $X$ called the \dfn{$D$-topology},
and we write $D(X)$ for the set $X$ equipped with this topology.
\end{de}

\begin{ex}
The $D$-topology of a manifold with the standard diffeology is the usual topology.
\end{ex}

\begin{rem}\label{rem:disjoint-union}
If $X$ is a disjoint union of $D$-open subsets $U_i$, then $X$ is the coproduct of
the $U_i$ in the category of diffeological spaces.
\end{rem}

\subsection{Diffeological bundles}

\begin{de}\label{de:bundles}
Let $F$ be a diffeological space.
A smooth map $\pi:E \to B$ between two diffeological spaces
is \dfn{trivial of fiber type $F$}
if there exists a diffeomorphism $h$
making the following diagram commute:
\[
\xymatrix@C5pt{E \ar[dr]_\pi \ar[rr]^-h && B \times F\  \ar[dl]^{p_1} \\ & B,}
\]
where $p_1$ is the projection.

The map $\pi$ is \dfn{locally trivial of fiber type $F$}
if there exists a $D$-open cover $\{ B_i \}$ of $B$ such that
$\pi|_{B_i}:\pi^{-1}(B_i) \to B_i$ is trivial of fiber type $F$ for each $i$.

The map $\pi$ is a \dfn{diffeological bundle of fiber type $F$} if the pullback of $\pi$ along any plot
of $B$ is locally trivial of fiber type $F$.

In all of these cases, we call $F$ the \dfn{fiber} of $\pi$, $E$ the \dfn{total space}, and $B$ the \dfn{base space}.

Two diffeological bundles $\pi:E \to B$ and $\pi':E' \to B$ are \dfn{isomorphic} if there exists a diffeomorphism 
$h:E \to E'$ such that $\pi = \pi' \circ h$.
\end{de}

The concept of diffeological bundle was first defined in~\cite{I1}
using groupoids.  Proposition~3.1.2 of~\cite{I1} shows that it is equivalent
to the definition given above.  Moreover, Iglesias-Zemmour has given
another equivalent characterization of diffeological bundles:

\begin{thm}[{\cite[Corollary~3.8.3]{I1} or \cite[8.19]{I2}}]
A smooth map $\pi:E \to B$ between two diffeological spaces
is a diffeological bundle of fiber type $F$ if and only if
the pullback of $\pi$ along any global plot of $B$
(that is, a plot of the form $\R^n \to B$) is trivial of fiber type $F$.
\end{thm}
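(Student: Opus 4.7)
The ``if'' direction is easy. Suppose the pullback of $\pi$ along every global plot is trivial of fiber type $F$. For any plot $q : U \to B$ with $U \subseteq \R^m$ open and any $u \in U$, choose an open ball $V \ni u$ in $U$ together with a diffeomorphism $\psi : \R^m \to V$. Then $q \circ \psi$ is a global plot of $B$, so $(q \circ \psi)^*\pi$ is trivial of fiber type $F$, and using $\psi$ to identify $V$ with $\R^m$ yields a trivialization of $q^*\pi$ over the neighborhood $V$ of $u$. Hence $q^*\pi$ is locally trivial of fiber type $F$, so $\pi$ is a diffeological bundle.

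For the ``only if'' direction, assume $\pi$ is a diffeological bundle and let $p : \R^n \to B$ be a global plot, so that the pullback $\sigma : E' \to \R^n$ is locally trivial of fiber type $F$; we must upgrade this to global triviality. The plan is to reduce to the auxiliary claim that every locally trivial diffeological bundle over $\R^n$ of fiber type $F$ is trivial, which I would prove by induction on $n$. The base case $n = 1$ is the heart of the argument: choose a locally finite trivializing cover $\{I_k\}$ of $\R$ by bounded open intervals with each consecutive pair overlapping in a subinterval, and build a global trivialization inductively. At stage $k$, modify the trivialization on $I_{k+1}$ by precomposing with a smooth extension from $I_k \cap I_{k+1}$ to all of $I_{k+1}$ of the transition diffeomorphism into $\Diff(F)$, chosen so that the new trivialization agrees on the overlap with the already-constructed trivialization on $I_1 \cup \cdots \cup I_k$. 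For the inductive step $n-1 \Rightarrow n$, decompose $\R^n = \R^{n-1} \times \R$, refine the trivializing cover to one by product opens (using smooth paracompactness of the manifold $\R^n$), apply the $n = 1$ construction uniformly in the $\R^{n-1}$ factor, and then invoke the inductive hypothesis.

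The main obstacle is ensuring that the trivialization produced by the inductive gluing in the $n = 1$ case is genuinely smooth. Extending a smooth map $I_k \cap I_{k+1} \to \Diff(F)$ to a smooth map $I_{k+1} \to \Diff(F)$ that matches on the overlap is delicate, especially when $F$ is infinite-dimensional, since $\Diff(F)$ need not admit partitions of unity. This can be managed by arranging the cover so that each overlap $I_k \cap I_{k+1}$ meets only one end of $I_{k+1}$, performing the extension via a smooth $\R \to \R$ cut-off that is the identity on the overlap and constant past it, and relying on the smoothness of composition in the diffeological group $\Diff(F)$. Propagating smoothness through the inductive step in $n$ then reduces to the fact that finite composites of smooth transition functions remain smooth, combined with the local finiteness of the refined cover.
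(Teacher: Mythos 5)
You should first note that the paper gives no proof of this statement: it is quoted from Iglesias-Zemmour \cite{I2}, so your argument has to stand on its own. Your ``if'' direction is fine, and your central device for the hard direction --- gluing two trivializations over overlapping sets of the form $X \times I$ by precomposing the transition function with a smooth cutoff reparametrization of the interval coordinate, so that no partition of unity on $\Diff(F)$ is ever needed --- is exactly the right tool; it is the ``Lemma 1 in 8.19'' of \cite{I2} that the paper itself invokes in the proof of Proposition~\ref{prop:Dpb-on-product}. Two small corrections there: the modified trivialization can only be made to agree with the old one on \emph{part} of the overlap (the cutoff must transition somewhere inside it), so you glue over that smaller region after shrinking; and for an infinite chain of intervals you need the remark that the stage-$k$ modification takes place to the right of a point tending to infinity, so the construction stabilizes on compact sets.

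The genuine gap is in the inductive step $n-1 \Rightarrow n$. First, from a refinement of the trivializing cover by product opens you cannot extract, for a fixed open $U \subseteq \R^{n-1}$, a chain of intervals $\{I_k\}$ covering all of $\R$ with the bundle trivial over each $U \times I_k$: the product opens may become arbitrarily thin in the $\R^{n-1}$-direction as the last coordinate goes to infinity, so ``applying the $n=1$ construction uniformly in the $\R^{n-1}$ factor'' only yields triviality over sets of the form $W_m \times (-m,m)$ with $W_m$ shrinking as $m$ grows, never over $U \times \R$. Second, even granted triviality over sets $U_i \times \R$, ``invoking the inductive hypothesis'' does not parse: that hypothesis concerns bundles with base $\R^{n-1}$, while your bundle has base $\R^n$; regarding $E \to \R^{n-1}$ as a bundle with fiber $\R \times F$ and trivializing it does not trivialize $\pi$ over $\R^n$ unless you additionally check that the entire gluing respects the $\R$-coordinate. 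The standard repair --- and the architecture of the proof in \cite{I2} --- is to strengthen the inductive statement: prove, by induction on $j$, that the bundle is trivial over $K \times \R^j$ for every compact box $K \subseteq \R^{n-j}$. The case $j=0$ is a finite grid argument over a compact box (your freezing trick applied one coordinate direction at a time, with the remaining coordinates as parameters), and the step $j \Rightarrow j+1$ is exactly your parametrized chain gluing with parameter space $K \times \R^j$ and the chain $\{(k-1,k+1)\}_{k \in \Z}$, whose hypotheses are supplied by the case $j$ applied to the compact boxes $K \times [k-1,k+1]$. With that reorganization your tools suffice; as written, the induction stalls.
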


Note that every locally trivial bundle is a diffeological bundle, but that
the converse fails in general.

\begin{ex}
If $B$ is a manifold and $\pi : E \to B$ is smooth, then $\pi$ is locally trivial
of fiber type $F$ if and only if it is a diffeological bundle of fiber type $F$.
Moreover, if the fiber $F$ is a manifold, then it is also equivalent for $\pi$
to be a smooth fiber bundle in the usual sense.
\end{ex}

\begin{lem}\label{lem:disjoint-union}
If $B$ is a disjoint union of $D$-open sets $B_i$ and $\pi: E \to B$ is a diffeological
bundle which is trivial over each $B_i$, then $\pi$ is trivial.
\end{lem}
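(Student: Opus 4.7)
My plan is to reduce this to a coproduct argument using Remark~\ref{rem:disjoint-union}. The key observation is that all three spaces in sight --- $E$, $B$, and $B \times F$ --- decompose as coproducts indexed by $i$, in a way compatible with the given local trivializations $h_i : \pi^{-1}(B_i) \to B_i \times F$.

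First I would check that $E = \bigsqcup_i \pi^{-1}(B_i)$ is a disjoint union of $D$-open subsets: disjointness and covering come from the corresponding facts about the $B_i$, and $D$-openness of $\pi^{-1}(B_i)$ in $E$ follows because $\pi$ is smooth and therefore continuous in the $D$-topology. Then Remark~\ref{rem:disjoint-union} identifies $E$ with the coproduct $\coprod_i \pi^{-1}(B_i)$ in $\Diff$. The same remark applied to the subsets $B_i \times F \subseteq B \times F$ (whose $D$-openness uses the smoothness, hence $D$-continuity, of the projection $B \times F \to B$) identifies $B \times F$ with $\coprod_i (B_i \times F)$. Alternatively, this latter decomposition is immediate from the cartesian closure of $\Diff$, since the functor $-\times F$ is a left adjoint and thus preserves coproducts.

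Once both identifications are in hand, the universal property of the coproduct assembles the local trivializations $h_i$ into a single smooth map $h : E \to B \times F$ over $B$, and the inverses $h_i^{-1}$ assemble in the same way into a smooth inverse $h^{-1}$. So $h$ is a diffeomorphism over $B$, exhibiting $\pi$ as trivial of fiber type $F$. I do not foresee any real obstacle: the argument is entirely formal once Remark~\ref{rem:disjoint-union} is invoked, with the only minor point being the verification that the relevant subsets are $D$-open, which is in each case a direct consequence of the smoothness of an appropriate projection.
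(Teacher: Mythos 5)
Your proposal is correct and is essentially the paper's own argument: the paper likewise notes that $E$ is the disjoint union of the $D$-open sets $E_i := \pi^{-1}(B_i)$, invokes Remark~\ref{rem:disjoint-union} to identify $E \to B$ with the coproduct of the trivial bundles $E_i \to B_i$, and concludes triviality. You merely spell out the details the paper leaves implicit (the $D$-openness of $\pi^{-1}(B_i)$ and the identification $B \times F \cong \coprod_i (B_i \times F)$), all of which are verified correctly.
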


\begin{proof}
$E$ is the disjoint union of the open sets $E_i := \pi^{-1}(B_i)$, so
by Remark~\ref{rem:disjoint-union}, $E \to B$ is the coproduct of the
trivial bundles $E_i \to B_i$ and hence is trivial.
\end{proof}

\subsection{Principal bundles}

\begin{de}[\cite{So1}]
A \dfn{diffeological group} is a group object in $\Diff$.
That is, a diffeological group is both a diffeological space and a group
such that the group operations are smooth maps.
\end{de}

\begin{ex}
Every subgroup of a diffeological group equipped with the sub-diffeology is a diffeological group.
\end{ex}

\begin{ex}
Given a diffeological space $X$, write $\Diff(X)$ for the set of all diffeomorphisms 
$X \to X$. Define $p:U \to \Diff(X)$ to be a plot if the maps $U \times X \to X$ given by 
$(u,x) \mapsto p(u)(x)$ and $(u,x) \mapsto (p(u))^{-1}(x)$ are both smooth. These plots
form a diffeology on $\Diff(X)$ making it a diffeological group.
We always equip $\Diff(X)$ with this diffeology.
\end{ex}

Here is a $G$-equivariant version of Definition~\ref{de:bundles}:

\begin{de}
Let $G$ be a diffeological group and let $\pi : E \to B$ be a smooth map
between diffeological spaces.
Assume that $G$ has a \dfn{smooth right action} on $E$, i.e., $E$ has a
right $G$-action and the action map
$E \times G \to E$ is smooth.  Also assume that $\pi(x \cdot g) = \pi(x)$
for all $x \in E$ and $g \in G$.

We say that $\pi$ is a \dfn{trivial principal $G$-bundle} if there is a $G$-equivariant
diffeomorphism $h$ making the following diagram commute:
\[
\xymatrix@C5pt{E \ar[dr]_\pi \ar[rr]^-h && B \times G\  \ar[dl]^{p_1} \\ & B.}
\]
Here the action of $G$ on $B \times G$ is defined by $(b, g) \cdot g' = (b, g g')$.

We say that $\pi$ is a \dfn{locally trivial principal $G$-bundle}
if there exists a $D$-open cover $\{ B_i \}$ of $B$ such that
$\pi|_{B_i}:\pi^{-1}(B_i) \to B_i$ is a trivial principal $G$-bundle for each $i$.

The map $\pi$ is a \dfn{(diffeological) principal $G$-bundle} if the pullback of $\pi$ along any plot
of $B$ is a locally trivial $G$-bundle.

Two principal $G$-bundles $\pi:E \to B$ and $\pi':E' \to B$ are \dfn{isomorphic} if there exists a $G$-equivariant 
diffeomorphism $h:E \to E'$ such that $\pi = \pi' \circ h$. 
\end{de}

Here is an equivalent characterization of principal bundles which will be used frequently later:

\begin{thm}[{\cite[8.11, 8.13]{I2}}]\label{thm:principal}
 If $E \to B$ is a principal $G$-bundle, then the smooth map
 $a : E \times G \to E \times E$ given by $(x,g) \mapsto (x,x \cdot g)$ is an induction
 and there is a diffeomorphism $B \cong E/G$ commuting with the maps from $E$.
 Conversely, if $E \times G \to E$ is a smooth action of a diffeological group $G$ on $E$
 and the map $a$ is an induction, then the quotient map $E \to E/G$ is a principal $G$-bundle.
\end{thm}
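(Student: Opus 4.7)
The plan is to prove both directions separately, exploiting local triviality of $\pi$ in the forward direction and using that the quotient map $E \to E/G$ is a subduction (by construction of the quotient diffeology) in the converse direction.

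For the forward direction, assume $\pi\colon E \to B$ is a principal $G$-bundle. I would first establish that the action is free with orbits equal to the fibers of $\pi$, which is visible in the local model $U_i \times G$ obtained by pulling $\pi$ back along any plot of $B$ and choosing a trivialization. This furnishes a bijection $\bar\pi\colon E/G \to B$ commuting with the maps from $E$. Smoothness of $\bar\pi$ follows from the universal property of the quotient diffeology applied to $\pi$, and smoothness of $\bar\pi^{-1}$ follows because any plot of $B$ locally lifts through a local trivialization to a plot of $E$, which then composes with the quotient map to give the desired plot of $E/G$. To show $a\colon E \times G \to E \times E$ is an induction, injectivity follows from freeness and smoothness is clear, so the content is: whenever $f = (\alpha, g)\colon W \to E \times G$ is a function such that $a \circ f = (\alpha, \alpha \cdot g)$ is a plot of $E \times E$, then $g$ is a plot of $G$. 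Here $\alpha$ is automatically a plot, and pulling back to a trivializing cover of $B$, the $G$-component of $\alpha$ is a plot $g_\alpha$ of $G$ and the $G$-component of $\alpha \cdot g$ is $g_\alpha g$, also a plot of $G$, so $g = g_\alpha^{-1}(g_\alpha g)$ is a plot of $G$ by smoothness of the diffeological group operations.

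For the converse, assume the $G$-action is smooth and $a$ is an induction. Injectivity of $a$ yields freeness. Given a plot $p\colon U \to E/G$, since $q\colon E \to E/G$ is a subduction, after shrinking $U$ we may assume $p = q \circ \tilde p$ for some plot $\tilde p\colon U \to E$. Define $\phi\colon U \times G \to p^* E \subseteq U \times E$ by $\phi(u, g) = (u, \tilde p(u) \cdot g)$. This is smooth, and it is bijective because the orbits of the $G$-action coincide with the fibers of $q$. To see $\phi^{-1}$ is smooth, suppose $(\alpha, \beta)\colon W \to p^* E$ is a plot; then $(\tilde p \circ \alpha, \beta)$ is a plot of $E \times E$ whose image lies in $E \times_B E$, the image of $a$. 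The induction property of $a$ then gives a smooth $\gamma\colon W \to G$ such that $(\tilde p \circ \alpha) \cdot \gamma = \beta$, and $\phi^{-1}(\alpha, \beta) = (\alpha, \gamma)$ is then a plot.

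The main obstacle is establishing the induction property of $a$ in the forward direction, where one must genuinely use smoothness of multiplication and inversion in the diffeological group $G$, together with local triviality, to reconstruct the group-component of a plot from the given plot of $E \times E$. In the converse direction, the induction property is provided as input, and the remaining verifications reduce to local-to-global arguments via the sheaf axiom for plots.
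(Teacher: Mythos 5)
This statement is quoted background: the paper gives no proof of it, but cites it from Iglesias-Zemmour \cite[8.11, 8.13]{I2}, so there is no in-paper argument to compare against. Your proposal is a correct, self-contained proof and follows essentially the standard route of that reference: in the forward direction you recover the $G$-component of a lift by passing through a $G$-equivariant local trivialization of the pullback along $\pi \circ \alpha$ and then use smoothness of multiplication and inversion, while in the converse direction you use that $q \colon E \to E/G$ is a subduction to get local lifts $\tilde p$ and the induction property of $a$ to show that $\phi(u,g) = (u, \tilde p(u)\cdot g)$ has smooth inverse. The only points left implicit are routine: pulling $\pi$ back along a constant plot exhibits each fiber as a $G$-equivariant copy of $G$, which gives both freeness and nonemptiness of the fibers (the latter is what makes $\bar\pi \colon E/G \to B$ surjective, hence a bijection), and your $\phi$ is visibly $G$-equivariant, as the definition of a trivial principal bundle requires.
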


\begin{rem}\label{rem:trivial-pb}
It follows from the above theorem that a principal bundle is trivial if and only if it has a smooth global section~\cite[8.12]{I2}.
Therefore, a principal bundle is locally trivial as a principal bundle if and only if
it is locally trivial as a diffeological bundle.
\end{rem}

As another application of the above theorem, we have:

\begin{prop}[{\cite[8.15]{I2}}]\label{prop:G->G/H-principal-bundle}
Let $G$ be a diffeological group, and let $H$ be a subgroup of $G$ with the sub-diffeology.
Then $G \to G/H$ is a principal $H$-bundle,
where $G/H$ is the set of left cosets of $H$ in $G$, with the quotient diffeology.
\end{prop}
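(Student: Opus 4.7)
The plan is to apply the converse direction of Theorem~\ref{thm:principal} to the right multiplication action of $H$ on $G$. I would first equip $G$ with the map $G \times H \to G$, $(g,h) \mapsto gh$; this is smooth, being the composite of $\mathrm{id}_G \times \iota$ (where $\iota \colon H \hookrightarrow G$ is the inclusion, smooth because $H$ has the sub-diffeology) with the smooth multiplication of $G$. The orbits of this action are exactly the left cosets $gH$, and the orbit space carries, by construction, the quotient diffeology, so it agrees with $G/H$ as in the statement.

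By the converse direction of Theorem~\ref{thm:principal}, it then remains to verify that the map
\[
 a \colon G \times H \longrightarrow G \times G, \qquad (g,h) \longmapsto (g, gh),
\]
is an induction. Smoothness of $a$ is clear, since it is built from the smooth multiplication of $G$ restricted along $\iota$ in the second factor, and $a$ is injective because the pair $(g, gh)$ determines $h = g^{-1}(gh)$. For the induction condition, I would take an arbitrary function $\varphi = (\varphi_1, \varphi_2) \colon U \to G \times H$ whose composite $a \circ \varphi = (\varphi_1, \varphi_1 \cdot \varphi_2)$ is a plot of $G \times G$, and show that $\varphi$ is itself a plot of $G \times H$.

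From the hypothesis, both $\varphi_1$ and $\varphi_1 \cdot \varphi_2$ are plots of $G$, so by smoothness of inversion and multiplication in $G$ the function
\[
 \varphi_2 \;=\; \varphi_1^{-1} \cdot (\varphi_1 \cdot \varphi_2) \colon U \longrightarrow G
\]
is a plot of $G$ that factors through $H$. Because $H$ carries the sub-diffeology, this forces $\varphi_2$ to be a plot of $H$, and hence $\varphi$ is a plot of $G \times H$. The main (and only) subtle point is precisely this step: one must promote a plot of $G$ whose image lies in $H$ to a plot of $H$, which is exactly the defining property of the sub-diffeology. Everything else is a direct unwinding of definitions, and once $a$ is shown to be an induction, Theorem~\ref{thm:principal} immediately gives that $G \to G/H$ is a principal $H$-bundle.
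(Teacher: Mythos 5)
Your proof is correct: the paper itself gives no argument for this proposition, citing it directly from \cite[8.15]{I2}, and your derivation via the converse direction of Theorem~\ref{thm:principal} --- checking that $(g,h)\mapsto(g,gh)$ is an induction by recovering $\varphi_2=\varphi_1^{-1}\cdot(\varphi_1\cdot\varphi_2)$ and using the sub-diffeology on $H$ --- is exactly the standard proof underlying that citation. No gaps; all the relevant points (smoothness of the action, freeness via injectivity of $a$, identification of the orbit space with $G/H$ and its quotient diffeology) are addressed.
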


Note that we are \emph{not} requiring the subgroup $H$ to be closed.
In particular, we have the following interesting example:

\begin{ex}[{\cite[8.38]{I2}}]\label{ex:irrational-torus}
Let $T^2=\R^2/\Z^2$ be the usual $2$-torus,
and let $\R_\theta$ be the image of the line $\{ y = \theta x \}$
under the quotient map $\R^2 \to T^2$, with $\theta$ a fixed irrational number.
Note that $T^2$ is an abelian Lie group,
and $\R_\theta$ is a dense subgroup which is diffeomorphic to $\R$.
The quotient group $T^2_\theta := T^2/\R_\theta$ with the quotient diffeology
is called the \dfn{irrational torus of slope $\theta$}, and
by the above proposition,
the quotient map $T^2 \to T^2_\theta$ is a principal $\R$-bundle.
\end{ex}

\begin{prop}[{\cite[8.10, 8.12]{I2}}]\label{prop:pullback}
If $f:B' \to B$ is a smooth map and $E \to B$ is a diffeological
bundle of fiber type $F$ (resp.\ a principal $G$-bundle),
then so is the pullback $f^*(E) \to B'$.
Moreover, pullback preserves triviality and local triviality.
\end{prop}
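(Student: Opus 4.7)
The plan is to construct the pullback concretely as
\[
  f^*(E) \;=\; \{(b',e) \in B' \times E \mid f(b') = \pi(e)\}
\]
with the sub-diffeology inherited from $B' \times E$, and with projection $\pi' : f^*(E) \to B'$ given by $(b',e) \mapsto b'$. In the principal bundle case, $G$ acts on $f^*(E)$ by $(b',e) \cdot g = (b',e\cdot g)$; this is smooth because it is the restriction of $\mathrm{id}_{B'} \times (\text{action on } E)$, and clearly preserves the fibers of $\pi'$. Throughout, I would rely on the fact that the sub-diffeology realizes the pullback in $\Diff$, so canonical set-level identifications of iterated pullbacks are automatically diffeomorphisms.

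First I would verify that pullback preserves triviality. If $\pi$ is trivialized by $h : E \to B \times F$ over $B$, then $(b',e) \mapsto (b', p_2(h(e)))$ gives a smooth map $f^*(E) \to B' \times F$ over $B'$, with inverse $(b',x) \mapsto (b', h^{-1}(f(b'),x))$; both are smooth by the universal property of the sub-diffeology and of products. In the principal case this map is $G$-equivariant by construction, since $h$ was. Next, if $\pi$ is locally trivial over a $D$-open cover $\{B_i\}$ of $B$, then $\{f^{-1}(B_i)\}$ is a $D$-open cover of $B'$ (smooth maps are $D$-continuous), and $(\pi')^{-1}(f^{-1}(B_i))$ is canonically the pullback of $\pi|_{\pi^{-1}(B_i)}$ along $f|_{f^{-1}(B_i)}$, hence trivial by the previous step.

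For the main claim, suppose $\pi$ is a diffeological bundle of fiber type $F$, and let $p : U \to B'$ be a plot. The iterated pullback $p^*(\pi')$ over $U$ is canonically diffeomorphic (over $U$) to $(f \circ p)^*(\pi)$, because both fit as the pullback of $\pi$ along the composite $f \circ p$. Since $f \circ p$ is a plot of $B$ and $\pi$ is a diffeological bundle, $(f\circ p)^*(\pi)$ is locally trivial of fiber type $F$, so $p^*(\pi')$ is as well; this shows $\pi'$ is a diffeological bundle. The principal bundle case follows the same pattern, using Remark~\ref{rem:trivial-pb} to reduce local triviality as a principal bundle to local triviality as a diffeological bundle together with compatibility of the $G$-action, which is built into our construction.

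The step I expect to be the most delicate is the identification of the iterated pullback $p^*(\pi') \cong (f\circ p)^*(\pi)$ as diffeological spaces (not merely as sets), together with the verification that in the trivial case the smooth bijection $f^*(E) \to B' \times F$ is a diffeomorphism. Both reduce to the universal property of the sub-diffeology on pullbacks in $\Diff$, but care is needed to avoid a circular appeal to the very proposition being proved; the cleanest route is to check these by hand using only the definitions of the sub-diffeology and the product diffeology from the start of the section.
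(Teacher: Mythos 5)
Your proof is correct: the concrete realization of the pullback as a subspace of $B'\times E$ with the sub-diffeology, the explicit trivialization in the trivial case, and the pasting of pullback squares to reduce the diffeological/principal bundle condition along a plot $p$ of $B'$ to the plot $f\circ p$ of $B$ are exactly the standard argument. The paper itself offers no proof, citing \cite[8.10, 8.12]{I2} instead, and your verification matches the one found there; the only cosmetic point is that in the principal case the appeal to Remark~\ref{rem:trivial-pb} is unnecessary, since your trivializations are already $G$-equivariant and the identification $p^*(f^*(\pi))\cong (f\circ p)^*(\pi)$ is $G$-equivariant as well.
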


The following result follows immediately from~\cite[8.13 Note 2]{I2}
and will be useful later:

\begin{prop}\label{prop:commsq=>pullbackdiff}
 Let 
 \[
  \xymatrix{E' \ar[r]^f \ar[d]_{\pi'} & E \ar[d]^{\pi} \\ B' \ar[r]_g & B}
 \]
 be a commutative square in $\Diff$, where $\pi'$ and $\pi$ are principal $G$-bundles
 and $f$ is $G$-equivariant. Then $\pi'$ is isomorphic to $g^*(\pi)$ as principal $G$-bundles over $B'$.
\end{prop}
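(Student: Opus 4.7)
The plan is to construct an explicit candidate isomorphism from $E'$ to $g^*(E)$ by the universal property of the pullback, verify $G$-equivariance directly, and then upgrade it to a diffeomorphism by a local trivialization argument that exploits the principal bundle structure.

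First I would invoke the universal property of the pullback $g^*(E) = B' \times_B E$ in $\Diff$: since $g \circ \pi' = \pi \circ f$, the pair $(\pi', f)$ induces a unique smooth map $\phi : E' \to g^*(E)$ over $B'$, given concretely by $\phi(e') = (\pi'(e'), f(e'))$. Using the $G$-equivariance of $f$ and the $G$-invariance of $\pi'$, one checks directly that $\phi$ intertwines the $G$-actions, where $g^*(E)$ carries the action pulled back from $E$. Thus $\phi$ is a $G$-equivariant smooth map of principal $G$-bundles over $B'$, and it only remains to show it is a diffeomorphism.

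Next I would prove $\phi$ is bijective by a fibrewise argument based on Theorem~\ref{thm:principal}. For injectivity, if $\phi(e_1') = \phi(e_2')$, then $\pi'(e_1') = \pi'(e_2')$, so there is a unique $h \in G$ with $e_2' = e_1' \cdot h$; applying $f$ and using $G$-equivariance gives $f(e_1') = f(e_1') \cdot h$, and freeness of the $G$-action on the fibers of $\pi$ forces $h = e$. For surjectivity, given $(b', e) \in g^*(E)$, local triviality of $\pi'$ supplies some $e_0' \in \pi'^{-1}(b')$, and since $f(e_0')$ and $e$ both lie in $\pi^{-1}(g(b'))$, transitivity of the action on that fiber yields $h \in G$ with $e = f(e_0') \cdot h = f(e_0' \cdot h)$, so $\phi(e_0' \cdot h) = (b', e)$.

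Finally, to see that $\phi^{-1}$ is smooth I would argue locally. For each $b_0' \in B'$, choose a $D$-open $V$ containing $g(b_0')$ over which $\pi$ trivializes; then by $D$-continuity of the smooth map $g$ and local triviality of $\pi'$, choose a $D$-open $U \ni b_0'$ over which $\pi'$ trivializes and with $g(U) \subseteq V$. In the induced trivializations, $\phi$ corresponds to a smooth $G$-equivariant map $U \times G \to U \times G$ over $U$, which must have the form $(u, h) \mapsto (u, \alpha(u) h)$ for a smooth transition function $\alpha : U \to G$; its set-theoretic inverse $(u, h) \mapsto (u, \alpha(u)^{-1} h)$ is then smooth by smoothness of inversion in $G$. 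Patching over an open cover of $B'$ gives smoothness of $\phi^{-1}$ globally.

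The main obstacle I anticipate is precisely this last step: producing the inverse as a function of sets is cheap, but to get smoothness one is forced to work in common local trivializations of $\pi$ and $\pi'$ linked by $g$, which requires combining the local triviality of both bundles with the $D$-continuity of $g$. This local description is essentially a proof of the fact cited by the authors, namely that a $G$-equivariant smooth map of principal $G$-bundles over the same base is automatically an isomorphism (Iglesias-Zemmour, 8.13 Note 2); invoking that result directly bypasses the explicit trivialization calculation.
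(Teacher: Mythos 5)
There is a genuine gap in your final step. The bundles in this proposition are \emph{diffeological} principal $G$-bundles, which by definition are only required to become locally trivial after pullback along each plot of the base; they need not be locally trivial over any $D$-open cover of $B$ or $B'$. The paper stresses this (see the remark after the characterization of diffeological bundles and Example~\ref{ex:irrational-torus}: $T^2 \to T^2_\theta$ is a principal $\R$-bundle over a space whose $D$-topology is indiscrete, so it admits no nontrivial $D$-open trivializing sets). Hence your choices ``a $D$-open $V \ni g(b_0')$ over which $\pi$ trivializes'' and ``a $D$-open $U \ni b_0'$ over which $\pi'$ trivializes'' simply may not exist, and the transition-function argument for smoothness of $\phi^{-1}$ collapses. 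The same issue infects your surjectivity step, where you invoke ``local triviality of $\pi'$'' to produce a point of $\pi'^{-1}(b')$; what actually guarantees nonempty fibers (and transitivity and freeness of $G$ on fibers) is Theorem~\ref{thm:principal}, which gives $B' \cong E'/G$ and says the action map $E' \times G \to E' \times E'$ is an induction.

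The repair is to remember that smoothness of $\phi^{-1}$ is a plot-wise condition: it suffices to show that for every plot $p : U \to g^*(E)$ the composite $\phi^{-1} \circ p$ is a plot of $E'$. Pulling the whole square back along the underlying plot $U \to B'$ (and along its composite with $g$ for $\pi$), the definition of diffeological principal bundle \emph{does} give local trivializations over open subsets of $U$, and there your trivialization/transition-function computation is valid; this is essentially how one proves the fact the authors cite. The paper itself gives no argument at all: it derives the proposition ``immediately'' from \cite[8.13 Note~2]{I2}, the statement that a smooth $G$-equivariant map of principal $G$-bundles over the same base is automatically an isomorphism, applied to your map $\phi : E' \to g^*(E)$ (which exists and is equivariant exactly as in your first paragraph, using Proposition~\ref{prop:pullback} to know $g^*(\pi)$ is a principal $G$-bundle). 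So your overall outline is the right one, but as written the key smoothness step uses a hypothesis (local triviality over the base) that these bundles do not satisfy.
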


\section{There is no classifying space for all diffeological principal bundles}
\label{se:no-classifying}

This section motivates our focus on $D$-numerable bundles in later sections.

We first recall the situation in topology.
Let $G$ be a topological group.
We would like to have a space $BG$ such that for any topological space $X$,
the set of isomorphism classes of principal $G$-bundles over $X$ naturally 
bijects with the set of homotopy classes of maps from $X$ to $BG$.
This is possible when the space $X$ is restricted to being paracompact,
or, more generally, if one considers only numerable bundles, but is
not possible in general.
One way to show that it is not possible is as follows.
First observe that $BG$ must be path-connected, by taking the case where $X$ is a point.
Next, one shows that there is a non-trivial
principal $G$-bundle $\pi$ over a contractible space $X$.  Then there are at
least two non-isomorphic principal $G$-bundles over $X$, but only one homotopy
class of maps $X \to BG$ for any path-connected space $BG$.
In addition, such an example shows that, in general, homotopic maps do not
have isomorphic pullbacks:  the pullback of $\pi$ along the identity map
$X \to X$ is $\pi$, while the pullback of $\pi$ along a constant map is trivial.

Analogous results hold in the diffeological context, and the same technique is used.
In the first part of this section, we give a family of examples of non-trivial
diffeological principal bundles over smoothly contractible base spaces, using the
theory of diffeological vector spaces from~\cite{Wu}.
These examples are new, and we are not aware of similar topological examples.

Then, in Example~\ref{ex:Goodwillie}, we give another example of a non-trivial
principal bundle over a smoothly contractible base space.
This example is even locally trivial, and so shows that restricting to this
subclass of diffeological principal bundles does not solve the problem.
This example is a straightforward adaptation of an example from topology~\cite{M1,M2}.

We begin by recalling the concept of smooth homotopy~\cite[Section 3.1]{CW1}:

\begin{de}
Given diffeological spaces $X$ and $Y$, two smooth maps $f,g:X \to Y$ are called \dfn{smoothly homotopic} 
if there exists a smooth map $F: X \times \R \to Y$ such that $F(x, 0)=f(x)$ and $F(x, 1)=g(x)$ for each $x$ in $X$.
A diffeological space $X$ is \dfn{smoothly contractible} if the identity map
is smoothly homotopic to a constant map.
\end{de}

Given diffeological spaces $X$ and $Y$, the relation of smooth homotopy on $C^\infty(X,Y)$ is an equivalence relation, 
and we denote the quotient set by $[X,Y]$.

\begin{de}
A \dfn{diffeological vector space} $V$ is both a diffeological space and an $\R$-vector space such that addition $V \times V \to V$ 
and scalar multiplication $\R \times V \to V$ are both smooth.
\end{de}

Observe that every diffeological vector space $V$ is smoothly contractible
via the smooth homotopy sending $(v, t)$ to $t v$.
A \dfn{short exact sequence} in the category $\DVect$ of diffeological vector spaces and smooth linear maps
is a diagram
\begin{equation}\label{ses}
\xymatrix{0 \ar[r] & V_1 \ar[r]^i & V_2 \ar[r]^j & V_3 \ar[r] & 0}
\end{equation}
which is a short exact sequence of vector spaces such that $i$ is a linear induction
and $j$ is a linear subduction.  For any such short exact sequence, we have a commutative triangle
\[
\xymatrix@C5pt{& V_2 \ar[dl]_j \ar[dr]^\pi \\ V_3 && V_2/V_1, \ar[ll]}
\]
where the horizontal map is an isomorphism of diffeological vector spaces. 
Hence, by Proposition~\ref{prop:G->G/H-principal-bundle}, $j$ is a diffeological principal $V_1$-bundle. 
This bundle is trivial if and only if the short exact sequence~\eqref{ses} splits smoothly (see~\cite[Theorem~3.16]{Wu}).
In particular, it follows that if~\eqref{ses} does not split smoothly,
then there is no classifying space for principal $V_1$-bundles.

\begin{ex}\label{ex:Borel}
Let $j : C^\infty(\R, \R) \to \prod_{\omega} \R$ be defined by $j(f)_n := f^{(n)}(0)$,
and let $K$ be the kernel.  It is shown in~\cite[Example~4.3]{Wu} that this is
a short exact sequence of diffeological vector spaces that does not split smoothly.
Therefore, there is no classifying space for principal $K$-bundles.
\end{ex}

\medskip

We now give additional examples of this flavour, using some results
from~\cite{Wu}.

\begin{de}
A diffeological vector space $P$ is called \dfn{projective} if for any linear subduction $\pi:W_1 \to W_2$ and any 
smooth linear map $f:P \to W_2$, there exists a smooth linear map $g:P \to W_1$ such that $f = \pi \circ g$.
\end{de}

\begin{prop}[{\cite[Theorem~6.13]{Wu}}]
For every diffeological vector space $V$, there exists a projective
diffeological vector space $P$ with a linear subduction $P \to V$.
\end{prop}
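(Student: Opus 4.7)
The plan is to establish that $\DVect$ has enough projectives by constructing the free diffeological vector space functor $F : \Diff \to \DVect$, left adjoint to the forgetful functor $U : \DVect \to \Diff$, and producing a projective of the form $F(X)$ together with a linear subduction onto $V$.

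First I would make $F(X)$ concrete: its underlying vector space is the free $\R$-vector space on the set $X$, and its diffeology is the smallest vector space diffeology such that the basis inclusion $\iota_X : X \to F(X)$ is smooth. Thus a plot of $F(X)$ is locally of the form $u \mapsto \sum_{i=1}^n a_i(u)\,\iota_X(x_i(u))$ with $a_i : U \to \R$ smooth and $x_i : U \to X$ plots, and smooth linear maps $F(X) \to W$ correspond via $\iota_X$ to smooth maps $X \to W$. For the given $V$, I would take $X := \bigsqcup_p U_p$ indexed by all plots $p$ of $V$, where $U_p$ is the domain of $p$. The tautological map $e : X \to V$ defined by $e|_{U_p} = p$ is smooth and a subduction (each plot of $V$ factors through it by construction), so by adjunction it induces a smooth linear $\tilde e : F(X) \to V$ which remains a subduction, since any plot of $V$ lifts locally through $X$ and hence through $\iota_X$ into $F(X)$.

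The heart of the proof is showing that $P := F(X)$ is projective. Given a linear subduction $\pi : W_1 \to W_2$ and a smooth linear $f : P \to W_2$, adjunction turns $f$ into a smooth map $\bar f : X \to W_2$. On each component $U_p \subseteq X$ the restriction $\bar f_p$ is a plot of $W_2$, so by the subduction property of $\pi$ there is an open cover $\{V_{p,\alpha}\}$ of $U_p$ and smooth maps $h_{p,\alpha} : V_{p,\alpha} \to W_1$ with $\pi \circ h_{p,\alpha} = \bar f_p|_{V_{p,\alpha}}$. Since $U_p$ is a Euclidean open, it carries a smooth locally finite partition of unity $\{\rho_{p,\alpha}\}$ subordinate to this cover; I would then define
\[
 g_p(u) \;:=\; \sum_\alpha \rho_{p,\alpha}(u)\, h_{p,\alpha}(u) \;\in\; W_1,
\]
where the sum is taken in the diffeological vector space $W_1$. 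Local finiteness together with smoothness of addition and scalar multiplication on $W_1$ guarantees that $g_p$ is smooth, and linearity of $\pi$ combined with $\sum_\alpha \rho_{p,\alpha} \equiv 1$ yields $\pi \circ g_p = \bar f_p$. Assembling the $g_p$ over all components produces a smooth $g : X \to W_1$ with $\pi \circ g = \bar f$, which by adjunction extends to a smooth linear $\hat g : P \to W_1$ satisfying $\pi \circ \hat g = f$.

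The main obstacle is precisely this projectivity step: the local lifts furnished by a subduction need not patch, and the resolution has two ingredients that must cooperate. First, choosing $X$ to be a disjoint union of Euclidean opens makes smooth partitions of unity available on each component; second, one exploits the smooth vector space operations on $W_1$ to average the local lifts against the partition. Without this specific choice of $X$, $F$ applied to a generic diffeological space would fail to be projective, so the careful packaging of the plots of $V$ into the indexing coproduct is what drives the argument.
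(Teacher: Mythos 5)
Your proposal is correct and is essentially the argument behind the cited result: the paper does not prove this proposition itself but quotes \cite[Theorem~6.13]{Wu}, whose proof likewise takes the free diffeological vector space on the disjoint union of the domains of all plots of $V$ (equivalently $\oplus_p F(U_p)$), checks that the tautological evaluation map is a linear subduction, and establishes projectivity by patching local lifts along a linear subduction with smooth partitions of unity on the Euclidean domains. So your route matches the standard proof, including the key use of linearity of $\pi$ to average the local lifts.
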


\begin{thm}\label{thm:non-projective}
Let $V$ be a non-projective diffeological vector space.
Then there exists a diffeological vector space $W$ and 
a non-trivial diffeological principal $W$-bundle over $V$.
This implies that there is no classifying space for principal $W$-bundles.
\end{thm}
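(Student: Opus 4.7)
The plan is to build the bundle out of a projective cover of $V$ and then detect non-triviality via the failure of smooth splitting.

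First, I would invoke the preceding proposition to pick a projective diffeological vector space $P$ together with a linear subduction $\pi : P \to V$. Let $W := \ker(\pi)$, equipped with the sub-diffeology, and let $i : W \to P$ be the inclusion. Since the underlying sequence of vector spaces is short exact, $i$ is a linear induction, and $\pi$ is a linear subduction by construction. Thus
\[
\xymatrix{0 \ar[r] & W \ar[r]^i & P \ar[r]^\pi & V \ar[r] & 0}
\]
is a short exact sequence in $\DVect$. By the discussion preceding Example~\ref{ex:Borel}, this produces a diffeological principal $W$-bundle $\pi : P \to V$, and this bundle is trivial if and only if the sequence splits smoothly.

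The key step is to show that the sequence cannot split smoothly under the hypothesis that $V$ is non-projective. Suppose for contradiction that there is a smooth linear section $s : V \to P$ with $\pi \circ s = \mathrm{id}_V$. I would then check that $V$ inherits projectivity from $P$ via this retraction: given any linear subduction $\tau : W_1 \to W_2$ and any smooth linear map $f : V \to W_2$, apply projectivity of $P$ to the smooth linear map $f \circ \pi : P \to W_2$ to obtain a smooth linear lift $\tilde g : P \to W_1$ with $\tau \circ \tilde g = f \circ \pi$. Then $g := \tilde g \circ s : V \to W_1$ is smooth and linear, and satisfies $\tau \circ g = f \circ \pi \circ s = f$. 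This exhibits $V$ as projective, contradicting the hypothesis. Hence the sequence does not split, so $\pi : P \to V$ is a non-trivial principal $W$-bundle over $V$.

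For the final sentence, I would note that $V$ is smoothly contractible via $(v,t) \mapsto tv$, so the identity map $\mathrm{id}_V$ is smoothly homotopic to a constant map $c : V \to V$. The pullback of $\pi$ along $\mathrm{id}_V$ is $\pi$ itself, which is non-trivial, while the pullback along $c$ factors through a point and is therefore trivial. Thus smoothly homotopic maps yield non-isomorphic pullbacks, and as in the introductory discussion of this section this obstructs the existence of any classifying space for principal $W$-bundles. The main obstacle is really just verifying that a smooth linear section would force $V$ to be projective; everything else is a direct application of the projective cover proposition and the splitting criterion cited from~\cite{Wu}.
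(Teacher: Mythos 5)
Your proposal is correct and follows essentially the same route as the paper: take a projective cover $P \to V$, set $W = \ker(P \to V)$, observe the resulting short exact sequence cannot split smoothly because $V$ is not projective, and conclude non-triviality from the splitting criterion. The only difference is cosmetic: where the paper cites the fact that projectives are closed under summands (\cite[Proposition~6.11(3)]{Wu}), you verify the retract argument inline, and you spell out the contractibility argument for the non-existence of a classifying space, which the paper delegates to the discussion earlier in the section.
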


\begin{proof}
Let $W$ be the kernel of a linear subduction $P \to V$ with $P$ projective.
Since projectives are closed under summands (\cite[Proposition~6.11(3)]{Wu}),
the sequence does not split smoothly.
Thus there is a non-trivial principal $W$-bundle over $V$.
\end{proof}

\begin{ex}
We saw in Example~\ref{ex:Borel} that $\prod_{\omega} \R$ is not projective.
It is shown in~\cite[Example~6.9]{Wu} that $\R$ with the indiscrete\footnote{An
indiscrete diffeological space has all possible functions as plots, and hence has the indiscrete $D$-topology.}
diffeology is also not projective.
\end{ex}

To obtain further examples, including examples where the base space is
not a diffeological vector space, we make use of the following construction.

\begin{prop}[{\cite[Proposition~3.5]{Wu}}]
For every diffeological space $X$, there is a diffeological vector space $F(X)$ together with a smooth map 
$i:X \to F(X)$ such that the following universal property holds: for any diffeological vector space $V$ and any smooth 
map $f:X \to V$, there exists a unique smooth linear map $g:F(X) \to V$ satisfying $f = g \circ i$.
\end{prop}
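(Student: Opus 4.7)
The plan is to construct $F(X)$ as the $\R$-vector space freely generated by the underlying set of $X$, with $i:X\to F(X)$ sending $x$ to the corresponding basis vector $e_x$, and then to equip $F(X)$ with an appropriate diffeology. The tricky point will be to choose a diffeology that is simultaneously fine enough that the induced linear map $g:F(X)\to V$ is smooth for every smooth $f:X\to V$ into a diffeological vector space $V$, and coarse enough that $F(X)$ is itself a diffeological vector space with $i$ smooth.

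I would declare a function $q:U\to F(X)$ to be a plot precisely when every $u_0\in U$ has an open neighborhood $W\subseteq U$ on which $q$ admits a decomposition
\[
q(u)\;=\;\sum_{k=1}^n r_k(u)\, i(p_k(u)),\qquad u\in W,
\]
for some $n\in\N$, smooth maps $r_k:W\to\R$, and plots $p_k:W\to X$. The three axioms of Definition~\ref{de:diffeological-space} then need checking. Covering is immediate by taking $n=1$, with $r_1$ constant and $p_1$ constant. Smooth compatibility follows by precomposing each $r_k$ and $p_k$ with a smooth map of parameter domains. The sheaf condition is built into the local nature of the definition.

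Next I would verify that $F(X)$ is a diffeological vector space and that $i$ is smooth. For addition, a local decomposition of a sum $q+q'$ of plots is obtained by concatenating local decompositions of the summands on a common refinement of their neighborhoods; scalar multiplication $\R\times F(X)\to F(X)$ is handled the same way, multiplying each $r_k$ by the smooth scalar parameter. Smoothness of $i$ is trivial, since for any plot $p:U\to X$ the composite $i\circ p$ already has the required form with $n=1$ and $r_1\equiv 1$.

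For the universal property, define $g:F(X)\to V$ to be the unique $\R$-linear extension of $f$, namely $g\bigl(\sum_k c_k e_{x_k}\bigr)=\sum_k c_k f(x_k)$; uniqueness is forced by linearity because $i(X)$ spans $F(X)$. For smoothness, let $q:U\to F(X)$ be a plot with local decomposition as above; then locally $g\circ q = \sum_k r_k\cdot(f\circ p_k)$, which is a plot of $V$ because $f\circ p_k$ is a plot of $V$ and the vector space operations of $V$ are smooth. The main obstacle is this closure-under-operations check: one must handle compatibility of the local decompositions when adding two plots whose decomposition neighborhoods differ, and ensure the resulting presentations still fit the prescribed form — a book-keeping exercise, but the only place where the construction could plausibly fail.
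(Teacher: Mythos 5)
Your construction is correct and is essentially the same as that of the cited source \cite[Proposition~3.5]{Wu} (the paper itself only quotes this result without proof): the free vector space on the underlying set of $X$, equipped with the diffeology whose plots locally have the form $\sum_k r_k\, i(p_k)$ with $r_k$ smooth and $p_k$ plots of $X$, from which the universal property follows exactly as you argue. One trivial adjustment: for the covering axiom, a constant map whose value is a general element $\sum_{k=1}^n c_k e_{x_k}$ needs $n$ constant terms in the decomposition rather than $n=1$.
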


We call $F(X)$ the \dfn{free diffeological vector space generated by $X$}.

\begin{ex}
Not every free diffeological vector space is projective.
For example, it is shown in~\cite[Example~6.9]{Wu} that $F(T^2_\theta)$
is not projective, where $T^2_\theta$ is the irrational torus from
Example~\ref{ex:irrational-torus}.
\end{ex}

Some necessary conditions for a free diffeological vector space to be projective have been found 
in~\cite[Corollary~3.17]{CW2}.

\begin{cor}
Let $X$ be a diffeological space such that $F(X)$ is not projective. Then there exists a non-trivial diffeological 
principal $W$-bundle over $X$, where $W$ is a diffeological vector space.
\end{cor}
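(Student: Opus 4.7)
The plan is to pull back a suitable principal bundle from $F(X)$ to $X$ along the canonical smooth map $i:X\to F(X)$, and to use the universal property of $F(X)$ to show that the pullback is still non-trivial.

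More precisely, first I would invoke the preceding proposition to choose a linear subduction $p:P\to F(X)$ with $P$ projective, and set $W:=\ker p$, equipped with the sub-diffeology. By Proposition~\ref{prop:G->G/H-principal-bundle} (together with the identification $P/W\cong F(X)$ as in the discussion preceding Example~\ref{ex:Borel}), $p:P\to F(X)$ is a principal $W$-bundle. Since $F(X)$ is assumed not projective and projectives are closed under summands, the short exact sequence $0\to W\to P\to F(X)\to 0$ does not split smoothly, so by~\cite[Theorem~3.16]{Wu} (or equivalently by Remark~\ref{rem:trivial-pb}) the bundle $p$ is non-trivial. Now form the pullback $E:=i^*(P)\to X$, which is a principal $W$-bundle by Proposition~\ref{prop:pullback}.

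It remains to show that $i^*(p)$ is non-trivial, and this is the step I expect to be the main obstacle, since a priori one might lose non-triviality under pullback. By Remark~\ref{rem:trivial-pb}, it suffices to show that $i^*(p)$ has no smooth global section. Suppose for contradiction that $s:X\to E$ is such a section; composing with the canonical map $E\to P$ gives a smooth map $\sigma:X\to P$ with $p\circ \sigma = i$. By the universal property of $F(X)$, since $P$ is a diffeological vector space, $\sigma$ factors uniquely as $\sigma = \tilde\sigma\circ i$ for some smooth linear map $\tilde\sigma:F(X)\to P$. Then $p\circ\tilde\sigma$ and $\mathrm{id}_{F(X)}$ are two smooth linear maps $F(X)\to F(X)$ whose precomposition with $i$ both equal $i$, so by the uniqueness clause of the universal property they coincide. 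Hence $\tilde\sigma$ is a smooth linear section of $p$, forcing the original sequence to split and $F(X)$ to be projective, a contradiction.

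This contradiction shows that $E\to X$ is a non-trivial diffeological principal $W$-bundle, completing the proof. No delicate analysis is needed beyond the universal property of $F(X)$ and the elementary fact that pulling back a section of a bundle along a map $i$ yields a lift of $i$.
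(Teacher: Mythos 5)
Your proposal is correct and follows essentially the same route as the paper: pull back the non-trivial principal $W$-bundle over $F(X)$ (coming from a projective cover, via Theorem~\ref{thm:non-projective}) along $i:X\to F(X)$, and use the universal property of $F(X)$ together with Remark~\ref{rem:trivial-pb} to show a section over $X$ would yield a (linear) section over $F(X)$, a contradiction. The only cosmetic difference is that you conclude via a smooth linear splitting contradicting non-projectivity, whereas the paper concludes via triviality of the bundle over $F(X)$; these are equivalent endpoints of the same argument.
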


\begin{proof}
By Theorem~\ref{thm:non-projective}, there is a non-trivial diffeological principal $W$-bundle
$\pi:P \to F(X)$ with $W$ a diffeological vector space.
Consider its pullback $p : E \to X$ along $i: X \to F(X)$.
We claim that $p$ is not trivial.
Suppose it is.
Then $p$ has a smooth section, which implies 
that there exists a smooth map $f:X \to P$ such that $i = \pi \circ f$.
The universal property of $F(X)$ then implies that $\pi$ has a smooth section over $F(X)$.
By Remark~\ref{rem:trivial-pb}, we deduce that $\pi$ is trivial, a contradiction.
\end{proof}

\begin{ex}
If $X$ is an indiscrete diffeological space with more than one point,
then $X$ is smoothly contractible and $F(X)$ is not projective (\cite[Corollary~3.17]{CW2}).
So there exists a non-trivial diffeological principal bundle $\pi : E \to X$.
\end{ex}

The above examples are diffeological principal bundles which may not be locally trivial,
and thus have no direct analog in topology.
We now show that even locally trivial diffeological principal bundles do not have
a classifying space.

\begin{ex}\label{ex:Goodwillie}
The following is adapted from a topological example~\cite{M1,M2}.
Consider the diffeological space $B := (\R \times \{0,1\})/{\sim}$, where $(x,0) \sim (x,1)$ if 
$x \in \R^{>0}$. Write $r:\R \times \{0,1\} \to B$ for the quotient map and let $U_i := r(\R \times \{i\})$ for $i=0,1$. 
Then each $U_i$ is $D$-open in $B$ and canonically diffeomorphic to $\R$, 
and $U_0 \cap U_1$ is canonically diffeomorphic to $\R^{>0}$. Define $E$ to be the pushout of 
\[
\xymatrix{U_1 \times \R^{>0} & (U_0 \cap U_1) \times \R^{>0}\ \ar[l] \ar@{^{(}->}[r] & U_0 \times \R^{>0},}
\]
where the first map is given by $(r(x,i),g) \mapsto (r(x,1),xg)$.
The projections $U_i \times \R^{>0} \to U_i \hookrightarrow B$ induce a smooth map $p:E \to B$
which is a locally trivial principal $\R^{>0}$-bundle. 
Here we are regarding $\R^{>0}$ as a diffeological group under multiplication.
Consider the smooth map ${(\R \times \{0,1\}) \times \R} \to \R \times \{0,1\}$ defined by 
$(x,i,t) \mapsto (\rho(t)+(1-\rho(t))x,\, i)$ for $i=0,1$, where $\rho:\R \to \R$ is a smooth function 
with $\rho(0)=0$, $\rho(1)=1$ and $\im(\rho)=[0,1]$.
This induces a smooth homotopy $B \times \R \to B$ between the identity map and a constant map,
which shows that $B$ is smoothly contractible. 
Now if $p:E \to B$ were trivial, we would have an $\R^{>0}$-equivariant trivialization
$E \cong B \times \R^{>0}$ over $B$.  Restricting to each $U_i \times \R^{>0}$ would give maps
$U_i \times \R^{>0} \to B \times \R^{>0}$ sending $(r(x,i),g)$ to $(r(x,i),\alpha_i(x) g)$
for some smooth functions $\alpha_i : \R \to \R^{>0}$.
Since these restrictions must agree on $(U_0 \cap U_1) \times \R^{>0}$,
we must have that $\alpha_0(x) = \alpha_1(x) x$ for $x > 0$.
But then the identity map $\R^{>0} \to \R^{>0}$ would have a smooth extension $\R \to \R^{>0}$
sending $x$ to $\alpha_0(x)/\alpha_1(x)$,
which is impossible by continuity.
\end{ex}

\section{Partitions of unity and $D$-numerable bundles}
\label{se:D-numerable}

In the previous section, we saw that in general there is no classifying space for all principal $G$-bundles.
Because of this, we restrict our attention to a special class of principal bundles called $D$-numerable principal bundles. 
In the next section, we will show that there is a classifying space for $D$-numerable principal bundles over arbitrary diffeological spaces.

\subsection{Partitions of unity}

We first recall the concept of smooth partition of unity in the framework of diffeology:

\begin{de}
 Let $X$ be a diffeological space.

 A collection $\{ U_i \}_{i \in I}$ of subsets of $X$ is \dfn{locally finite} if every point in $X$
 has a $D$-open neighbourhood that intersects $U_i$ for only finitely many $i$.
 Note that $\{ U_i \}$ is locally finite if and only if
 the collection $\{ \bar{U_i} \}$ of $D$-closures is locally finite.

 A family of smooth functions $\{\mu_i:X \to \R\}_{i \in I}$ is a 
 \dfn{smooth partition of unity} if it satisfies the following conditions:
 \begin{enumerate}
  \item $0 \leq \mu_i(x)$ for each $i \in I$ and $x \in X$;
  
  \item $\{\supp(\mu_i)\}_{i \in I}$ is locally finite;
   
  \item the sum $\sum_{i \in I} \mu_i(x)$, which makes sense because of (2), is equal to $1$ for all $x \in X$.
 \end{enumerate}
 Here $\supp(\mu_i)$ is the closure of $\mu_i^{-1}((0, \infty))$ in the $D$-topology,
 and (2) is equivalent to requiring that $\{ \mu_i^{-1}((0, \infty)) \}$ is locally finite.

 If $\mathfrak{X}=\{X_i\}_{i \in I}$ is a collection of subsets of $X$ indexed by the
 same indexing set, we say that our partition of unity is \dfn{subordinate to $\mathfrak{X}$} if
 $\supp(\mu_i) \subseteq X_i$ for each $i \in I$.
\end{de}

If instead of (3), we have that $\sum_{i \in I} \mu_i(x)$ is nonzero for each $x \in X$,
or equivalently that the sets $\mu_i^{-1}((0, \infty))$ form a cover of $X$, then one
can scale the functions to obtain a smooth partition of unity.

The following is a smooth version of a result that can be found in~\cite[Section 4]{B}.
It tells us how to adjust a partition of unity to reduce the supports,
allowing us to fill some minor gaps in the arguments of~\cite{Hu}.

\begin{lem}\label{lem:Bourbaki}
 Let $X$ be a diffeological space.
 If $\{\rho_i:X \to \R\}_{i \in I}$ is a smooth partition of unity,
 then there is a smooth partition of unity $\{ \mu_{i}:X \to \R \}_{i \in I}$ subordinate to $\{\rho_i^{-1}((0,\infty))\}_{i \in I}$.
\end{lem}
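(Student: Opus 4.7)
The plan is to shrink the supports of the $\rho_i$ by first subtracting a small smooth positive function from each $\rho_i$ and then composing with a one-sided smooth cutoff. The key point is to produce a single smooth function $R : X \to (0, \infty)$ with $R(x) < \max_i \rho_i(x)$ everywhere. I will take
\[
R(x) \;:=\; \tfrac{1}{2}\sum_{i \in I} \rho_i(x)^2.
\]
Local finiteness of $\{\supp(\rho_i)\}$ makes this sum locally a finite sum of smooth functions, hence $R$ is smooth, and the estimate
\[
0 \;<\; \sum_i \rho_i(x)^2 \;\leq\; \bigl(\max_j \rho_j(x)\bigr)\sum_i \rho_i(x) \;=\; \max_j \rho_j(x),
\]
which uses $\sum_i \rho_i = 1$, gives both $R(x) > 0$ and $R(x) < \max_j \rho_j(x)$ for every $x \in X$.

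Next I fix a smooth $\phi : \R \to [0, \infty)$ with $\phi(t) = 0$ for $t \leq 0$ and $\phi(t) > 0$ for $t > 0$ (for example, $\phi(t) = e^{-1/t}$ for $t > 0$), and set $\nu_i := \phi \circ (\rho_i - R)$. Each $\nu_i$ is smooth and nonnegative, with $\nu_i(x) > 0$ iff $\rho_i(x) > R(x)$. To see that $\supp(\nu_i) \subseteq \rho_i^{-1}((0,\infty))$, I will use that smooth functions are $D$-continuous, so $\{x : \rho_i(x) \geq R(x)\}$, as the preimage of $[0,\infty)$ under $\rho_i - R$, is $D$-closed; therefore
\[
\supp(\nu_i) \;=\; \overline{\{\rho_i > R\}} \;\subseteq\; \{\rho_i \geq R\} \;\subseteq\; \{\rho_i > 0\} \;=\; \rho_i^{-1}((0,\infty)),
\]
where the second inclusion uses $R > 0$ everywhere.

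Since $\supp(\nu_i) \subseteq \supp(\rho_i)$, the family $\{\supp(\nu_i)\}$ inherits local finiteness from $\{\supp(\rho_i)\}$. By the construction of $R$, for each $x$ there is some $i$ with $\rho_i(x) > R(x)$, so $\nu_i(x) > 0$, which makes $\sum_j \nu_j$ strictly positive everywhere; as a locally finite sum of smooth functions, it is smooth. Normalizing, $\mu_i := \nu_i / \sum_j \nu_j$ then yields a smooth partition of unity indexed by $I$ with $\supp(\mu_i) = \supp(\nu_i) \subseteq \rho_i^{-1}((0,\infty))$, as required.

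The main obstacle I anticipate is producing the shrinking function $R$ smoothly: in the continuous setting one would simply use $R(x) = \tfrac{1}{2}\max_i \rho_i(x)$, but the max is not smooth. The substitution $\max_i \rho_i \leadsto \sum_i \rho_i^2$ is tailored to the identity $\sum_i \rho_i = 1$, which sandwiches $\sum_i \rho_i^2$ strictly below $\max_i \rho_i$ while keeping it strictly positive. The only other subtle point is verifying that the $D$-topological closure of $\{\rho_i > R\}$ stays inside the open set $\{\rho_i > 0\}$, which is exactly what the $D$-closed intermediate set $\{\rho_i \geq R\}$ and the strict positivity of $R$ accomplish.
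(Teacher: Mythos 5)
Your proof is correct and follows essentially the same route as the paper: the shrinking function $R = \tfrac12\sum_i\rho_i^2$ is exactly the paper's $\sigma(x)/2$, the cutoff $\phi(\rho_i - R)$ matches its $\mu_i$, and the positivity and support arguments coincide (your use of the $D$-closed set $\{\rho_i \geq R\}$ is just a cleaner phrasing of the paper's pointwise neighbourhood argument). No gaps.
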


\begin{proof}
Define $\sigma : X \to \R$ by $\sigma(x) = \sum_i \rho_i(x)^2$.
Note that $\sigma$ is smooth, nowhere zero and
\[
  \sigma(x) \leq (\sup_i \rho_i(x)) \sum_i \rho_i(x) = \sup_i \rho_i(x)
\]
for each $x$.
Let $\phi$ be a smooth function such that $\phi(t) = 0$ for $t \leq 0$ and $\phi(t) > 0$ for $t > 0$,
and define a smooth function $\mu_i : X \to \R$ by $\mu_i(x) = \phi(\rho_i(x) - \sigma(x)/2)$ for each $i$. 

We will show that $\supp(\mu_i) \subseteq \rho_i^{-1}((0,\infty))$, which then implies that $\{\supp(\mu_i)\}_{i \in I}$ is 
locally finite.
Suppose that $\rho_i(y) = 0$.
Then there is a $D$-open neighbourhood $V$ of $y$ such that $\rho_i(x) - \sigma(x)/2 < 0$
for $x$ in $V$.  That is, $\mu_i(x) = 0$ for each $x$ in $V$.
Therefore, $y \not\in \supp(\mu_i)$, as required.

Since $\{ \supp(\mu_i) \}$ is locally finite, $\sum_i \mu_i(x)$ is well-defined.
Note that for each $x$ there is a $j$ such that 
$\rho_j(x) = \sup_i \rho_i(x) \geq \sigma(x) > \sigma(x)/2$.
For this $j$, $\mu_j(x) \neq 0$, and so $\sum_i \mu_i(x)$ is nowhere zero. 
Therefore the functions
$\mu_i$ can be scaled to form a smooth partition of unity subordinate to $\{\rho_i^{-1}((0,\infty))\}_{i \in I}$.
\end{proof}

Our next lemma shows that one can replace any partition of unity with a 
related countable one.

\begin{lem}\label{lem:countable}
Let $B$ be a diffeological space and
let $\{\rho_i:B \to \R\}_{i \in I}$ be a smooth partition of unity.
Then there exists a countable smooth partition of unity $\{\tau_n:B \to \R\}_{n \in \N}$
such that each $\tau_n^{-1}((0,\infty))$ is a disjoint union of $D$-open sets each of which
is contained in $\rho_i^{-1}((0,\infty))$ for some $i \in I$.
\end{lem}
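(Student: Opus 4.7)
The plan is to index the new partition by the cardinality of the ``dominant'' subset of $I$ at each point of $B$. For each finite nonempty $F \subseteq I$, define
\[
  A_F = \{x \in B : \rho_i(x) > \textstyle\sum_{j \notin F} \rho_j(x) \text{ for each } i \in F\}.
\]
Because $\sum_j \rho_j \equiv 1$, this is the same as demanding $\rho_i(x) + \sum_{i' \in F}\rho_{i'}(x) > 1$ for each $i \in F$, a condition involving only a finite sum over $F$. I would first verify the combinatorial properties: (i) each $A_F$ is $D$-open (a finite intersection of level sets of smooth functions) and contained in $\rho_i^{-1}((0,\infty))$ for every $i \in F$; (ii) for fixed cardinality $|F| = n$, the sets $A_F$ are pairwise disjoint, since a point in both $A_F$ and $A_{F'}$ with $i \in F \setminus F'$ and $i' \in F' \setminus F$ would force $\rho_i(x) > \rho_{i'}(x) > \rho_i(x)$; and (iii) every $x \in B$ lies in $A_{S(x)}$ where $S(x) := \{i : \rho_i(x) > 0\}$, which is finite by local finiteness of $\{\rho_i\}$.

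Next, I would realise each $A_F$ as the strict positivity locus of an explicit smooth function. Pick a smooth $\phi : \R \to \R_{\geq 0}$ with $\phi(t) = 0$ for $t \leq 0$ and $\phi(t) > 0$ for $t > 0$, as in the proof of Lemma~\ref{lem:Bourbaki}, and set
\[
  \sigma_F(x) = \prod_{i \in F} \phi\bigl(\rho_i(x) + \textstyle\sum_{i' \in F} \rho_{i'}(x) - 1\bigr),
\]
so that $\sigma_F(x) > 0$ iff $x \in A_F$. Then define $\tau_n(x) := \sum_{|F| = n} \sigma_F(x)$ for each $n \geq 1$. Granted that $\tau_n$ is smooth and that $\{\mathrm{supp}(\tau_n)\}$ is locally finite, property (iii) forces $\sum_n \tau_n$ to be everywhere positive, and normalising by this sum produces the desired countable smooth partition of unity: by construction $\tau_n^{-1}((0,\infty)) = \bigsqcup_{|F| = n} A_F$ is a disjoint union of $D$-open sets each contained in some $\rho_i^{-1}((0,\infty))$.

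The main technical hurdle is controlling the a priori uncountable sum defining $\tau_n$, and the key trick is that $\sigma_F$ was engineered to self-destruct for ``wrong'' $F$. Given $x_0 \in B$, pick a $D$-open neighbourhood $V \ni x_0$ on which only the finitely many $\rho_i$ indexed by some finite $S \subseteq I$ are not identically zero. If $F \not\subseteq S$, choose $i_0 \in F \setminus S$; then $\rho_{i_0} \equiv 0$ on $V$, and since $\sum_{i' \in F}\rho_{i'} \leq 1$ everywhere, the corresponding factor $\phi(\rho_{i_0} + \sum_{i' \in F}\rho_{i'} - 1)$ is $\phi$ of a non-positive number and vanishes on $V$. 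Hence on $V$ the sum defining $\tau_n$ collapses to the finite sum over $\{F \subseteq S : |F| = n\}$, so $\tau_n|_V$ is a finite sum of smooth functions; simultaneously $\tau_n|_V \equiv 0$ once $n > |S|$, which gives the required local finiteness of supports and completes the construction.
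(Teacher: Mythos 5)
Your proof is correct and takes essentially the same route as the paper's: your $\sigma_F$ is literally the paper's $\sigma_J$ (since $\sum_{j \notin F} \rho_j = 1 - \sum_{i' \in F} \rho_{i'}$), and the grouping by cardinality, the disjointness argument, the coverage via $S(x)$, and the final normalization all match. The only difference is that you spell out explicitly the well-definedness, smoothness and local-finiteness checks that the paper dismisses as straightforward.
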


\begin{proof}
Fix a smooth function 
$\phi:\R \to \R$ with $\phi(t)=0$ if $t \leq 0$ and $\phi(t) > 0$ if $t > 0$.
For any non-empty finite subset $J$ of the indexing set $I$, define $\sigma_J:B \to \R$ by 
$\sigma_J(b) = \prod_{j \in J} \phi(\rho_j(b) - \sum_{k \in I \setminus J} \rho_k(b))$. 
By local finiteness of $\{\supp(\rho_i)\}_{i \in I}$, $\sigma_J$ is well-defined 
and smooth. Write $B_J := \sigma_J^{-1}((0,\infty))$.
Since each $b \in B$ is in $B_J$, where $J = \{ j \in I \mid \rho_j(b) \neq 0 \}$,
we have that $\cup_J \, B_J = B$.  Moreover, each $B_J \subseteq \rho_j^{-1}((0,\infty))$ for 
any $j \in J$.

Write $|J|$ for the cardinality of the set $J$. Then for any $J \neq J'$ with $|J| = |J'|$, 
we have $B_J \cap B_{J'} = \emptyset$. Otherwise, let $b \in B_J \cap B_{J'}$,
and choose $j \in J \setminus J'$ and $j' \in J' \setminus J$.
Since $b \in B_J$, we have that $\rho_j(b) - \sum_{k \in I \setminus J} \rho_k(b) > 0$, 
which implies that $\rho_j(b) > \rho_{j'}(b)$.
But $b \in B_{J'}$ implies that $\rho_j(b) < \rho_{j'}(b)$, a contradiction.

For $n \in \N^{>0}$, define $\tau_n:B \to \R$ by $\tau_n(b) = \sum_{J \subseteq I, |J|=n} \sigma_J(b)$. 
Then $B_n := \tau_n^{-1}((0,\infty)) = \cup_{J \subseteq I, |J|=n} \, B_J$ is a disjoint union
of sets $B_J$ each of which is contained in some $\rho_j^{-1}((0,\infty))$.
(Also define $\tau_0$ to be the zero function, with $B_0 = \emptyset$.)
By local finiteness of $\{\rho_i^{-1}(0,\infty)\}_{i \in I}$, one sees that $\{B_n\}_{n \in \N}$ is locally finite,
and therefore that $\{ \supp(\tau_n) \}_{n \in \N}$ is locally finite.
The result then follows by normalizing the $\tau_n$'s.
\end{proof}

\begin{samepage}
\subsection{$D$-numerable diffeological bundles}

\begin{de}
 Let $F$ be a diffeological space. A smooth map $\pi:E \to B$ is called a \dfn{$D$-numerable diffeological bundle of fiber type $F$} if there 
 exists a smooth partition of unity $\{\mu_i:B \to \R\}_{i \in I}$ subordinate to a $D$-open cover $\{B_i\}_{i \in I}$ of $B$ such that each 
 $\pi|_{B_i}$ is trivial of fiber type $F$.
\end{de}
\end{samepage}

Clearly, 
\[
\text{trivial} \implies \text{$D$-numerable} \implies \text{locally trivial} \implies \text{diffeological bundle}.
\]
By Lemmas~\ref{lem:Bourbaki} and~\ref{lem:countable}, our definition of $D$-numerable agrees with that of~\cite{MW}.

\begin{ex}
If $B$ is a manifold, then the following concepts (over $B$) coincide:
\begin{enumerate}
\item $D$-numerable diffeological bundle;
\item locally trivial bundle;
\item diffeological bundle.
\end{enumerate}
\end{ex}

\begin{ex}
If a diffeological space $B$ has indiscrete $D$-topology, then the only $D$-numerable diffeological bundle over $B$ is the trivial 
bundle. In particular, the only $D$-numerable diffeological bundle over an irrational torus or an indiscrete diffeological space is trivial.
\end{ex}

\begin{lem}\label{lem:pullback-D-numerable}
The pullback of a $D$-numerable diffeological bundle of fiber type $F$ is again $D$-numerable of fiber type $F$.
\end{lem}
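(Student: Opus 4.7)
The plan is to pull back both the trivializing cover and the partition of unity along $f$. Let $\pi : E \to B$ be a $D$-numerable diffeological bundle of fiber type $F$ with trivializing $D$-open cover $\{B_i\}_{i \in I}$ and subordinate smooth partition of unity $\{\mu_i : B \to \R\}_{i \in I}$, and let $f : B' \to B$ be a smooth map. I would define $B_i' := f^{-1}(B_i)$ and $\mu_i' := \mu_i \circ f : B' \to \R$, and show that these data exhibit $f^*(\pi) : f^*(E) \to B'$ as $D$-numerable of fiber type $F$.

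First I would verify that $\{B_i'\}$ is a $D$-open cover: smoothness of $f$ implies $D$-continuity (so each $B_i'$ is $D$-open), and surjectivity of the $B_i$ onto $B$ forces $\bigcup_i B_i' = B'$. Next I would check the three axioms of a smooth partition of unity for $\{\mu_i'\}$. Nonnegativity and smoothness are immediate from the corresponding properties of $\mu_i$ and $f$. The sum condition follows pointwise: $\sum_i \mu_i'(b') = \sum_i \mu_i(f(b')) = 1$. Once local finiteness of $\{\supp(\mu_i')\}$ is established, this sum makes sense termwise.

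The one substantive point is local finiteness together with subordination, and this is where I would be careful. Since $(\mu_i')^{-1}((0,\infty)) = f^{-1}(\mu_i^{-1}((0,\infty)))$ and $f^{-1}$ of a $D$-closed set is $D$-closed, I get $\supp(\mu_i') \subseteq f^{-1}(\supp(\mu_i)) \subseteq f^{-1}(B_i) = B_i'$, which gives subordination. For local finiteness, given $b' \in B'$, pick a $D$-open neighbourhood $V$ of $f(b')$ in $B$ meeting $\supp(\mu_i)$ for only finitely many $i$; then $f^{-1}(V)$ is a $D$-open neighbourhood of $b'$, and if $f^{-1}(V) \cap \supp(\mu_i') \neq \emptyset$ then $V \cap \supp(\mu_i) \neq \emptyset$, so only finitely many $i$ contribute.

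Finally I would verify the local triviality of $f^*(\pi)$ over each $B_i'$. The restriction $f^*(\pi)|_{B_i'}$ is naturally identified with the pullback of $\pi|_{B_i}$ along $f|_{B_i'} : B_i' \to B_i$, and Proposition~\ref{prop:pullback} tells us that pulling back preserves triviality; hence $f^*(\pi)|_{B_i'}$ is trivial of fiber type $F$. Combining these ingredients shows that $f^*(\pi)$ is a $D$-numerable diffeological bundle of fiber type $F$. The only mild obstacle is the bookkeeping around local finiteness, which rests on the elementary but essential fact that a smooth map of diffeological spaces is $D$-continuous.
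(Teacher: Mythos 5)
Your proof is correct and is exactly the routine verification the paper leaves implicit, as its proof of this lemma is simply ``This is straightforward.'' Pulling back the cover and partition of unity along $f$, using $D$-continuity of smooth maps for openness, closedness, and local finiteness, and invoking Proposition~\ref{prop:pullback} for triviality over each $B_i' = f^{-1}(B_i)$ is precisely the intended argument.
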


\begin{proof}
 This is straightforward.
\end{proof}

We now show that one can assume that the indexing set is countable.

\begin{prop}\label{prop:countable-diff}
Let $\pi:E \to B$ be a $D$-numerable diffeological bundle.
Then there exists a countable smooth partition of unity $\{\mu_n:B \to \R\}_{n \in \N}$ subordinate to 
a locally finite $D$-open cover $\{B_n\}_{n \in \N}$ of $B$ such that 
$\pi|_{B_n}:\pi^{-1}(B_n) \to B_n$ is trivial for each $n$.
\end{prop}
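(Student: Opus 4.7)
The plan is to combine the two preceding lemmas. Start with the given data: a smooth partition of unity $\{\rho_i\}_{i \in I}$ subordinate to a $D$-open cover $\{B_i\}_{i \in I}$ of $B$ with $\pi$ trivial over each $B_i$. First, apply Lemma~\ref{lem:countable} to $\{\rho_i\}$ to produce a countable smooth partition of unity $\{\tau_n\}_{n \in \N}$ such that each $V_n := \tau_n^{-1}((0,\infty))$ is a disjoint union of $D$-open sets each contained in some $\rho_i^{-1}((0,\infty)) \subseteq B_i$.

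Next, apply Lemma~\ref{lem:Bourbaki} to $\{\tau_n\}$ to obtain a smooth partition of unity $\{\mu_n\}_{n \in \N}$ subordinate to $\{V_n\}_{n \in \N}$, so that $\supp(\mu_n) \subseteq V_n$ for every $n$. Set $B_n := V_n$. Then $\{B_n\}$ is a $D$-open cover of $B$: it covers because $\mu_n^{-1}((0,\infty)) \subseteq \supp(\mu_n) \subseteq B_n$ and the $\mu_n$ sum to $1$. It is locally finite because $\{\supp(\tau_n)\}$ is locally finite and $V_n \subseteq \supp(\tau_n)$.

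It remains to show $\pi|_{B_n} : \pi^{-1}(B_n) \to B_n$ is trivial for each $n$. By the output of Lemma~\ref{lem:countable}, $B_n$ decomposes as a disjoint union of $D$-open subsets, each sitting inside some $B_i$ on which $\pi$ is trivial. By Proposition~\ref{prop:pullback}, the restriction of $\pi$ to each such piece is trivial (as pullback preserves triviality). Then Lemma~\ref{lem:disjoint-union} assembles these local trivializations into a single trivialization of $\pi|_{B_n}$.

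The main (and essentially only) obstacle is the mismatch between the open set $V_n = \tau_n^{-1}((0,\infty))$ and the closed set $\supp(\tau_n)$, which is why a direct use of Lemma~\ref{lem:countable} alone is not enough: the resulting $\{\tau_n\}$ is not subordinate to $\{V_n\}$ in the strict sense (support containment). Inserting Lemma~\ref{lem:Bourbaki} to shrink the supports into the open sets $V_n$ fixes this cleanly without destroying the disjoint-union structure, which is what lets Lemma~\ref{lem:disjoint-union} conclude triviality over each $B_n$.
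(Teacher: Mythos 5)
Your proof is correct and follows essentially the same route as the paper: apply Lemma~\ref{lem:countable} to get a countable partition of unity whose positive sets $B_n$ decompose into disjoint $D$-open pieces inside trivializing sets, use Lemma~\ref{lem:disjoint-union} to conclude triviality over each $B_n$, and then shrink supports with Lemma~\ref{lem:Bourbaki} to get a partition of unity genuinely subordinate to $\{B_n\}$. The only differences are cosmetic (order of the last two steps and the explicit citation of Proposition~\ref{prop:pullback} for restriction preserving triviality).
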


\begin{proof}
Let $\{\rho_i:B \to \R\}_{i \in I}$ be a smooth partition of unity subordinate to a 
$D$-open cover $\{U_i\}_{i \in I}$ of $B$ such that each 
$\pi|_{U_i}$ is a trivial diffeological bundle.
By Lemma~\ref{lem:countable}, there is a countable smooth partition of unity
$\{ \tau_n : B \to \R \}_{n \in \N}$ such that each $B_n := \tau_n^{-1}((0, \infty))$ is a disjoint
union of $D$-open sets each of which is contained in $\rho_i^{-1}((0, \infty))$ for some $i$.
It follows from Lemma~\ref{lem:disjoint-union} that $\pi|_{B_n} : \pi^{-1}(B_n) \to B_n$ is trivial for each $n$.
By Lemma~\ref{lem:Bourbaki}, we can find another countable smooth partition of unity
$\{ \mu_n : B \to \R \}_{n \in \N}$ subordinate to $\{ B_n \}_{n \in \N}$,
which completes the argument.
\end{proof}

\subsection{$D$-numerable principal bundles}

\begin{de}
 Let $G$ be a diffeological group. A principal $G$-bundle $\pi:E \to B$ is \dfn{$D$-numerable} if there 
 exists a smooth partition of unity $\{\mu_i:B \to \R\}_{i \in I}$ subordinate to a $D$-open cover $\{B_i\}_{i \in I}$ of $B$ such that each 
 $\pi|_{B_i}$ is a trivial principal $G$-bundle.
\end{de}

By Remark~\ref{rem:trivial-pb}, it is equivalent to require that $\pi$ is
$D$-numerable as a diffeological bundle.

Just as for diffeological bundles, we can assume that the indexing set is countable.
This will be used in the proof of Proposition~\ref{prop:theta-surjective}.

\begin{prop}\label{prop:countable}
Let $\pi:E \to B$ be a $D$-numerable principal $G$-bundle. 
Then there exists a countable smooth partition of unity $\{\mu_n:B \to \R\}_{n \in \N}$ subordinate to 
a locally finite $D$-open cover $\{B_n\}_{n \in \N}$ of $B$ such that 
$\pi|_{B_n}:\pi^{-1}(B_n) \to B_n$ is trivial for each $n$.
\end{prop}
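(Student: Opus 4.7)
The plan is to mirror the proof of Proposition~\ref{prop:countable-diff}, using Remark~\ref{rem:trivial-pb} to pass between triviality as a diffeological bundle and triviality as a principal $G$-bundle.  Since the diffeological-bundle version has already been established, the only real work is to check that each triviality step preserved along the way in that proof can be upgraded to principal-bundle triviality.

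First, I would start with a smooth partition of unity $\{\rho_i : B \to \R\}_{i \in I}$ subordinate to a $D$-open cover $\{U_i\}_{i \in I}$ of $B$ such that each $\pi|_{U_i}$ is a trivial principal $G$-bundle, as provided by the $D$-numerability hypothesis.  Next, apply Lemma~\ref{lem:countable} to produce a countable smooth partition of unity $\{\tau_n : B \to \R\}_{n \in \N}$ such that each $B_n := \tau_n^{-1}((0,\infty))$ is a disjoint union of $D$-open subsets, each contained in some $\rho_i^{-1}((0,\infty)) \subseteq U_i$.  Over each such component, $\pi$ is the restriction of a trivial principal $G$-bundle and hence is itself a trivial principal $G$-bundle.

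The key step is to show that $\pi|_{B_n}$ is trivial \emph{as a principal $G$-bundle} (not merely as a diffeological bundle).  By Remark~\ref{rem:disjoint-union}, $B_n$ is the coproduct in $\Diff$ of its components, and $\pi^{-1}(B_n)$ is the coproduct of the preimages.  Applying Lemma~\ref{lem:disjoint-union} shows $\pi|_{B_n}$ is trivial as a diffeological bundle; Remark~\ref{rem:trivial-pb} then promotes this to triviality as a principal $G$-bundle.  (Equivalently, one may assemble the smooth global sections on each component into a smooth global section of $\pi|_{B_n}$ and invoke the section criterion of Remark~\ref{rem:trivial-pb} directly.)

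Finally, I would apply Lemma~\ref{lem:Bourbaki} to $\{\tau_n\}_{n \in \N}$ to obtain a countable smooth partition of unity $\{\mu_n : B \to \R\}_{n \in \N}$ with $\supp(\mu_n) \subseteq B_n$; local finiteness of $\{\supp(\mu_n)\}$ and of the cover $\{B_n\}_{n \in \N}$ follow from local finiteness of $\{\supp(\tau_n)\}$.  I do not anticipate a genuine obstacle: the entire argument is essentially a copy of the proof of Proposition~\ref{prop:countable-diff}, with the only additional ingredient being Remark~\ref{rem:trivial-pb}, which ensures the principal-bundle structure is preserved at each triviality step.
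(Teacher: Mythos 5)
Your proposal is correct and follows essentially the same route as the paper, which simply deduces the result from Proposition~\ref{prop:countable-diff} (whose proof you have unfolded) together with Remark~\ref{rem:trivial-pb} to pass between triviality as a diffeological bundle and as a principal $G$-bundle.
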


\begin{proof}
This follows from Proposition~\ref{prop:countable-diff}.
\end{proof}\pagebreak[2]%

Our next goal is to show that pulling back a $D$-numerable principal bundle along
homotopic maps gives isomorphic bundles.
While the general argument follows existing approaches from topology,
several key steps need novel proofs in order to work in the smooth setting.

\begin{lem}\label{lem:pullback-D-numerable-G}
The pullback of a $D$-numerable principal $G$-bundle is a $D$-numerable principal $G$-bundle.
\end{lem}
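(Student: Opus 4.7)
The plan is to reduce this to the diffeological-bundle version (Lemma~\ref{lem:pullback-D-numerable}) together with the principal-bundle version of Proposition~\ref{prop:pullback}. Concretely, given a $D$-numerable principal $G$-bundle $\pi : E \to B$ and a smooth map $f : B' \to B$, I would proceed as follows.

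First, by Proposition~\ref{prop:pullback}, the pullback $f^*(\pi) : f^*(E) \to B'$ is a principal $G$-bundle. Second, a $D$-numerable principal $G$-bundle is in particular a $D$-numerable diffeological bundle (the distinction, by the remark just above the lemma, is only that the trivialization over each $B_i$ is $G$-equivariant, which is automatic for a principal bundle by Remark~\ref{rem:trivial-pb}). Hence by Lemma~\ref{lem:pullback-D-numerable} the pullback $f^*(\pi)$ is $D$-numerable as a diffeological bundle. Finally, invoking the same equivalence in reverse tells us that a principal bundle which is $D$-numerable as a diffeological bundle is $D$-numerable as a principal bundle, completing the proof.

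If one instead wants to construct the partition of unity directly, the recipe is to take a smooth partition of unity $\{\mu_i : B \to \R\}$ subordinate to a trivializing $D$-open cover $\{B_i\}$ for $\pi$, and consider $\{\mu_i \circ f : B' \to \R\}$ subordinate to $\{f^{-1}(B_i)\}$. Smoothness, non-negativity and summing to one are immediate. The subordination and local finiteness conditions follow from the fact that $f$ is continuous in the $D$-topology, so $\supp(\mu_i \circ f) \subseteq f^{-1}(\supp(\mu_i))$, and given $x' \in B'$ one can pull back a $D$-open neighbourhood of $f(x')$ meeting only finitely many $\supp(\mu_i)$. Triviality of $f^*(\pi)$ over each $f^{-1}(B_i)$ is preserved by Proposition~\ref{prop:pullback}.

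Either route is routine; the only possible subtlety is remembering that smooth maps are $D$-continuous, so preimages of $D$-open sets are $D$-open and preimages of $D$-closed sets are $D$-closed. I do not expect any serious obstacle, which is consistent with the fact that the authors prove the analogous Lemma~\ref{lem:pullback-D-numerable} with the single line "This is straightforward."
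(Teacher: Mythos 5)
Your proposal is correct, and since the paper's own proof is just ``This is straightforward,'' your second, direct argument (pulling back the partition of unity $\{\mu_i \circ f\}$ subordinate to $\{f^{-1}(B_i)\}$, using $D$-continuity of $f$ for openness, subordination and local finiteness, and Proposition~\ref{prop:pullback} for triviality over each $f^{-1}(B_i)$) is exactly the routine verification the authors leave implicit. The alternative reduction via Lemma~\ref{lem:pullback-D-numerable} and the remark that $D$-numerability as a principal bundle agrees with $D$-numerability as a diffeological bundle is also valid, since the pullback is already known to be a principal $G$-bundle.
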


\begin{proof}
 This is straightforward.
\end{proof}

\begin{prop}\label{prop:Dpb-on-product}
 For every $D$-numerable principal $G$-bundle $\pi:E \to B \times \R$, there exists a $D$-open cover 
 $\{B_k\}_{k \in K}$ of $B$ together with a smooth partition of unity subordinate to it such that 
 $\pi|_{B_k \times [0,1]}:\pi^{-1}(B_k \times [0,1]) \to B_k \times [0,1]$ is trivial for each $k \in K$.
\end{prop}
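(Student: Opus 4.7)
The plan is to imitate the topological argument from~\cite{Hu} with Proposition~\ref{prop:functional-testing-zero} playing the role of a smooth substitute for $f\mapsto\min_{x\in[0,1]}f(x)$; this is the main conceptual gap between the two settings. I would begin by applying Proposition~\ref{prop:countable} to reduce to a countable smooth partition of unity $\{\lambda_n\}_{n\in\N}$ on $B\times\R$, subordinate to a locally finite $D$-open cover $\{U_n\}_{n\in\N}$ with each $\pi|_{U_n}$ trivial. The countability ensures that the final cover of $B$ can also be taken countable.

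For each finite ordered tuple $\sigma=(n_1,\ldots,n_m)$ from $\N$ and each partition $\tau:0=t_0<t_1<\cdots<t_m=1$ of $[0,1]$ with rational endpoints, cartesian closedness of $\Diff$ makes $b\mapsto(t\mapsto\lambda_{n_j}(b,t_{j-1}+(t_j-t_{j-1})t))$ a smooth map $B\to C^\infty(\R,\R^{\geq 0})$ for each $j$; composing with $F$ from Proposition~\ref{prop:functional-testing-zero} and multiplying over $j$ yields a smooth function $\eta_{\sigma,\tau}:B\to\R^{\geq 0}$ whose positive locus $B_{\sigma,\tau}$ consists of precisely those $b$ for which $\lambda_{n_j}(b,t)>0$ on the entire interval $[t_{j-1},t_j]$ for every $j$. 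In particular $B_{\sigma,\tau}\times[t_{j-1},t_j]\subseteq\lambda_{n_j}^{-1}((0,\infty))\subseteq U_{n_j}$. The countable family $\{B_{\sigma,\tau}\}$ covers $B$: for any $b\in B$, local finiteness of $\{\supp(\lambda_n)\}$ combined with compactness of $\{b\}\times[0,1]$ (realized as the image of a smooth map from compact $[0,1]$) gives a finite set of indices whose positivity loci cover $\{b\}\times[0,1]$, and a Lebesgue-number argument then supplies a suitable rational-endpoint $\tau$ and tuple $\sigma$ with $b\in B_{\sigma,\tau}$.

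The technical heart, and the hardest step, is showing that $\pi$ is trivial over each $B_{\sigma,\tau}\times[0,1]$. Over each slab $B_{\sigma,\tau}\times[t_{j-1},t_j]\subseteq U_{n_j}$ the given trivialization yields a smooth section $s_j$, and these must be inductively spliced. Although $s_j$ and $s_{j+1}$ can be made to agree at a breakpoint $t_j$ by multiplication by a smooth $G$-valued function on $B_{\sigma,\tau}$, the $t$-derivatives will generically disagree and one cannot repair this by smooth interpolation inside $G$. The remedy is to replace each $s_j$ by $s_j\cdot f_j$, for smooth $f_j:B_{\sigma,\tau}\times[t_{j-1},t_j]\to G$ chosen so that, in a common trivializing chart defined on a $D$-open neighborhood of $B_{\sigma,\tau}\times\{t_j\}$ (obtained after mildly shrinking $B_{\sigma,\tau}$ by a further application of $F$), the modified section is constant in $t$ near $t_j$ on each slab; inductively matching these constant values across the chain of breakpoints then produces a smooth global section. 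This splicing, adapted from~\cite{Hu} with help from the correction techniques of~\cite{B}, is the principal obstacle and where the smooth argument diverges most from the topological one.

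Finally, the countable cover $\{B_{\sigma,\tau}\}$ is accompanied by smooth nonnegative functions $\eta_{\sigma,\tau}$. After weighting these with constants chosen so the sum converges in $C^\infty$, normalizing produces a smooth partition of unity on $B$ whose supports refine $\{B_{\sigma,\tau}\}$; Lemma~\ref{lem:countable}, together with Lemma~\ref{lem:disjoint-union} to preserve triviality over disjoint unions, and then Lemma~\ref{lem:Bourbaki} to achieve the subordination, thin this to the required locally finite subordinate partition of unity, completing the construction.
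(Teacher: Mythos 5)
Your overall strategy matches the paper's: apply $F$ from Proposition~\ref{prop:functional-testing-zero} to rescaled restrictions of the partition functions, take products over a chain of subintervals, and use compactness of $[0,1]$ to get a cover of $B$ by sets over which $\pi$ should be trivial on the whole cylinder. However, there are two genuine gaps. The first is exactly at what you call the technical heart. Your slabs $[t_{j-1},t_j]$ meet only in the slice $B_{\sigma,\tau}\times\{t_j\}$, and the proposed splicing does not close the seam problem: making a section ``constant in $t$ near $t_j$'' in a trivializing chart does not make it constant as a map to $E$ (it must lie over $(b,t)$), so the one-sided $t$-derivatives of the two pieces still have no reason to agree, and the glued map need not be smooth on plots crossing the seam. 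To repair this you would need both pieces expressed in a single chart on a neighbourhood of the seam of the form $B_{\sigma,\tau}\times(t_j-\delta,t_j+\delta)$; but the defining condition of $B_{\sigma,\tau}$ only guarantees $\lambda_{n_j}>0$ up to $t_j$, so no uniform collar on the far side of the seam lies in $U_{n_j}$, and the ``mild shrinking by a further application of $F$'' is doing real, unexplained work (it amounts to redefining the sets so that the subintervals overlap, after which the covering argument must be redone). The paper avoids this entirely by building the overlap into the construction from the start --- the functions $\tilde{\rho}_{k(i)}(b)(s)=\rho_{k(i)}(b,\tfrac{2s+i-3/2}{n})$ test positivity on the overlapping intervals $[\tfrac{i-3/2}{n},\tfrac{i+1/2}{n}]$ --- and then invokes the gluing lemma \cite[Lemma~1 in 8.19]{I2} for triviality over $B_k\times[0,1]$.

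The second gap is the final partition-of-unity step. Weighting the countable family $\{\eta_{\sigma,\tau}\}$ by constants ``so the sum converges in $C^\infty$'' is not available over an arbitrary diffeological space: smoothness of an infinite sum must be tested against every plot, and since there is no chart or exhaustion controlling the derivatives of the $\eta_{\sigma,\tau}$ along all plots simultaneously, no choice of constants made in advance guarantees a smooth (or even differentiable) sum. Moreover, even granting a smooth positive sum, the normalized functions would not have locally finite supports (a point $b$ typically lies in infinitely many $B_{\sigma,\tau}$, e.g.\ for all finer rational partitions), so they would not form a smooth partition of unity in the sense required, and Lemma~\ref{lem:countable} and Lemma~\ref{lem:Bourbaki} take a genuine partition of unity as input --- you cannot use them to create one here. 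The paper handles this with a separate mechanism you are missing: after checking local finiteness within each fixed tuple-length $n$, it replaces $\hat{\rho}_k$ for $k\in I^r$ by $\sigma_k(b)=\phi\bigl(\hat{\rho}_k(b)-r\,\tau_r(b)\bigr)$, where $\tau_r$ is the sum of all shorter-tuple functions; this truncation forces the supports to be locally finite across all lengths, after which scaling and Lemma~\ref{lem:Bourbaki} give the subordinate partition of unity.
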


This proof is based on the proof of~\cite[Lemma~4.9.5]{Hu} (which is essentially due to~\cite{Do}), with
the function $F$ from Proposition~\ref{prop:functional-testing-zero}
playing the role of the $\min$ function.

\begin{proof}
 Let $\{\rho_i:B \times \R \to \R\}_{i \in I}$ be a smooth partition of unity 
 such that $\pi$ is trivial over each set $\rho_i^{-1}((0, \infty))$.
 By Proposition~\ref{prop:functional-testing-zero}, there exists
 a smooth map $F:C^\infty(\R,\R^{\geq 0}) \to \R^{\geq 0}$ such that 
 $F(f)=0$ if and only if $f(s)=0$ for some $s \in [0,1]$.
 For every $n \in \Z^{>0}$ and $k=(k(1),\ldots,k(n)) \in I^n$, define 
 $\hat{\rho}_k:B \to \R$ by $b \mapsto \prod_{i=1}^n F(\tilde{\rho}_i(b))$, where
 $\tilde{\rho}_i:B \to C^\infty(\R,\R^{\geq 0})$ is defined by 
 $\tilde{\rho}_i(b)(s)=\rho_{k(i)}(b,\frac{2s+i-3/2}{n})$, using cartesian closedness of $\Diff$.
 Write $B_k := \hat{\rho}_k^{-1}((0,\infty))$, which is $D$-open in $B$ since $\hat{\rho}_k$ is smooth.
Then
 $b \in B_k$ if and only if $\{b\} \times [\frac{i-3/2}{n},\frac{i+1/2}{n}] \subseteq \rho_{k(i)}^{-1}((0, \infty))$ 
 for each $i \in \{1,2,\ldots,n\}$,
 which implies that $\pi$ is trivial on each $B_k \times (\frac{i-3/2}{n},\frac{i+1/2}{n})$.
 By~\cite[Lemma~1 in 8.19]{I2}, we see that $\pi|_{B_k \times [0,1]}:\pi^{-1}(B_k \times [0,1]) \to B_k \times [0,1]$ is
 trivial.
 
 Let $K = \cup_n I^n$ and
 write $\mathfrak{B}=\{B_k\}_{k \in K}$. Since $[0,1]$ is compact, it is easy to see that for every 
 $b \in B$, there exists $l \in K$ such that $b \in B_l$, i.e., $\mathfrak{B}$ is a $D$-open cover of $B$. 
 By~\cite[Lemma~4.1]{CSW}, the $D$-topology on $B \times \R$ coincides with the product topology.
 Fix $b \in B$ and $n \in \N$.
 For $i = 1, \ldots, n$, there exist $D$-open sets $U_i \subseteq B$ and $V_i \subseteq \R$
 such that $(b, i/n) \in U_i \times V_i$ and $U_i \times V_i$ intersects only finitely many
 of the sets $\rho_j^{-1}((0,\infty))$ for $j \in I$.
 Let $U := \cap_{i=1}^n \, U_i$, so the same properties hold for each $U \times V_i$.
 For $k \in I^n$, $b \in B_k$ implies that $(b, i/n) \in 
 \{ b \} \times [\frac{i-3/2}{n},\frac{i+1/2}{n}] \subseteq \rho_{k(i)}^{-1}((0, \infty))$ 
 for each $i \in \{1,2,\ldots,n\}$,
 and so there are only finitely many $k \in I^n$ so that $U$ intersects $B_k$.

 We next tweak the functions in order to make their supports locally finite as $n$ varies as well.
 For each $r \in \N^{>1}$, write $\tau_r$ for the sum of all $\hat{\rho}_{k'}$ with $k' \in I^n$ and $n<r$, 
 and write $\tau_0 = \tau_1 = 0$.
 Each $\tau_r : B \to \R$ is smooth, by the previous paragraph.
 Fix a smooth function $\phi:\R \to \R$ with $\phi(t)=0$ for all $t \leq 0$ and 
 $\phi(t)>0$ for all $t>0$. For $k \in I^r$, define $\sigma_k:B \to \R$ by 
 $\sigma_k(b)=\phi(\hat{\rho}_k(b) - r \tau_r(b))$.
 For fixed $b \in B$, we have a $\bar{k} \in I^{\bar{r}}$ with $\bar{r}$ minimal with respect to 
 the property that $\hat{\rho}_{\bar{k}}(b)>0$. From this, one obtains that 
 $\sigma_{\bar{k}}(b)=\phi(\hat{\rho}_{\bar{k}}(b) - \bar{r} \tau_{\bar{r}}(b)) = \phi(\hat{\rho}_{\bar{k}}(b)) > 0$. 
 On the other hand, let $m \in \N$ be such that $m > \bar{r}$ and $\hat{\rho}_{\bar{k}}(b) > 1/m$.
 Since $\hat{\rho}_{\bar{k}}:B \to \R$ is smooth, there exists a $D$-open neighborhood $V$ of $b$ such that 
 for every $x \in V$, $\hat{\rho}_{\bar{k}}(x) > 1/m$. Then for any $l \geq m$, we have $l \tau_l(x) \geq m \tau_m(x) \geq m \hat{\rho}_{\bar{k}}(x) > 1$
 for all $x \in V$,
 i.e., $\sigma_k(x)=0$ for all $k \in I^l$ and $x \in V$. 
 Therefore, $\{\sigma_k^{-1}((0,\infty))\}_{k \in K}$ is locally finite.
 
 Therefore, after scaling, the conditions in Lemma~\ref{lem:Bourbaki} hold for $\{\sigma_k\}_k$, and we get a smooth partition of unity 
 subordinate to $\{\sigma_k^{-1}((0,\infty))\}_k$. It is easy to check that $\sigma_k^{-1}((0,\infty)) \subseteq B_k$,
 so we are done.
\end{proof}

\begin{prop}
Let $\pi : E \to B \times \R$ be a $D$-numerable principal $G$-bundle.
Define $p$ to be the pullback
\[
\xymatrix{E_1 \ar[r] \ar[d]_p & E \ar[d]^\pi \\ B \ar[r]_-i & B \times \R}
\]
where $i(b) = (b,1)$.
Then there exists an isomorphism of principal $G$-bundles:
\[
\xymatrix@C5pt{\pi^{-1}(B \times [0,1]) \ar[rr]^-\alpha \ar[dr]_{\pi|_{B \times [0,1]}} && E_1 \times [0,1] \ar[dl]^{p \times 1_{[0,1]}} \\ 
& B \times [0,1].}
\] 
\end{prop}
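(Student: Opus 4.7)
The plan is to use Proposition~\ref{prop:commsq=>pullbackdiff}: if we can construct a $G$-equivariant smooth map
$H : \pi^{-1}(B \times [0,1]) \to \pi^{-1}(B \times [0,1])$
fitting into a commutative square over the base map $c : B \times [0,1] \to B \times [0,1]$, $(b,t) \mapsto (b,1)$, then $\pi^{-1}(B\times[0,1])$ is isomorphic to $c^*\bigl(\pi|_{B\times[0,1]}\bigr)$. The latter pullback is canonically isomorphic to $E_1 \times [0,1]$ over $B \times [0,1]$, and combining $H$ with the second projection of $\pi$ produces the desired $\alpha(x) = (H(x), \mathrm{pr}_{[0,1]}(\pi(x)))$.

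To produce $H$, first apply Proposition~\ref{prop:Dpb-on-product} to obtain a $D$-open cover $\{B_k\}_{k\in K}$ of $B$ and a smooth partition of unity $\{\mu_k\}$ subordinate to it, with $\pi$ trivial over each $B_k \times [0,1]$. Arguing as in Proposition~\ref{prop:countable} (via Lemmas~\ref{lem:countable} and~\ref{lem:Bourbaki}), reduce to a countable locally finite cover $\{B_n\}_{n\in\N^{>0}}$ with a subordinate smooth partition $\{\mu_n\}$ and trivializations
$\phi_n : \pi^{-1}(B_n \times [0,1]) \to B_n \times [0,1] \times G$.
Set $v_n(b) := \sum_{i=1}^n \mu_i(b)$, so each $v_n$ is smooth, non-decreasing in $n$, and $v_n(b) \to 1$ pointwise.

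Next, construct $G$-equivariant smooth maps $H_n : \pi^{-1}(B \times [0,1]) \to \pi^{-1}(B \times [0,1])$ inductively so that $H_n$ covers a smooth base map $g_n(b,t)$ which pushes the $[0,1]$-coordinate forward from $t$ to (an appropriate smooth surrogate of) $\max(t, v_n(b))$. Start with $H_0 = \mathrm{id}$. To pass from $H_{n-1}$ to $H_n$, leave the map unchanged on points whose base lies outside $B_n$ and, inside $\pi^{-1}(B_n \times [0,1])$, use $\phi_n$ to slide the fiber coordinate smoothly from $s = \mathrm{pr}_{[0,1]}(\pi(H_{n-1}(x)))$ to $\max(s, v_n(b))$. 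Outside the support of $\mu_n$, the modification is the identity, so no reference to $\phi_n$ is needed there, and equivariance is preserved because the slide happens in the $[0,1]$-coordinate of the trivialization $B_n \times [0,1] \times G$. Local finiteness of $\{B_n\}$ guarantees that around each point of $B \times [0,1]$ only finitely many $H_n$ differ from their predecessor, so the sequence stabilizes locally to a smooth $G$-equivariant limit $H$, and because $v_n(b) \to 1$, the limit covers $c$. Proposition~\ref{prop:commsq=>pullbackdiff} then finishes the argument.

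The hardest part is smoothness at the boundary of each sliding region: the naive formula $(b,s) \mapsto \max(s, v_n(b))$ is only continuous along the diagonal $s = v_n(b)$, so some care is required in choosing smooth surrogates. One workable device is to fix a smooth $\psi : \R \to [0,1]$ with $\psi(u) = 0$ for $u \leq 0$ and $\psi(u) = 1$ for $u \geq 1$ and, for $b \in B_n$, define the modification $\hat h_n$ in the trivialization $\phi_n$ by $(b,s,g) \mapsto (b,\, s + \mu_n(b)\,\psi(v_n(b) - s),\, g)$. This is smooth in $(b,s)$, reduces to the identity when $\mu_n(b) = 0$ (so it extends by the identity across $\partial B_n$) and when $s \geq v_n(b)$, and pushes $s$ strictly forward inside the slab $s \in (v_{n-1}(b), v_n(b))$. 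With such a formula in place the inductive construction goes through, the stabilization argument is immediate from local finiteness, and $H$ is well-defined, smooth, and $G$-equivariant, as required.
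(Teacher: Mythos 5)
Your overall strategy (build a smooth $G$-equivariant self-map of $\pi^{-1}(B\times[0,1])$ covering $(b,t)\mapsto(b,1)$, then apply Proposition~\ref{prop:commsq=>pullbackdiff}) is exactly the paper's, but the construction of the covering map has a genuine gap: your limit map $H$ does not cover $c(b,t)=(b,1)$. By local finiteness, near any $b$ only finitely many of your modifications act, and each one advances the $[0,1]$-coordinate from $s$ only to $s+\mu_n(b)\,\psi(v_n(b)-s)$, which is generically strictly less than $v_n(b)$ (since $\psi(u)=1$ only for $u\geq 1$, while $v_n(b)-s\leq 1$). For instance, with two charts and $\mu_1(b)=\mu_2(b)=\tfrac12$, starting at $t=0$ the coordinate after both steps is $s_2=s_1+\tfrac12\psi(1-s_1)<1$ with $s_1=\tfrac12\psi(\tfrac12)<\tfrac12$. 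So the stabilized map covers some base map $(b,t)\mapsto(b,s_\infty(b,t))$ with $s_\infty<1$ in general, and Proposition~\ref{prop:commsq=>pullbackdiff} then only identifies $\pi|_{B\times[0,1]}$ with the pullback along \emph{that} map, which does not factor through $i$ and gives no comparison with $E_1\times[0,1]$. The pointwise convergence $v_n(b)\to 1$ cannot rescue this, precisely because locally the sequence $H_n$ stabilizes after finitely many steps. (A secondary, fixable defect: as written, $s+\mu_n(b)\psi(v_n(b)-s)$ can exceed $1$, e.g.\ $s=0.6$, $\mu_n(b)=0.5$, $v_n(b)=0.9$ with $\psi(0.3)$ close to $1$, so the map need not even preserve $\pi^{-1}(B\times[0,1])$.)

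The missing idea is to arrange that, for every $b$, at least one of the adjusting functions equals \emph{exactly} $1$, and to use a push that sends $t$ exactly to $1$ when its coefficient is $1$ and fixes $t=1$ thereafter. The paper does this by setting $\sigma(b)=\sum_k\rho_k(b)^2\leq\sup_k\rho_k(b)$ and $u_k(b)=\phi(\rho_k(b)/\sigma(b))$ with $\phi\equiv 1$ on $[1,\infty)$, so $\sup_k u_k(b)=1$ is attained, and then using the affine push $t\mapsto(1-t)u_k(b)+t$ inside each trivialization: the finite composite over each $b$ lands in $B\times\{1\}$ on the nose, so the composite covers $c$ exactly and stays in $[0,1]$. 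If you replace your cumulative sums $v_n$ and the surrogate $\psi$-slide by this renormalization and affine push (keeping your ordering, identity-off-support, and local-finiteness arguments, which are fine), your proof goes through and coincides with the paper's.
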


\begin{proof}
We first show that there is a commutative diagram in $\Diff$
\begin{equation}\label{eq:goal1}
\cxymatrix{\pi^{-1}(B \times [0,1]) \ar[d]_{\pi|_{B \times [0,1]}} \ar[r]^f & \pi^{-1}(B \times [0,1]) \ar[d]^{\pi|_{B \times [0,1]}} \\
          B \times [0,1] \ar[r]_r & B \times [0,1] ,}
\end{equation}
where $f$ is $G$-equivariant and $r(b,t) = (b,1)$.
By the previous proposition, there is a smooth partition of unity $\{\rho_k:B \to \R\}_{k \in K}$
subordinate to a $D$-open cover $\{B_k\}_{k \in K}$ of $B$ 
such that $\pi$ is trivial over $B_k \times [0,1]$ for each $k$.
As in the proof of Lemma~\ref{lem:Bourbaki},
define $\sigma : B \to \R$ by $\sigma(b) = \sum_k \rho_k(b)^2$.
Note that $\sigma$ is smooth, nowhere zero and
$\sigma(b) \leq \sup_k \rho_k(b)$.
Let $u_k(b) = \phi(\rho_k(b)/\sigma(b))$, where $\phi:\R \to \R$ is a smooth function
such that $\phi(t) = 0$ for $t \leq 0$, $\phi(t) = 1$ for $t \geq 1$ and $\im(\phi)=[0,1]$.
Then $\sup_k u_k(b) = 1$ for each $b$ and $\supp(u_k) \subseteq B_k$.

For each $k$, define $r_k : B \times [0,1] \to B \times [0,1]$ by
$r_k(b,t) = (b, H(u_k(b), t))$, where $H : [0,1] \times [0,1] \to [0,1]$ is
defined by $H(s,t) = (1-t) s + t$.
Note that if $u_k(b) = 0$, $r_k(b,t) = (b,t)$, so for any given $b$,
only finitely many $r_k$'s are not the identity.
Also, if $u_k(b) = 1$, then $r_k(b,t) = (b, 1)$.
Now choose a $G$-equivariant trivialization $h_k : B_k \times [0,1] \times G \to \pi^{-1}(B_k \times [0,1])$
and define a function $f_k : E \to E$ over $r_k$ by setting
$f_k(h_k(b,t,g)) = h_k(r_k(b,t), g)$ for $b$ in $B_k$ and $f_k(x) = x$ otherwise. Then $f_k$ is $G$-equivariant.
Since $r_k$ is the identity outside of the support of $u_k$, $f_k$ is smooth.

Choose a total ordering of the indexing set $K$.
Define $f : E \to E$ to be the composite $f_{k_n} \circ \cdots \circ f_{k_1}$ on $\pi^{-1}(\{b\} \times [0,1])$,
where $\{k_1, \ldots, k_n\} = \{ k \in K \mid u_k(b) \neq 0 \}$ and $k_1 < \cdots < k_n$.
This respects the $G$-action, and lies over $r_{k_n} \circ \cdots \circ r_{k_1}$.
The latter composite sends $(b,t)$ to $(b,1)$, since at least one $r_{k_i}$ does,
and every $r_k$ sends $(b,1)$ to $(b,1)$.

It remains to show that $f$ is smooth, and it suffices to check this on an open cover.
For each $b$ in $B$, choose a $D$-open neighbourhood $U$ of $b$
so that $\{ k \in K \mid U \cap B_k \neq \emptyset \}$ is finite, enumerated as
$\{j_1, \ldots, j_n\}$ with $j_1 < \cdots < j_n$.
Then, on $\pi^{-1}(U \times [0,1])$, we have that $f$ is equal to the composite
$f_{j_n} \cdots f_{j_1}$, 
since a map $f_j$ is the identity over $\{b\} \times [0,1]$ if $u_j(b) = 0$.
This shows that $f$ is locally smooth and therefore smooth.
Thus, we have the required diagram~\eqref{eq:goal1}.

Since $r$ factors through $i : B \to B \times [0,1]$ and $p$ is a pullback, we get
a commutative square
\[
  \xymatrix{\pi^{-1}(B \times [0,1]) \ar[d]_{\pi|_{B \times [0,1]}} \ar[r] & E_1 \ar[d]^p \\
          B \times [0,1] \ar[r]_-{p_1} & B ,}
\]
where $p_1$ is the projection.
By Proposition~\ref{prop:commsq=>pullbackdiff}, $\pi|_{B \times [0,1]}$ is isomorphic to the 
pullback of $p$ along $p_1$, which is the product $p \times 1_{[0,1]}$, as required.
\end{proof}

\begin{cor}\label{cor:homotopy-pullback}
If $\pi : E' \to B'$ is a $D$-numerable principal $G$-bundle,
and $f$ and $g$ are smoothly homotopic maps $B \to B'$,
then the pullbacks $f^*(\pi)$ and $g^*(\pi)$ are isomorphic as principal $G$-bundles over $B$.
\end{cor}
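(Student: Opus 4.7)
The plan is to reduce directly to the previous proposition by pulling $\pi$ back along a smooth homotopy between $f$ and $g$. Since all the technical content---including Proposition~\ref{prop:functional-testing-zero} and the construction of a bundle automorphism covering the retraction $r(b,t) = (b,1)$---has already been absorbed into that proposition and into Proposition~\ref{prop:Dpb-on-product}, this corollary should follow essentially formally, and I do not anticipate any new substantive obstacle.

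Concretely, let $H : B \times \R \to B'$ be a smooth homotopy with $H(b,0) = f(b)$ and $H(b,1) = g(b)$, and set $\tilde{\pi} := H^*(\pi) : E'' \to B \times \R$. By Lemma~\ref{lem:pullback-D-numerable-G}, $\tilde{\pi}$ is a $D$-numerable principal $G$-bundle. Applying the preceding proposition to $\tilde{\pi}$, we obtain a $G$-equivariant diffeomorphism
\[
\alpha : \tilde{\pi}^{-1}(B \times [0,1]) \longrightarrow E_1 \times [0,1]
\]
over $B \times [0,1]$, where $p : E_1 \to B$ denotes the pullback of $\tilde{\pi}$ along the inclusion $i_1 : B \to B \times \R$, $b \mapsto (b,1)$.

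By functoriality of pullbacks, $E_1 \to B$ is the pullback of $\pi$ along $H \circ i_1 = g$, hence is isomorphic to $g^*(\pi)$ as a principal $G$-bundle. Now pull the isomorphism $\alpha$ back along $i_0 : B \to B \times [0,1]$, $b \mapsto (b,0)$. On the left this yields the pullback of $\tilde{\pi}$ along $H \circ i_0 = f$, namely $f^*(\pi)$; on the right it yields the pullback of $p \times 1_{[0,1]}$ along $i_0$, namely $p : E_1 \to B \cong g^*(\pi)$. Since pullback of principal $G$-bundles preserves isomorphisms (Proposition~\ref{prop:pullback}), we conclude $f^*(\pi) \cong g^*(\pi)$, as required.
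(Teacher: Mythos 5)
Your proposal is correct and follows essentially the same route as the paper: pull $\pi$ back along the homotopy, invoke Lemma~\ref{lem:pullback-D-numerable-G} for $D$-numerability, apply the preceding proposition to identify the pullback over $B \times [0,1]$ with a product $E_1 \times [0,1]$, and read off $f^*(\pi) \cong p \cong g^*(\pi)$ from the restrictions to $B \times \{0\}$ and $B \times \{1\}$. Your version merely spells out the restriction step in slightly more detail than the paper does.
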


\begin{proof}
Let $F:B \times \R \to B'$ be a smooth homotopy between $f$ and $g$.
Then $F^*(\pi)$ is a $D$-numerable principal $G$-bundle over $B \times \R$ by 
Lemma~\ref{lem:pullback-D-numerable-G}.
By the previous proposition, $F^*(\pi)$ is isomorphic
to a product $E_1 \times [0,1] \to B \times [0,1]$ for a certain principal $G$-bundle
$p : E_1 \to B$.
Thus the restrictions to $B \times \{0\}$ and $B \times \{1\}$ are both isomorphic to $p$.
\end{proof}

Recall that we saw in Section~\ref{se:no-classifying} that this property does not 
hold for an arbitrary principal $G$-bundle.

\begin{cor}
If $\pi : E' \to B'$ is a $D$-numerable diffeological bundle,
and $f$ and $g$ are smoothly homotopic maps $B \to B'$,
then the pullbacks $f^*(\pi)$ and $g^*(\pi)$ are isomorphic as diffeological bundles over $B$.
\end{cor}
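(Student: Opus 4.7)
The plan is to mimic the proof of Corollary~\ref{cor:homotopy-pullback} step by step, checking that the principal-bundle-specific ingredients can be replaced by diffeological-bundle analogs. The overall skeleton is: given a smooth homotopy $F : B \times \R \to B'$, one pulls $\pi$ back to a $D$-numerable diffeological bundle over $B \times \R$ (via Lemma~\ref{lem:pullback-D-numerable}), and then shows that its restriction to $B \times [0,1]$ is isomorphic to a product $E_1 \times [0,1] \to B \times [0,1]$ for some diffeological bundle $p : E_1 \to B$. Restricting to $B \times \{0\}$ and $B \times \{1\}$ then identifies both $f^*(\pi)$ and $g^*(\pi)$ with $p$.

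To execute this, I would first prove the diffeological bundle analog of Proposition~\ref{prop:Dpb-on-product}: for every $D$-numerable diffeological bundle $\pi : E \to B \times \R$, there is a $D$-open cover $\{B_k\}_{k \in K}$ of $B$ with a subordinate smooth partition of unity such that each $\pi|_{B_k \times [0,1]}$ is trivial as a diffeological bundle. The proof given for the principal case uses only local triviality, the $D$-numerability hypothesis, the function $F$ from Proposition~\ref{prop:functional-testing-zero}, Lemma~\ref{lem:Bourbaki}, and the product-topology identification $D(B \times \R) = D(B) \times D(\R)$; none of these steps uses the $G$-action, so the argument carries over essentially verbatim with ``trivial principal $G$-bundle'' replaced by ``trivial diffeological bundle of fiber type $F$''.

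Next I would prove the diffeological analog of the unnamed proposition preceding Corollary~\ref{cor:homotopy-pullback}: for a $D$-numerable diffeological bundle $\pi : E \to B \times \R$, the restriction $\pi|_{B \times [0,1]}$ is isomorphic over $B \times [0,1]$ to $p \times 1_{[0,1]}$, where $p : E_1 \to B$ is the pullback along $b \mapsto (b,1)$. I would use the same functions $u_k$ and the same retractions $r_k(b,t) = (b, H(u_k(b), t))$ as in the principal case, and lift each $r_k$ to $f_k : E \to E$ using a diffeological trivialization $h_k : B_k \times [0,1] \times F \to \pi^{-1}(B_k \times [0,1])$ by the formula $f_k(h_k(b,t,y)) = h_k(r_k(b,t), y)$, extended by the identity where $u_k = 0$. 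The same well-ordering argument produces a fiberwise composite $f$ over $(b,t) \mapsto (b,1)$; smoothness follows from the local-finiteness of the $\supp(u_k)$ exactly as before, and the conclusion then follows from Proposition~\ref{prop:commsq=>pullbackdiff}.

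The only point that needs care is the construction of $f_k$: in the principal case, $G$-equivariance of $h_k$ was used both to define $f_k$ without ambiguity on $G$-orbits and to ensure the composed $f$ intertwines with the $G$-action. In the diffeological setting there is no group to respect, but we also lose the canonical form ``translate the $G$-coordinate''; however, the trivialization $h_k$ is a diffeomorphism, so $f_k$ is unambiguously defined by the formula above on $\pi^{-1}(B_k \times [0,1])$ and equals the identity outside $\pi^{-1}(\supp(u_k) \times [0,1])$, making the two definitions agree where $u_k = 0$. I expect this to be the main (minor) obstacle, and once it is resolved the corollary follows immediately by applying the resulting isomorphism to $F^*(\pi)$ as outlined in the first paragraph.
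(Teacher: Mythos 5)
Your plan transcribes the principal-bundle machinery with $G$ deleted, and most of it does carry over (the analog of Proposition~\ref{prop:Dpb-on-product} indeed never uses the group action, and the maps $f_k$ and $f$ can be built from diffeological trivializations exactly as you say). The genuine gap is the last step: you conclude by invoking Proposition~\ref{prop:commsq=>pullbackdiff}, but that proposition is a statement about principal $G$-bundles and $G$-equivariant maps, and there is no analog in the paper for general diffeological bundles. Moreover, the naive analog is false in the diffeological setting: a smooth bundle map over $g$ that is bijective (or even a diffeomorphism) on each fiber need not identify the source with the pullback, because a smooth bijection of diffeological spaces need not be a diffeomorphism, and a smoothly parametrized family of fiber diffeomorphisms $(u,y)\mapsto \psi_u(y)$ need not have a smoothly parametrized family of inverses --- this is precisely why the diffeology on $\Diff(F)$ requires both $(u,x)\mapsto p(u)(x)$ and $(u,x)\mapsto (p(u))^{-1}(x)$ to be smooth. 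So to finish along your route you must prove directly that your specific $f$ (which in each chart $h_k$ is the identity on the $F$-coordinate) induces a diffeomorphism onto $E_1\times[0,1]$, e.g.\ by writing down a smooth inverse locally using the finitely many relevant $h_{j_i}^{\pm1}$ and the smooth reparametrizations $t\mapsto H(u_{j_i}(b),t)$; this is plausible but is an additional argument, not an immediate citation.

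For comparison, the paper's proof is one line and sidesteps this entirely: it passes to the frame bundle $\frme(\pi)$, the associated principal $\Diff(F)$-bundle of \cite[8.16]{I2} developed in Section~\ref{se:classify-bundle}, which is $D$-numerable when $\pi$ is; applies Corollary~\ref{cor:homotopy-pullback} to it; and transports back using $\assoc\circ\frme=\mathrm{id}$ and compatibility with pullbacks. The reduction to $\Diff(F)$-equivariance is exactly what encodes the smoothness of the fiberwise inverses, which is the point your direct approach still has to handle by hand. If you prefer not to redo the two propositions, the frame-bundle reduction gives the corollary with no new analysis.
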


\begin{proof}
This follows from~\cite[8.16]{I2} (see Section~\ref{se:classify-bundle}) and
Corollary~\ref{cor:homotopy-pullback}.
\end{proof}

\section{Classifying $D$-numerable principal bundles}
\label{se:classify-Dpb}

In this section, which forms the heart of the paper,
we construct a classifying space for all $D$-numerable principal bundles.

Let $G$ be a diffeological group with identity $e$.  Consider the infinite simplex
\[
\Delta^\omega := \{(t_0,t_1,\ldots) \in \oplus_\omega \R \,\mid\, \sum_{i=0}^\infty t_i = 1 \text{ and } t_i \geq 0 \text{ for each } i\},
\]
equipped with the sub-diffeology of $\oplus_\omega \R$, 
where $\oplus_\omega \R$ is the coproduct of countably many 
copies of $\R$ in $\DVect$ (see~\cite[Proposition~3.2]{Wu}). 
Explicitly, a function $t : U \to \Delta^{\omega}$ is a plot if and only if each
component function $t_i : U \to \R$ is smooth 
and for each $u \in U$ there are an open neighbourhood $V$ of $u$ and $n \in \N$
such that $t_i(v) = 0$ for all $v \in V$ and $i > n$. 
Put another way, any such plot $t$ locally lands in 
\[
\Delta^n := \{(t_0,t_1,\ldots,t_n) \in \R^{n+1} \,\mid\, \sum_{i=0}^n t_i = 1 \text{ and } t_i \geq 0 \text{ for each } i\}
\]
for some $n$, where $\Delta^n$ has the sub-diffeology of $\R^{n+1}$, and is
also naturally a diffeological subspace of $\Delta^{\omega}$.

On $\Delta^\omega \times \prod_\omega G$, 
define $(t_i,g_i) \sim (t_i',g_i')$ if the following conditions are satisfied: 
\begin{enumerate}
 \item $t_i=t_i'$ for each $i \in \omega$;
 \item if $t_i = t_i' \neq 0$, then $g_i=g_i'$.
\end{enumerate} 
This is an equivalence relation on $\Delta^\omega \times \prod_\omega G$, and we write $EG$ for the 
quotient diffeological space and $[t_i, g_i]_{EG}$ or simply $[t_i, g_i]$ for an equivalence class.

Now we consider group actions. Define $(\Delta^\omega \times \prod_\omega G) \times G \to \Delta^\omega \times \prod_\omega G$
by $((t_i,g_i),g) \mapsto (t_i,g_i g)$. Note that this is smooth and compatible with the equivalence relation $\sim$, and hence induces a
smooth right action $EG \times G \to EG$. It is easy to see that this action is free, i.e., $[t_i,g_i] \cdot g = [t_i,g_i]$ 
implies that $g=e$. We write $BG$ for the corresponding orbit space with the quotient diffeology
and write elements in $BG$ as $[t_i,g_i]_{BG}$
or simply $[t_i,g_i]$ if no confusion will occur. 

Both $E$ and $B$ are functors from the 
category of diffeological groups and smooth group homomorphisms to $\Diff$. 

Our first goal is to show that the quotient map $\pi : EG \to BG$ is a $D$-numerable
principal $G$-bundle.  This requires a lemma that we will use implicitly in various
places, and a remark.

\begin{lem}
The function $f_i : BG \to \R$ sending $[t_j,g_j]$ to $t_i$ is smooth for each $i$. 
\end{lem}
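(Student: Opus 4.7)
The plan is to use the universal property of quotient diffeologies together with the explicit description of the diffeology on $\Delta^\omega$ given immediately before the lemma.

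First, I would observe that the coordinate projection $\pi_i : \Delta^\omega \to \R$ sending $(t_0, t_1, \ldots)$ to $t_i$ is smooth. By the description of plots of $\Delta^\omega$ as the sub-diffeology of $\oplus_\omega \R$, every plot has smooth component functions; equivalently, $\pi_i$ factors as $\Delta^\omega \hookrightarrow \oplus_\omega \R \to \R$, where the second map is the $i$-th coordinate of the coproduct in $\DVect$, which is smooth. Thus the composite map
\[
 \Delta^\omega \times \prod_\omega G \xrightarrow{p_1} \Delta^\omega \xrightarrow{\pi_i} \R
\]
is smooth, where $p_1$ is the first-factor projection.

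Second, I would note that both quotient maps $q_1 : \Delta^\omega \times \prod_\omega G \to EG$ and $q_2 : EG \to BG$ are subductions by construction, so their composite $q := q_2 \circ q_1$ is also a subduction. The map $f_i$ is well-defined on $BG$: the $t_i$ coordinate is preserved by the equivalence relation $\sim$ (condition (1) in the definition), and it is preserved by the $G$-action on $EG$ (which acts only on the $g_j$ coordinates). Therefore $f_i \circ q = \pi_i \circ p_1$, which we just saw is smooth.

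Finally, since $q$ is a subduction, smoothness of $f_i \circ q$ implies smoothness of $f_i$ (a function out of a quotient is smooth whenever its precomposition with the quotient map is smooth). This completes the argument. There is no substantive obstacle here; the only thing to check carefully is that the equivalence relation indeed preserves the $t_i$ coordinate, which is built into clause (1) of its definition.
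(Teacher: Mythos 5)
Your argument is correct and is essentially the paper's own proof: both reduce smoothness of $f_i$ to smoothness of its precomposition with the quotient maps, and identify that composite with $\Delta^\omega \times \prod_\omega G \to \Delta^\omega \hookrightarrow \oplus_\omega \R \to \R$, all of whose factors are smooth. Your write-up just makes explicit the subduction/well-definedness bookkeeping that the paper leaves implicit.
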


\begin{proof}
It suffices to show that the composite $\Delta^{\omega} \times \prod_{\omega} G \to EG \to BG \to \R$ is smooth,
where the first two maps are the quotient maps and the third map is $f_i$.
This composite is equal to the composite
$\Delta^{\omega} \times \prod_{\omega} G \to \Delta^{\omega} \hookrightarrow \oplus_{\omega} \R \to \R$,
where the first map is the projection, the second is the inclusion, and the third is projection onto the
$i^{th}$ summand, all of which are smooth.
\end{proof}

\begin{rem}\label{rem:EG_n}
Any plot $p : U \to EG$ locally factors through the quotient map 
$\Delta^{\omega} \times \prod_{\omega} G \to EG$.
Therefore, by the description of the diffeology on $\Delta^{\omega}$, it
locally lands in $\Delta^n \times \prod_{\omega} G$ for some $n$.
This lift can be adjusted so that its values $(t_i, g_i)$ have
$g_i = e$ for $i > n$, which means that it factors through the natural map
from $\Delta^n \times G^{n+1}$.
In particular, if we let $EG_n \subseteq EG$ consist of those points $[t_i, g_i]$ with $t_i = 0$ for all $i > n$,
then $p$ locally factors through $EG_n$ for some $n$.

Similarly, a plot $p : U \to BG$ locally factors through $\Delta^n \times G^{n+1}$ for some $n$.
In particular, if we define $BG_n \subseteq BG$ analogously, $p$ locally factors through
$BG_n$ for some $n$.

Another way to phrase these facts is to say that $EG = \colim EG_n$ and $BG = \colim BG_n$,
where the colimits are in the category of diffeological spaces.
It follows from this and~\cite[Lemmas~3.17 and~4.1]{CSW} that if $D(G)$ is a locally compact
Hausdorff topological group, then $D(BG) \cong B_{\textrm{Top}}(D(G))$, where the right-hand-side denotes the
usual classifying space construction applied to the topological group $D(G)$.
\end{rem}

\begin{thm}\label{thm:EG->BG-D-numerable-principal}
 The quotient map $\pi:EG \to BG$ is a $D$-numerable principal $G$-bundle.
\end{thm}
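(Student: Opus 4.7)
The plan is to establish two separate properties: that $\pi$ is a locally trivial principal $G$-bundle, and that it is $D$-numerable. For the first, I would exploit the barycentric coordinate functions $f_i$ to cover $BG$ by the $D$-open sets $U_i := f_i^{-1}((0,\infty))$, which cover $BG$ since every class $[t_j,g_j]_{BG}$ has at least one positive $t_i$. On each $U_i$ I define a section $s_i : U_i \to EG$ by
\[
s_i([t_j,g_j]_{BG}) = [t_j,\, g_j g_i^{-1}]_{EG}.
\]
This is well-defined because $t_i>0$ forces $g_i$ to be uniquely determined by the $BG$-equivalence class. Smoothness would follow by lifting a plot $p : U \to U_i \subseteq BG$ locally through $\Delta^n \times G^{n+1}$ for some $n \geq i$ (as permitted by Remark~\ref{rem:EG_n}), so that the $g_i$-coordinate of the lift, and hence $g_i^{-1}$ and each $g_j g_i^{-1}$, depends smoothly on the plot parameter. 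The section then produces a $G$-equivariant diffeomorphism $U_i \times G \cong \pi^{-1}(U_i)$ via $(b,g) \mapsto s_i(b)\cdot g$, showing $\pi$ is locally trivial as a principal $G$-bundle.

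For $D$-numerability, I need a smooth partition of unity subordinate to a $D$-open trivializing cover. The natural candidates $\{f_i\}$ are smooth, non-negative and satisfy $\sum_i f_i = 1$ pointwise (the sum being locally finite along any plot by Remark~\ref{rem:EG_n}), but the supports $\overline{U_i}$ are \emph{not} locally finite in the $D$-topology of $BG$: a $D$-open neighborhood of a class like $[(1,0,0,\ldots),(g_j)]_{BG}$ can, via suitable plots, contain points with $t_i$ positive for any prescribed $i$. To remedy this I would build modified smooth bumps $\mu_i$ in an ``ordered'' fashion—e.g.\ of the form $\phi(f_i - c \sum_{j\neq i} f_j)$ or similar constructions comparing $f_i$ to a tail sum, with $\phi$ the one-sided bump of Lemma~\ref{lem:Bourbaki}—then apply Lemma~\ref{lem:Bourbaki} to force $\supp(\mu_i) \subseteq U_i$ and Lemma~\ref{lem:countable} to reorganize the family into a countable, locally finite partition of unity still subordinate to a cover on which $\pi$ remains trivial.

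The principal obstacle is the local-finiteness step. The straightforward choice $\{f_i\}$ fails to give a diffeological partition of unity because the $D$-topology on $BG$ is coarse enough that ``high-codimension'' points (those with most $t_j$ vanishing) sit in the closure of every $U_i$; engineering the cutoffs so that each point admits a $D$-open neighborhood meeting only finitely many supports requires careful combination of the plot-wise finiteness coming from $BG = \colim BG_n$ with the shrinking and countability lemmas of Section~\ref{se:D-numerable}. The remaining pieces—smoothness of the sections, $G$-equivariance of the trivializations, and assembly of the partition of unity once local finiteness is secured—are direct consequences of the diffeological machinery already in place.
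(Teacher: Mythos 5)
Your first half is close in spirit to what is needed, but note that you cannot simply assert that $(b,g)\mapsto s_i(b)\cdot g$ is a diffeomorphism onto $\pi^{-1}(U_i)$: the nontrivial point is smoothness of the inverse, i.e.\ that $[t_j,g_j]\mapsto g_i$ is smooth on $\pi^{-1}(U_i)$, and you cannot appeal to Remark~\ref{rem:trivial-pb} (section $\Rightarrow$ trivial) because that remark presupposes that $\pi|_{U_i}$ is already a principal bundle, which is what you are trying to establish. The paper instead first proves the principal bundle property via the induction criterion of Theorem~\ref{thm:principal}, showing that the implicitly defined map $\tau:U\to G$ is smooth by working locally where some $t_i\neq 0$; your lifting argument through $\Delta^n\times G^{n+1}$ can be upgraded to do this, but the step is missing as written.

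The genuine gap is in the $D$-numerability half. You correctly observe that $\{f_i^{-1}((0,\infty))\}$ is not locally finite, but your proposed remedy is circular: Lemmas~\ref{lem:Bourbaki} and~\ref{lem:countable} both take a smooth partition of unity (in particular, with locally finite supports) as \emph{input}, so they cannot be used to manufacture local finiteness from the $f_i$, which fail exactly that condition. Moreover, the specific bumps you suggest do not work: $\phi(f_i - c\sum_{j\neq i}f_j)=\phi((1+c)f_i-c)$ imposes a threshold on $t_i$ that is independent of $i$, and the resulting sets fail to cover $BG$ (a point with $t_j=1/n$ for $n$ large values of $j$ lies in none of them), while comparisons of $f_i$ with tail sums remain scale-invariant in the high coordinates, so every $D$-open neighbourhood of a point such as $[(1,0,0,\ldots),(e,e,\ldots)]$ still meets infinitely many of the resulting sets (push a tiny amount of mass into coordinate $i$ along a plot, for each $i$). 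The missing idea is the paper's use of \emph{absolute, summable} thresholds: take $B_i=\{[t_j,g_j]\mid t_i>1/2^{i+2}\}$, which cover because $\sum_i 1/2^{i+2}<1$, and are locally finite because $\{[t_s,g_s]\mid t_i<1/2^{i+2}\text{ for all }i>N\}$ is a $D$-open neighbourhood of any point with $t_i=0$ for $i>N$ meeting only finitely many $B_i$; one then builds the partition of unity directly from $\rho(t_i-1/2^{i+1})$ normalized by the (locally finite, nowhere vanishing) sum, rather than from the shrinking lemmas. Without some such quantitative construction, the heart of the $D$-numerability claim is unproven.
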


\begin{proof}
 We first show that $\pi$ is a locally trivial principal $G$-bundle.
 Let $B_i := \{[t_j,g_j] \in BG \mid t_i > 1/2^{i+1}\}$. Then $B_i$ is $D$-open in $BG$. Since 
 $\sum_{i=0}^\infty 1/2^{i+1} = 1$, we see that the $B_i$'s cover $BG$.
 We claim that $\pi|_{B_i}$ is trivial for each $i$. 
 Define $h : B_i \times G \to \pi^{-1}(B_i)$ by sending $([t_j, g_j]_{BG}, g)$
 to $[t_j, g_j g_i^{-1} g]_{EG}$.
 This is smooth, $G$-equivariant, and commutes with the projections to $B_i$.
 Its inverse sends $[t_j, g_j]_{EG}$ in $\pi^{-1}(B_i)$ to $([t_j, g_j]_{BG}, g_i)$,
 and is therefore also smooth and $G$-equivariant.
 So $\pi$ is locally trivial.

 Now we show that $\pi$ is $D$-numerable.
 We first claim that the $D$-open cover $\{ B_i \}$ is locally finite.
 Fix $[\bar{t}_j, \bar{g}_j] \in BG$, and choose $N$ so that $\bar{t}_i=0$ for all $i>N$. 
 Let $B := \{[t_j,g_j] \in BG \mid t_i < 1/2^{i+1} \text{ for all } i>N\}$. Then $[\bar{t}_j,\bar{g}_j] \in B$ and $B$ only intersects 
 finitely many $B_i$'s. We are left to show that $B$ is $D$-open. Let $p:U \to BG$ be a plot.
 By Remark~\ref{rem:EG_n}, we can replace $U$ by a smaller open subset so
 that there exist $n \in \N$ and a smooth map $U \to \Delta^n \times G^{n+1}$ such that the following 
 diagram commutes:
 \[
  \xymatrix{U \ar[d] \ar[dr]^p \\ \Delta^n \times G^{n+1} \ar[r] & BG.}
 \]
 Since the preimage of $B$ in $\Delta^n \times G^{n+1}$ under the horizontal map in the above diagram 
 is $D$-open, as it is a finite intersection of $D$-open subsets, $p^{-1}(B)$ is open in $U$. Hence $B$ is $D$-open in $BG$.

 Fix any smooth function $\rho:\R \to \R$ such that $\rho(t)=0$ for all $t \leq 0$ and $\rho$ is strictly increasing on 
 $(0,\infty)$. Define $\tau_i:BG \to \R$ by 
 \[
  \tau_i([t_j,g_j]) = \rho(t_i - 1/2^{i+1}).
 \]
 Note that $B_i = \tau_i^{-1}((0, \infty))$, so these open supports are locally finite.
 Since $\sum_{i=0}^n 1/2^{i+1} < 1$ for each $n$, at least one $\tau_i$ is nonzero at
 each point, so we can normalize the $\tau_i$'s to obtain a smooth partition of unity.
 By Lemma~\ref{lem:Bourbaki}, we obtain a smooth partition of unity subordinate to the
 trivializing open cover $\{B_i\}$ of $BG$.
 Therefore, $\pi:EG \to BG$ is $D$-numerable. 
\end{proof}

The next result will imply that $EG$ is contractible and is a key
step in proving that $\pi : EG \to BG$ is a universal $D$-numerable bundle.

\begin{prop}\label{prop:EG-subterminal}
Let $E$ be any diffeological space with a right $G$-action,
and let $h_0, h_1 : E \to EG$ be $G$-equivariant maps.
Then there is a smooth $G$-equivariant homotopy $h_0 \simeq h_1$.
\end{prop}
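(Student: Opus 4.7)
The plan is to follow the Milnor--Dold strategy and construct the required $G$-equivariant homotopy as the concatenation of three $G$-equivariant sub-homotopies
\[
  h_0 \;\simeq\; L\circ h_0 \;\simeq\; R\circ h_1 \;\simeq\; h_1,
\]
where $L,R\colon EG\to EG$ are the smooth $G$-equivariant ``interleave shifts''
\[
  L([t_i;g_i]) = [t_0,0,t_1,0,t_2,0,\ldots;\, g_0,e,g_1,e,g_2,e,\ldots],
\]
\[
  R([t_i;g_i]) = [0,t_0,0,t_1,0,t_2,\ldots;\, e,g_0,e,g_1,e,g_2,\ldots].
\]
Gluing will use a smooth cut-off $\rho\colon\R\to[0,1]$ with $\rho(s)=0$ for $s\le 0$ and $\rho(s)=1$ for $s\ge 1$, extended by constants so that each sub-homotopy becomes an honest smooth map on $E\times\R$ that is locally constant near the joins.

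The middle sub-homotopy is the simplest piece and captures the heart of the interleaving idea. Writing $h_j(x)=[t_i^j(x);g_i^j(x)]$, I would set
\[
  H_2(x,s)=\bigl[(1-\rho(s))t^0_0,\; \rho(s)\, t^1_0,\; (1-\rho(s))t^0_1,\; \rho(s)\, t^1_1,\;\ldots;\; g^0_0,\,g^1_0,\,g^0_1,\,g^1_1,\,\ldots\bigr].
\]
The weights sum to $1$; at each coordinate position only one of the two sequences $(g^j_i)$ contributes (even positions receive data exclusively from $h_0$ and odd positions exclusively from $h_1$), so no equivalence-class incompatibility can arise in the quotient $EG$; smoothness is immediate from that of $h_0,h_1$ and $\rho$; and $G$-equivariance follows because $h_0,h_1$ are equivariant and $G$ acts on each $g^j_i$-slot uniformly. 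At $s=0,1$ we recover $L\circ h_0$ and $R\circ h_1$, as required.

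The main obstacle is the outer homotopies $h\simeq L\circ h$ and $h\simeq R\circ h$. A direct linear interpolation of weights fails because position $2k$ would have to simultaneously carry the generally distinct group elements $g_{2k}^h(x)$ (from the original weight there) and $g_k^h(x)$ (arriving from position $k$), violating the equivalence relation defining $EG$ whenever both $t^h_{2k}(x),t^h_k(x)>0$. My plan to resolve this is a \emph{buffered slide} that uses the infinite tail of $EG$: by Remark~\ref{rem:EG_n} every plot of $EG$ locally lands in some $EG_n$, so along any plot of $E$ all but finitely many coordinates $t_i^h$ vanish locally, freeing infinitely many buffer positions. First, using a smoothly scheduled family of bump functions with pairwise disjoint time-supports, I would transport each occupied coordinate $(t^h_k,g^h_k)$ one at a time from position $k$ into a currently unused buffer position, so that at every instant each position holds at most one $g$-value. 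Then, having freed the originally occupied positions, I would slide the relocated coordinates into the target positions (even for $L$, odd for $R$) by the same one-at-a-time scheduling. Smoothness is guaranteed because only finitely many bumps are active near any given plot of $E$, and $G$-equivariance is preserved because each $g$-value is only translated, never combined with another. Concatenating the three sub-homotopies yields the desired equivariant homotopy $h_0\simeq h_1$; the careful smooth scheduling of the buffered slide is the main technical work.
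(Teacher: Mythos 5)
Your overall decomposition---interleave $h_0$ into the even slots, $h_1$ into the odd slots, and join them by the straight-line homotopy---is exactly the paper's strategy, and your middle homotopy $H_2$ agrees with the one used there (its smoothness requires choosing representatives $g^j_i$ locally smoothly via Remark~\ref{rem:EG_n}, but that is routine). The genuine gap is in the outer homotopies $h \simeq L\circ h$ and $h \simeq R\circ h$, which is where essentially all the work of the proposition lies, and the ``buffered slide'' as described does not produce a well-defined map. A homotopy must be a single smooth map $E\times\R\to EG$; you cannot build it plot-by-plot using ``currently unused buffer positions,'' because which slots are unoccupied is a point-dependent condition that is neither smooth nor locally constant (a weight vanishing at a point may be nonzero arbitrarily nearby), and the integer $n$ with image locally in $EG_n$ varies from plot to plot, so buffer schedules chosen for different $n$ need not agree on overlaps and do not glue to one map. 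Worse, the slide cannot even start as a uniform formula: whatever slot $b$ you designate as the first buffer, there are points whose slot $b$ carries nonzero weight with a group element different from the one being transported, and a point of $EG$ cannot hold two group elements in one slot, so the intermediate stages are undefined there. Scheduling ``one coordinate at a time'' does not help, since every slot is occupied at some point of $EG$, so no finite stage has a globally free buffer.

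The paper's resolution avoids empty buffers altogether by a telescoping (Milnor-style) homotopy whose stages accumulate at $t=0$. For $t\in[\frac{1}{n+1},\frac{1}{n})$ one fixes the coordinates $i<n$ and splits each tail coordinate $(t_{n+j},g_{n+j})$ between the two slots $n+2j$ and $n+2j+1$, with weights $(1-\alpha(t))\,t_{n+j}$ and $\alpha(t)\,t_{n+j}$ and the \emph{same} group element $g_{n+j}$ in both, where $\alpha$ comes from a cutoff $\rho$ that is flat near its endpoints (so the formulas agree across $t=\frac{1}{n}$). Distinct source slots feed disjoint pairs of target slots and each target receives only one group element, so no conflict can occur for any point, occupied or not; at $t\geq 1$ the image lies in $EG^{\od}$ (your $L\circ h$), and smoothness at the accumulation point $t=0$ holds because a plot locally lands in some $EG_n$ by Remark~\ref{rem:EG_n}, where the homotopy is stationary for $t<\frac{1}{n+1}$. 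Replacing your buffered slide by this uniform formula (and its analogue producing $EG^{\ev}$ for $h_1$) repairs the argument, after which the rest of your outline goes through as written.
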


By a $G$-equivariant homotopy, we mean a homotopy through $G$-equivariant maps.

\begin{proof}
Fix a smooth function $\rho:\R \to \R$ such that there exists $\epsilon >0$ with $\rho(t)=0$ if $t < \epsilon$, 
$\rho(t)=1$ if $t > 1 - \epsilon$, and $\im(\rho) = [0,1]$.
Define $H^{\od} :EG \times \R \to EG$ by sending $([t_i, g_i], t)$ to $[t_i', g_i']$
defined as follows.
If $t \leq 0$, then $[t_i', g_i'] = [t_i, g_i]$.
If $t$ is in the interval $\left[ \frac{1}{n+1}, \frac{1}{n} \right)$ for $n \in \N^{>0}$,
then
\begin{align*}
t_i' &= \begin{cases}
\hfill                        t_i,\hspace{12pt} & \text{if $i < n$,} \\
            (1 - \alpha(t))   t_{n+j},           & \text{if $i = n + 2j$ for $j \in \N$,} \\
\hfill           \alpha(t) \, t_{n+j},           & \text{if $i = n + 2j + 1$ for $j \in \N$,} \\
        \end{cases}\\
\intertext{where}
\alpha(t) &= \rho \! \left( \frac{t-\frac{1}{n+1}}{\frac{1}{n}-\frac{1}{n+1}} \right), \\
\intertext{and}
g_i' &= \begin{cases}
                              g_i,              & \text{if $i < n$,} \\
                              g_{n+j},           & \text{if $i = n + 2j$ for $j \in \N$,} \\
                              g_{n+j},           & \text{if $i = n + 2j + 1$ for $j \in \N$.} \\
        \end{cases}
\end{align*}
If $t \geq 1$, then $t_{2j}' = t_j$, $t_{2j+1}' = 0$, $g_{2j}' = g_j$ and $g_{2j+1}' = e$ for $j \in \N$.
Although $g_i$ is not well-defined when $t_i = 0$, $H^{\od}$ is well-defined.
Also, $H^{\od}|_{t=0} = 1_{EG}$ and $H^{\od}|_{t=1}$ lands in the
subset $EG^{\od} := \{ [t_i, g_i] \in EG \mid t_i = 0 \text{ for $i$ odd}\}$.
One can see that $H^{\od}$ is smooth, using that
every plot of $EG$ locally factors through $\Delta^n \times G^{n+1}$ for some $n$
(Remark~\ref{rem:EG_n}).
Also, $H^{\od}$ is a homotopy through $G$-equivariant maps.
It follows that $h_0$ is $G$-equivariantly homotopic to a map $h_0'$ landing in $EG^{\od}$.

Similarly, we can show that $h_1$ is $G$-equivariantly homotopic to a map $h_1'$ landing
in $EG^{\ev} := \{ [t_i, g_i] \in EG \mid t_i = 0 \text{ for $i$ even}\}$.

Now define $H : E \times \R \to EG$ as follows.  Given $(x,t) \in E \times \R$,
suppose $h_s'(x) = [t^s_i, g^s_i]$ for $s = 0, 1$.
Define $H(x,t)$ to be $[t_i, g_i]$, where
\begin{align*}
 t_i &= \begin{cases}
            (1 - \rho(t))   t^0_i, & \text{if $i$ is even,} \\
          \hfill \rho(t) \, t^1_i, & \text{if $i$ is odd,}  \\
        \end{cases}\\
 g_i &= \begin{cases}
            \hspace*{44pt}  g^0_i, & \text{if $i$ is even,} \\
            \hspace*{44pt}  g^1_i, & \text{if $i$ is odd.}  \\
        \end{cases}
\end{align*}
Although $g^s_i$ is not well-defined when $t^s_i = 0$, $H(x,t)$ is well-defined.
In fact, by Remark~\ref{rem:EG_n}, we can locally make smooth choices of 
representatives $g^s_i$, which shows that $H$ is smooth.
Since $h_0'$ and $h_1'$ are $G$-equivariant, so is $H$.
And $H$ is a homotopy between $h_0'$ and $h_1'$,
which shows that $h_0$ and $h_1$ are smoothly $G$-equivariantly homotopic.
\end{proof}

\begin{cor}\label{cor:EG-contractible}
For any diffeological group $G$, $EG$ is smoothly contractible.
\end{cor}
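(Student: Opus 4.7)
The plan is to produce a smooth homotopy from $1_{EG}$ to a constant map by splitting the task into a $G$-equivariant step that invokes Proposition~\ref{prop:EG-subterminal} and an elementary, non-equivariant step that I construct by hand. The naive attempt of applying Proposition~\ref{prop:EG-subterminal} directly with $h_0 = 1_{EG}$ and $h_1$ a constant map fails: the right action of $G$ on $EG$ is free, so a constant map $EG \to EG$ is $G$-equivariant only when $G$ is trivial. The work-around is to slip a $G$-equivariant shift map in between.

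For the first stage, I would introduce the shift map $\sigma : EG \to EG$ defined by
\[
\sigma\bigl([t_0,t_1,t_2,\ldots;\,g_0,g_1,g_2,\ldots]\bigr) \;=\; [0,\,t_0,\,t_1,\ldots;\,e,\,g_0,\,g_1,\ldots].
\]
Since the zeroth $t$-coordinate of the output is $0$, the zeroth $g$-slot is irrelevant, which makes $\sigma$ well-defined on the quotient; smoothness reduces, via Remark~\ref{rem:EG_n}, to the fact that a plot of $EG$ locally factors through some $\Delta^n \times G^{n+1}$, and $\sigma$ visibly sends this to $\Delta^{n+1} \times G^{n+2}$ by a smooth rule. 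The map $\sigma$ is $G$-equivariant for the same reason it is well-defined: right multiplication by $g$ changes the (irrelevant) zeroth $g$-entry together with the remaining $g_i$'s, matching $\sigma([t_i, g_i] \cdot g)$. Proposition~\ref{prop:EG-subterminal}, applied with $E = EG$ and $(h_0, h_1) = (1_{EG}, \sigma)$, then supplies a smooth homotopy $1_{EG} \simeq \sigma$.

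For the second stage, I would contract $\sigma$ to the constant map at $x_0 := [1,0,0,\ldots]$ via the explicit formula
\[
H\bigl([t_i,g_i],\tau\bigr) \;=\; \bigl[\rho(\tau),\,(1-\rho(\tau))t_0,\,(1-\rho(\tau))t_1,\ldots;\;e,\,g_0,\,g_1,\ldots\bigr],
\]
where $\rho : \R \to [0,1]$ is a fixed smooth function with $\rho(0) = 0$ and $\rho(1) = 1$. Well-definedness again follows because the only $g$-entry that could ever be forced by its $t$-entry to matter and was not originally present, namely the one next to $\rho(\tau)$, is assigned the canonical value $e$; smoothness is again by Remark~\ref{rem:EG_n}. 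At $\tau = 0$ the formula reproduces $\sigma$, and at $\tau = 1$ every coordinate except the zeroth has $t$-value $0$, collapsing the output to $x_0$. Splicing $H$ with the homotopy from the first stage, via a standard smooth reparameterization of $\R$, yields a smooth homotopy $1_{EG} \simeq \mathrm{const}_{x_0}$, as required.

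The main obstacle I anticipate is bookkeeping: one must verify well-definedness on the quotient $EG$ and smoothness simultaneously for both $\sigma$ and $H$. Both verifications collapse to the same dual observation — that whenever a formula creates a new coordinate with $t$-value forced to $0$, the accompanying $g$-value is free and can be set to $e$, and that plots of $EG$ locally factor through finite stages $\Delta^n \times G^{n+1}$ — so once this template is set up for $\sigma$ it applies verbatim to $H$.
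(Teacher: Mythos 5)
Your argument is correct, but it takes a different route from the paper's. You handle the obstruction you correctly identified (a constant map $EG \to EG$ is not $G$-equivariant, since the action is free) by interposing the equivariant shift map $\sigma$ and then writing down an explicit, non-equivariant linear contraction $H$ onto $[1,0,0,\ldots;e,e,\ldots]$; both of your verifications (well-definedness on the quotient because a slot whose $t$-entry is forced to $0$ has a free $g$-entry, and smoothness via the local factorization through $\Delta^n \times G^{n+1}$ of Remark~\ref{rem:EG_n}) go through, and the splice is legitimate since smooth homotopy is an equivalence relation. The paper instead handles the same obstruction with no new formulas: smooth maps $B \to EG$ correspond, by freely extending along the action, to $G$-equivariant maps $B \times G \to EG$ (send $(b,g)$ to $f(b)\cdot g$); Proposition~\ref{prop:EG-subterminal} makes any two such equivariant maps equivariantly homotopic, and restricting to $B \times \{e\}$ shows that \emph{any} two smooth maps $B \to EG$ are smoothly homotopic, with contractibility as the special case $f_0 = 1_{EG}$, $f_1$ constant. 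The paper's adjunction argument is shorter and yields the stronger "all maps into $EG$ are homotopic" statement directly, while yours is the classical Milnor-style construction (shift, then contract in the simplex coordinates), which is more explicit but requires checking well-definedness and smoothness of the two auxiliary maps — checks which, as you note, reduce to the same template and are routine here.
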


\begin{proof}
Let $B$ be any diffeological space.  Then smooth maps $B \to EG$ biject
with $G$-equivariant maps $B \times G \to EG$.
Given two smooth maps $f_0, f_1 : B \to EG$, the associated maps
$B \times G \to EG$ are smoothly homotopic, by Proposition~\ref{prop:EG-subterminal}.
Restricting to $e \in G$ gives a smooth homotopy $f_0 \simeq f_1$.
Therefore, $EG$ is smoothly contractible.
\end{proof}

\begin{rem}
Since every diffeological group is fibrant (\cite[Proposition~4.30]{CW1}), 
and every diffeological bundle with fibrant fiber is a fibration (\cite[Proposition~4.28]{CW1}), 
we know that $\pi:EG \to BG$ is always a fibration. 
Also, by the long exact sequence of smooth homotopy groups of a diffeological bundle (\cite[8.21]{I2}) together with 
Corollary~\ref{cor:EG-contractible}, we have a group isomorphism $\pi_{n+1}^D(BG,b) \cong \pi_n^D(G,e)$ for every 
$n \in \N$ and $b \in BG$.
In addition, $BG$ is path-connected.  Indeed, given a point $[t_i, g_i]$ in $BG$,
choose a path in the infinite simplex from $(t_i)$ to $(1, 0, 0, \ldots)$.
This gives a path in $BG$ from $[t_i, g_i]$ to $[(1, 0, 0, \ldots), (g_0, g_1, \ldots)] = [(1, 0, 0, \ldots), (e, e, \ldots)]$.
\end{rem}

\begin{de}
Let $G$ be a diffeological group and let $B$ be a diffeological space. 
Write $\Prin_G(B)$ (resp.\ $\Prin_G^D(B)$) for the set of all (resp.\ $D$-numerable) principal $G$-bundles over $B$ 
modulo isomorphism of principal $G$-bundles.
Let
\[
  \theta:[B,BG] \to \Prin_G^D(B)
\]
be defined by $[f] \mapsto f^*(\pi:EG \to BG)$.
This is well-defined by Corollary~\ref{cor:homotopy-pullback}.
\end{de}

The final goal of this section is to prove that $\theta$ is a bijection for every $B$.
We break the proof into two propositions.

\begin{prop}\label{prop:theta-injective}
The map $\theta:[B,BG] \to \Prin_G^D(B)$ is injective.
\end{prop}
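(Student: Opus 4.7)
The plan is to prove injectivity by showing that if $f_0, f_1 : B \to BG$ satisfy $f_0^*(\pi) \cong f_1^*(\pi)$ as principal $G$-bundles over $B$, then $f_0$ and $f_1$ are smoothly homotopic. The main engine will be Proposition~\ref{prop:EG-subterminal}, which says any two $G$-equivariant maps into $EG$ are $G$-equivariantly homotopic; the argument will then consist of producing two such maps and descending the resulting homotopy along a principal bundle quotient.

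First, I would unpack the hypothesis. Write $p_i : E_i \to B$ for $f_i^*(\pi)$ and let $\tilde f_i : E_i \to EG$ denote the canonical $G$-equivariant smooth map covering $f_i$ that is part of the pullback data. If $\phi : E_0 \to E_1$ is an isomorphism of principal $G$-bundles over $B$, then $\phi$ is smooth and $G$-equivariant, so $\tilde f_0$ and $\tilde f_1 \circ \phi$ are two $G$-equivariant smooth maps $E_0 \to EG$ covering $f_0$ and $f_1$ respectively. Applying Proposition~\ref{prop:EG-subterminal} to these two maps yields a smooth $G$-equivariant homotopy $H : E_0 \times \R \to EG$ between them.

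The final step is to push $H$ down to a homotopy between $f_0$ and $f_1$. Composing $H$ with the quotient $\pi : EG \to BG$ gives a $G$-invariant smooth map $E_0 \times \R \to BG$ (with $G$ acting trivially on $\R$). Now $E_0 \times \R \to B \times \R$ is the pullback of $p_0 : E_0 \to B$ along the projection $B \times \R \to B$, hence by Proposition~\ref{prop:pullback} it is a principal $G$-bundle and in particular a subduction; therefore the $G$-invariant composite factors through a smooth map $F : B \times \R \to BG$. Evaluating at $t=0$ and $t=1$ gives $F(\cdot,0) = f_0$ and $F(\cdot,1) = f_1$, witnessing $[f_0] = [f_1]$ in $[B, BG]$. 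I expect the argument to be short once Proposition~\ref{prop:EG-subterminal} is invoked; the only point that warrants care is the descent step, which rests entirely on the fact that $E_0 \times \R \to B \times \R$ is a principal $G$-bundle and hence a quotient map in $\Diff$.
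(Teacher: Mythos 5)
Your proposal is correct and follows essentially the same route as the paper: obtain two $G$-equivariant maps into $EG$ covering $f_0$ and $f_1$, apply Proposition~\ref{prop:EG-subterminal} to get a $G$-equivariant homotopy, and descend it to a homotopy $f_0 \simeq f_1$ on the base. The only difference is that you spell out the descent step (via the subduction $E_0 \times \R \to B \times \R$, justified by Proposition~\ref{prop:pullback} and Theorem~\ref{thm:principal}) that the paper summarizes as ``by $G$-equivariance, $H$ induces a smooth homotopy,'' which is a welcome elaboration rather than a divergence.
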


\begin{proof}
Let $f_0,f_1:B \to BG$ be smooth maps such that $f_0^*(\pi:EG \to BG)$ and $f_1^*(\pi:EG \to BG)$ are isomorphic principal 
$G$-bundles over $B$. 
Say they are isomorphic to the principal $G$-bundle $p:E \to B$. 
Then there exist smooth maps $h_0,h_1:E \to EG$ making the following diagrams commutative:
\[
\begin{minipage}[b]{0.5\linewidth}
\xymatrix{E \ar[r]^-{h_0} \ar[d]_p & EG \ar[d]^\pi \\ B \ar[r]_-{f_0} & BG}
\end{minipage}
\hspace{2cm}
\begin{minipage}[b]{0.5\linewidth}
\xymatrix{E \ar[r]^-{h_1} \ar[d]_p & EG \ar[d]^\pi \\ B \ar[r]_-{f_1} & BG.}
\end{minipage}
\]
By Proposition~\ref{prop:EG-subterminal}, there is a smooth $G$-equivariant homotopy $H$
between $h_0$ and $h_1$.
By $G$-equivariance, $H$ induces a smooth homotopy $f_0 \simeq f_1$.
\end{proof} 

\begin{prop}\label{prop:theta-surjective}
The map $\theta:[B,BG] \to \Prin_G^D(B)$ is surjective.
\end{prop}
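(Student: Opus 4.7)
My plan is to construct, for any $D$-numerable principal $G$-bundle $p : E \to B$, a smooth $G$-equivariant map $h : E \to EG$. Since $p$ is a subduction, $h$ then descends to a smooth map $f : B \to BG$ fitting in a commutative square with $\pi$, and Proposition~\ref{prop:commsq=>pullbackdiff} yields $p \cong f^*(\pi)$, so $\theta([f]) = [p]$.

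First I would invoke Proposition~\ref{prop:countable} to assume the trivializing data is a countable smooth partition of unity $\{\mu_n : B \to \R\}_{n \in \N}$ subordinate to a locally finite $D$-open cover $\{B_n\}_{n \in \N}$ with $p$ trivial over each $B_n$. Countability is essential so that the data matches the $\N$-indexed coordinates of $\Delta^\omega$ and $\prod_\omega G$. For each $n$ I fix a $G$-equivariant trivialization of $p^{-1}(B_n) \cong B_n \times G$ and let $\psi_n : p^{-1}(B_n) \to G$ be its second coordinate, so $\psi_n(x \cdot g) = \psi_n(x)\, g$. I then define
\[
h(x) = [(\mu_n(p(x)))_{n \in \N},\, (g_n(x))_{n \in \N}]_{EG},
\]
where $g_n(x) = \psi_n(x)$ if $p(x) \in B_n$ and $g_n(x) = e$ otherwise. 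Since $\mu_n(p(x)) > 0$ forces $p(x) \in \supp(\mu_n) \subseteq B_n$, the equivalence relation defining $EG$ makes $h$ well-defined, and $h$ is manifestly $G$-equivariant.

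The main obstacle is smoothness of $h$, which I would verify locally, exploiting both the local finiteness of $\{\supp(\mu_n)\}$ and the subordination $\supp(\mu_n) \subseteq B_n$. Given $x_0 \in E$ and $b_0 = p(x_0)$, local finiteness supplies a $D$-open $V \ni b_0$ meeting only finitely many $\supp(\mu_n)$, indexed by $n_1, \ldots, n_k$. Let $S = \{i : b_0 \in \supp(\mu_{n_i})\}$. For $i \in S$ we have $b_0 \in B_{n_i}$, while for $i \notin S$ I can shrink $V$ to avoid $\supp(\mu_{n_i})$, and then intersect with each $B_{n_i}$ for $i \in S$ to obtain a $D$-open $W \ni b_0$ on which only the $\mu_{n_i}$ with $i \in S$ are nonzero and each corresponding $\psi_{n_i}$ is defined and smooth on all of $p^{-1}(W)$. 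On $p^{-1}(W)$, $h$ then admits a smooth lift through $\Delta^m \times G^{m+1} \to EG$ for some $m$ (placing $\psi_{n_i}$ in the appropriate slots and $e$ elsewhere, since the values of $g_n$ with $\mu_n(p(x)) = 0$ are irrelevant to the equivalence class), and by Remark~\ref{rem:EG_n} this shows $h$ is smooth. Passage to the quotient then delivers the desired $f : B \to BG$, completing the surjectivity argument.
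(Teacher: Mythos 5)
Your proposal is correct and follows essentially the same route as the paper's proof: invoke Proposition~\ref{prop:countable}, define the Milnor-type map $x \mapsto [\mu_n(p(x)),\psi_n(x)]$ with $e$ filled in where the trivialization is undefined, verify smoothness locally via local finiteness and subordination, and conclude with Proposition~\ref{prop:commsq=>pullbackdiff}. The only difference is minor bookkeeping in the smoothness check (shrinking a neighbourhood away from the irrelevant supports versus the paper's explicit set $E_x$), which amounts to the same argument.
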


\begin{proof}
Let $p:E \to B$ be a $D$-numerable principal $G$-bundle. By Proposition~\ref{prop:commsq=>pullbackdiff}, it is enough 
to show that there exist a $G$-equivariant smooth map $f:E \to EG$ and a smooth map $g:B \to BG$ making the following diagram 
commutative:
\[
\xymatrix{E \ar[d]_p \ar[r]^f & EG \ar[d]^\pi \\ B \ar[r]_g & BG.}
\]
By Proposition~\ref{prop:countable}, there exists a countable smooth partition of unity $\{\tau_n:B \to \R\}_{n \in \N}$
subordinate to a locally finite $D$-open cover $\{B_n\}_{n \in \N}$ of $B$ such that 
$p:p^{-1}(B_n) \to B_n$ is trivial for each $n$. 
Let $h_n:B_n \times G \to p^{-1}(B_n)$ be a $G$-equivariant trivialization over $B_n$, 
and let $q_n:B_n \times G \to G$ be the projection. 
Define $f:E \to EG$ by $x \mapsto [\tau_i(p(x)),q_i(h_i^{-1}(x))]$. 
Note that whenever $h_i^{-1}(x)$ is undefined, $\tau_i(p(x)) = 0$, and we define $q_i(h_i^{-1}(x)) = e$.
Hence, $f$ is well-defined. It is easy to check that $f$ is $G$-equivariant, and therefore induces a function 
$g:B \to BG$ making the required square commutative. So we are left to show that $f$ is smooth. 
Since $\{p^{-1}(B_n)\}_{n \in \N}$ is a locally finite $D$-open cover of $E$, for every $x \in E$, 
there exists a $D$-open subset $V$ of $x$ in $E$ such that $V$ only intersects $p^{-1}(B_{i_1}),\ldots,p^{-1}(B_{i_s})$ 
for a finite subset $I_V := \{i_1,\ldots,i_s\} \subset \N$. Then $I_V$ is a disjoint union of $I_V'$ and $I_V''$ with 
$x \in p^{-1}(B_i)$ for every $i \in I_V'$ and $x \notin p^{-1}(B_j)$ for every $j \in I_V''$. 
Then $E_x := V \cap (\cap_{i \in I_V'} p^{-1}(B_i)) \cap (\cap_{j \in I_V''} E \setminus p^{-1}(\supp(\tau_j)))$ is a 
$D$-open neighborhood of $x$ in $E$. By definition of $f$ and $EG$, it is clear that $f|_{E_x}$ is smooth, 
and therefore $f$ is smooth.
\end{proof}

In summary, we have proved:

\begin{thm}\label{thm:classify-principal}
For any diffeological space $B$ and any diffeological group $G$, the map $\theta : [B,BG] \to \Prin_G^D(B)$ 
is a bijection which is natural in $B$.
\end{thm}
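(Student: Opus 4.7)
The plan is to observe that all the substantive work has already been done in the two immediately preceding propositions, so the proof is essentially an assembly. First I would note that $\theta$ has already been constructed and shown to be well-defined using Corollary~\ref{cor:homotopy-pullback}. Bijectivity is then a direct consequence: injectivity is Proposition~\ref{prop:theta-injective} (whose engine is the $G$-equivariant subterminality of $EG$ proved in Proposition~\ref{prop:EG-subterminal}, allowing one to $G$-equivariantly homotope any two lifts $h_0, h_1 : E \to EG$ of a fixed bundle $p : E \to B$ to each other and descend the homotopy to the base), and surjectivity is Proposition~\ref{prop:theta-surjective} (which uses the countable partition of unity from Proposition~\ref{prop:countable} to write down an explicit $G$-equivariant classifying map $x \mapsto [\tau_i(p(x)), q_i(h_i^{-1}(x))]$ and then invokes Proposition~\ref{prop:commsq=>pullbackdiff} to recognize $p$ as a pullback).

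Next I would verify naturality in $B$. For a smooth map $\phi : B' \to B$, I would check that the square
\[
\xymatrix{[B, BG] \ar[r]^-\theta \ar[d]_{\phi^*} & \Prin_G^D(B) \ar[d]^{\phi^*} \\ [B', BG] \ar[r]_-\theta & \Prin_G^D(B')}
\]
commutes. Both composites applied to $[f]$ yield, respectively, $\phi^*(f^*(\pi))$ and $(f \circ \phi)^*(\pi)$, and these are canonically isomorphic as principal $G$-bundles over $B'$ by functoriality of pullback (Proposition~\ref{prop:pullback}). The vertical maps are well-defined: $\phi^*$ on $[B, BG]$ respects smooth homotopy since pre-composition with $\phi \times 1_\R$ of a smooth homotopy is smooth, and $\phi^*$ on $\Prin_G^D$ lands in $\Prin_G^D$ by Lemma~\ref{lem:pullback-D-numerable-G}.

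There is essentially no obstacle at this stage; the real difficulties have been resolved earlier, most notably in Proposition~\ref{prop:EG-subterminal} (which drives injectivity) and in the chain Proposition~\ref{prop:functional-testing-zero} $\Rightarrow$ Proposition~\ref{prop:Dpb-on-product} $\Rightarrow$ Corollary~\ref{cor:homotopy-pullback} (which ensures that $\theta$ is well-defined on homotopy classes in the first place). The final theorem is therefore a one-line consequence: combine Propositions~\ref{prop:theta-injective} and~\ref{prop:theta-surjective} to get bijectivity, and invoke functoriality of pullback to get naturality in $B$.
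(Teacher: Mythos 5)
Your proposal is correct and matches the paper's own treatment: the theorem is stated as a summary of Propositions~\ref{prop:theta-injective} and~\ref{prop:theta-surjective}, with well-definedness of $\theta$ coming from Corollary~\ref{cor:homotopy-pullback} and naturality in $B$ from functoriality of pullback. Your identification of where the real work lies (Proposition~\ref{prop:EG-subterminal} and the chain through Proposition~\ref{prop:Dpb-on-product}) is also accurate.
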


The naturality of $\theta$ with respect to $G$ will be explained in Theorem~\ref{thm:naturality-G} in the next section.

\begin{ex}
For any smoothly contractible diffeological space $B$, the only $D$-numerable principal bundle over $B$ is the trivial bundle. 
For example, this applies when $B$ is an indiscrete diffeological space or a diffeological vector space.
\end{ex}

As an immediate consequence of the above theorem, we have:

\begin{cor}
Classifying spaces are unique up to smooth homotopy, in the sense that if a diffeological space $X$ has the property that
there is a bijection $[B,X] \to \Prin_G^D(B)$ which is natural in $B$, then $X$ is smoothly homotopy equivalent to $BG$.
\end{cor}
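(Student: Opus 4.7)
The plan is to prove this by a Yoneda-type argument: both $[-,X]$ and $[-,BG]$ represent the same functor $\Prin_G^D(-)$ on the smooth homotopy category of diffeological spaces, so they must be represented by objects that are isomorphic in that category, i.e.\ smoothly homotopy equivalent. The explicit inverse equivalences can be extracted by chasing the identity elements through the natural bijections.

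In detail, let $\alpha_B : [B,X] \to \Prin_G^D(B)$ be the hypothesized natural bijection, and let $\theta_B : [B,BG] \to \Prin_G^D(B)$ be the bijection of Theorem~\ref{thm:classify-principal}. Composing gives a natural bijection $\Phi_B := \theta_B^{-1} \circ \alpha_B : [B,X] \to [B,BG]$. Apply $\Phi$ at $B = X$ to the class of the identity to obtain a smooth map $\phi : X \to BG$ representing $\Phi_X([1_X])$. For any smooth $b : B \to X$, naturality of $\Phi$ applied to the smooth map $b$ yields the square
\[
\xymatrix{[X,X] \ar[r]^-{\Phi_X} \ar[d]_{b^*} & [X,BG] \ar[d]^{b^*} \\ [B,X] \ar[r]_-{\Phi_B} & [B,BG]}
\]
which, evaluated at $[1_X]$, gives $\Phi_B([b]) = [\phi \circ b]$. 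So $\Phi_B$ is postcomposition with $[\phi]$.

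Applying the same argument to $\Phi^{-1}$ at $B = BG$, one obtains a smooth map $\psi : BG \to X$ representing $\Phi^{-1}_{BG}([1_{BG}])$ such that $\Phi^{-1}_B$ is postcomposition with $[\psi]$. Combining these two formulas,
\[
[\phi \circ \psi] \;=\; \Phi_{BG}([\psi]) \;=\; \Phi_{BG}\bigl(\Phi^{-1}_{BG}([1_{BG}])\bigr) \;=\; [1_{BG}],
\]
and symmetrically $[\psi \circ \phi] = [1_X]$. Hence $\phi$ and $\psi$ are mutually inverse smooth homotopy equivalences between $X$ and $BG$.

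There is really no serious obstacle here: the argument is a direct application of the Yoneda lemma in the smooth homotopy category. The only mild subtlety worth checking is that naturality of $\alpha$ (stated with respect to smooth maps $B' \to B$) does pass to the homotopy category, which is automatic because $[-,X]$ and $[-,BG]$ send smoothly homotopic maps to equal set-functions; this is precisely what makes the Yoneda extraction of $\phi$ and $\psi$ from the identity classes work.
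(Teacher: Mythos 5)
Your argument is correct: it is precisely the standard Yoneda-style extraction of mutually inverse classifying maps from the two natural bijections, which is exactly what the paper intends when it states the corollary as an immediate consequence of Theorem~\ref{thm:classify-principal} (the paper gives no further proof). The only point the paper emphasizes that you implicitly rely on is that the classification holds over \emph{all} diffeological spaces, so that the bijections can be evaluated at $B = X$ and $B = BG$ themselves; your proof does this correctly.
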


Note that this corollary uses the fact that we classify certain bundles over \emph{all} diffeological spaces. 
We use this to calculate some examples of $BG$:

\begin{prop}\label{prop:dvs-mod-G}
Let $V$ be a diffeological vector space, and let $G$ be an additive subgroup. Assume that the principal bundle 
$V \to V/G$ is $D$-numerable. Then $BG$ is smoothly homotopy equivalent to $V/G$. In particular,
$B V$ is smoothly contractible and
$B \Z^n$ is smoothly homotopy equivalent to $T^n = (S^1)^n$.
\end{prop}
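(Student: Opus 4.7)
The plan is to construct explicit smooth maps $c : V/G \to BG$ and $\bar\psi : BG \to V/G$ and show that they are mutual inverses up to smooth homotopy. Applying Theorem~\ref{thm:classify-principal} to the $D$-numerable principal $G$-bundle $V \to V/G$ produces a classifying map $c : V/G \to BG$, covered by a $G$-equivariant smooth map $\tilde c : V \to EG$. For a map in the other direction, I would define $\psi : EG \to V$ by $\psi([t_i, g_i]) = \sum_i t_i g_i$, using the inclusion $G \hookrightarrow V$ as an additive subgroup. This is well-defined on equivalence classes because $t_i g_i = 0$ whenever $t_i = 0$, and it is $G$-equivariant since $\psi([t_i, g_i + g]) = \sum_i t_i(g_i + g) = \sum_i t_i g_i + \bigl(\sum_i t_i\bigr) g = \psi([t_i, g_i]) + g$. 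Smoothness of $\psi$ follows from Remark~\ref{rem:EG_n}: any plot of $EG$ locally lifts through the natural map $\Delta^n \times G^{n+1} \to EG$ for some $n$, and post-composing with $((t_0,\ldots,t_n),(g_0,\ldots,g_n)) \mapsto \sum_{i=0}^n t_i g_i$ is smooth into the diffeological vector space $V$. Hence $\psi$ descends to a smooth map $\bar\psi : BG \to V/G$.

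To check $c \circ \bar\psi \simeq \mathrm{id}_{BG}$, the lift $\tilde c \circ \psi : EG \to EG$ is $G$-equivariant, and Proposition~\ref{prop:EG-subterminal} yields a smooth $G$-equivariant homotopy $\tilde c \circ \psi \simeq \mathrm{id}_{EG}$, which descends to the base. For $\bar\psi \circ c \simeq \mathrm{id}_{V/G}$, the lift $\psi \circ \tilde c : V \to V$ is $G$-equivariant, and since $G$ acts by translation on the diffeological vector space $V$, the smooth convex combination $H(v, t) = (1 - \rho(t))\,(\psi \circ \tilde c)(v) + \rho(t)\, v$, for a smooth cutoff $\rho : \R \to [0,1]$ with $\rho(t) = 0$ for $t \leq 0$ and $\rho(t) = 1$ for $t \geq 1$, is $G$-equivariant (using $(1-\rho(t)) + \rho(t) = 1$) and descends to a smooth homotopy on $V/G$. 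Thus $c$ is a smooth homotopy equivalence.

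The two special cases follow easily: taking $G = V$ acting on itself by translation, $V/V$ is a point and the bundle $V \to \{*\}$ is trivially $D$-numerable, so $BV$ is smoothly contractible. For $V = \R^n$ and $G = \Z^n$, one has $V/G = T^n$; since $T^n$ is a smooth manifold, the covering map $\R^n \to T^n$ is locally trivial, hence $D$-numerable by the earlier example, giving $B\Z^n \simeq T^n$. The main subtlety in the general argument is verifying smoothness of $\psi$ despite its being defined by an a priori infinite sum; this relies crucially on Remark~\ref{rem:EG_n}, which says plots of $EG$ locally factor through finite-dimensional pieces.
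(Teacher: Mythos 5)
Your proposal is correct and follows essentially the same route as the paper: a classifying map $V \to EG$ from Theorem~\ref{thm:classify-principal}, the map $EG \to V$ sending $[t_i,g_i]$ to $\sum_i t_i g_i$, Proposition~\ref{prop:EG-subterminal} for one composite, and the affine ($G$-equivariant, translation-compatible) homotopy on $V$ for the other, with both descending to the quotients. The special cases are handled exactly as in the paper as well.
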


\begin{proof}
By the universality of $EG \to BG$ (Theorem~\ref{thm:classify-principal}), we get a $G$-equivariant smooth map $f:V \to EG$. 
On the other hand, we can define $g:EG \to V$ by sending $[t_i,g_i]$ to $\sum_i t_i g_i$. 
It is not hard to show that $g$ is well-defined, smooth, and $G$-equivariant. 
By Proposition~\ref{prop:EG-subterminal}, we know that $f \circ g$ is $G$-equivariantly smoothly homotopic to $1_{EG}$. 
Since every $G$-equivariant smooth map $h:V \to V$ is $G$-equivariantly smoothly homotopic to $1_V$ via the affine homotopy 
$F(v,t) := t h(v) + (1-t) v$, we know that $g \circ f$ is $G$-equivariantly smoothly homotopic to $1_V$. 
Therefore, $EG$ is $G$-equivariantly smoothly homotopy equivalent to $V$.
It follows that $BG$ is smoothly homotopy equivalent to $V/G$.

Taking $G = V$, we have that $V \to V/G = *$ is a $D$-numerable principal $V$-bundle
and so $B V$ is smoothly homotopy equivalent to a point.
To see that $B \Z^n$ is smoothly homotopy equivalent to $T^n$,
take $V = \R^n$ and observe that we have a $D$-numerable principal 
$\Z^n$-bundle $\R^n \to \R^n/\Z^n \cong T^n$. 
\end{proof}

\begin{rem}
More generally, consider any additive subgroup $G$ of a diffeological vector space $V$.
As described in the proof of Proposition~\ref{prop:dvs-mod-G}, there
is a smooth, $G$-equivariant map $g : EG \to V$.
The map $g$ induces a smooth map $h : BG \to V/G$, and
by Proposition~\ref{prop:commsq=>pullbackdiff}, we know that $h^*(p)$ is
isomorphic to $EG \to BG$ as principal $G$-bundles over $BG$, 
where $p:V \to V/G$ is the projection.
It follows that every $D$-numerable principal bundle is isomorphic
to a pullback of $p$.
However, $p$ itself may not be $D$-numerable (consider $\R \to \R/\Q$)
and homotopic maps $X \to V/G$ may not give isomorphic pullbacks.
\end{rem}

On $\oplus_\omega \R$, we have a smooth inner product defined by $\langle (x_i),(y_i) \rangle = \sum_i x_i y_i$. 
Let $S^\infty$ be the subspace of $\oplus_\omega \R$ consisting of the elements of norm $1$. 
The discrete multiplicative group $\Z/2 = \{\pm 1\}$ acts on $S^\infty$ by $(x_i) \cdot (-1) = (-x_i)$. 
Write $\R P^\infty$ for the orbit space. 
Identifying $\oplus_{\omega} \R$ with $\oplus_{\omega} \C$, $S^{\infty}$ can also be thought of
as the unit vectors in $\oplus_{\omega} \C$.
Therefore, the Lie group $S^1$ acts on $S^\infty$ by pointwise multiplication. 
Write $\C P^\infty$ for the orbit space.

\begin{prop}\
\begin{enumerate}
\item $B \Z/2$ is smoothly homotopy equivalent to $\R P^\infty$.
\item $B S^1$ is smoothly homotopy equivalent to $\C P^\infty$.
\end{enumerate}
\end{prop}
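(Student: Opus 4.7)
The plan is to follow the template of the previous proposition. View $S^\infty$ as sitting inside $\oplus_\omega \R$ (resp.\ $\oplus_\omega \C$) so that $\Z/2$ (resp.\ $S^1$) acts by scalar multiplication, giving $S^\infty/G = \R P^\infty$ (resp.\ $\C P^\infty$). I aim to show that $S^\infty$ and $EG$ are $G$-equivariantly smoothly homotopy equivalent; taking quotients then yields the result.

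First I would check that $S^\infty \to S^\infty/G$ is a $D$-numerable principal $G$-bundle. Local trivializations come from the sections $B_i \to S^\infty$ that normalize the $i$th coordinate to be a positive real, where $B_i := \{[x_j] \mid |x_i|^2 > 1/2^{i+2}\}$. The $B_i$ cover $S^\infty/G$ (since $\sum |x_j|^2 = 1$), are locally finite (every plot of $S^\infty/G$ factors through finitely many coordinates), and admit a subordinate smooth partition of unity constructed exactly as in the proof of Theorem~\ref{thm:EG->BG-D-numerable-principal}, using a smooth cut-off of $|x_i|^2$. By Theorem~\ref{thm:classify-principal} and Proposition~\ref{prop:commsq=>pullbackdiff}, this produces a $G$-equivariant smooth map $f: S^\infty \to EG$. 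Going the other way, define
\[
  g:EG \to S^\infty, \qquad g([t_i, g_i]) = \frac{(t_i g_i)_i}{\sqrt{\sum_j t_j^2}},
\]
viewing $g_i$ as a unit scalar in $\R$ or $\C$. The product $t_i g_i$ vanishes when $t_i = 0$, making $g$ well-defined, and $|t_i g_i|^2 = t_i^2$, so the image lies in $S^\infty$. Smoothness follows from Remark~\ref{rem:EG_n}, and $G$-equivariance is immediate.

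The composite $f \circ g: EG \to EG$ is $G$-equivariantly smoothly homotopic to $1_{EG}$ by Proposition~\ref{prop:EG-subterminal}. The main obstacle is the other direction, $g \circ f \simeq 1_{S^\infty}$: the affine trick of the previous proposition fails because $S^\infty$ is not convex. To overcome this I will establish a sphere analog of Proposition~\ref{prop:EG-subterminal}, namely that any two $G$-equivariant smooth maps $h_0, h_1: Y \to S^\infty$ are $G$-equivariantly smoothly homotopic. Applying this analog with $h_0 = g \circ f$ and $h_1 = 1_{S^\infty}$ then finishes the argument.

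To prove the sphere analog, I mimic the proof of Proposition~\ref{prop:EG-subterminal}. Let $S^\infty_{\mathrm{ev}}$ (resp.\ $S^\infty_{\mathrm{odd}}$) consist of those $(x_i) \in S^\infty$ whose odd- (resp.\ even-) indexed coordinates vanish. Construct a $G$-equivariant smooth homotopy $H^{\mathrm{ev}}: S^\infty \times \R \to S^\infty$ from $1_{S^\infty}$ to $\alpha_{\mathrm{ev}}(x_0, x_1, \ldots) := (x_0, 0, x_1, 0, \ldots)$, defined piecewise on the intervals $[1/(n+1), 1/n)$ by replacing $x_{n+j}$ with the pair $(\cos(\alpha(t)\pi/2)\, x_{n+j},\, \sin(\alpha(t)\pi/2)\, x_{n+j})$; this preserves $\sum x_i^2 = 1$ thanks to $\cos^2 + \sin^2 = 1$, is smooth by the same argument as for $H^{\od}$, and is $G$-equivariant since everything is linear in $x$. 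Constructing the analogous $H^{\mathrm{odd}}$ ending at $\alpha_{\mathrm{odd}}$, we reduce $h_0$ and $h_1$ to maps $h_0', h_1'$ landing in the mutually orthogonal subspaces $\alpha_{\mathrm{ev}}(S^\infty)$ and $\alpha_{\mathrm{odd}}(S^\infty)$. The formula $(s, y) \mapsto \cos(s\pi/2)\, h_0'(y) + \sin(s\pi/2)\, h_1'(y)$ is then automatically a unit vector by orthogonality, is $G$-equivariant, and smoothly connects $h_0'$ and $h_1'$; concatenating the three homotopies yields the required equivariant homotopy from $h_0$ to $h_1$, and hence the proposition.
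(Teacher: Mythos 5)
Your proposal is correct and follows essentially the same route as the paper: establish $D$-numerability of $S^\infty \to S^\infty/G$, obtain $f$ by universality, use the same explicit $g([t_i,g_i]) = (t_i g_i)/\sqrt{\sum_j t_j^2}$, handle $f \circ g$ via Proposition~\ref{prop:EG-subterminal}, and handle $g \circ f$ by even/odd coordinate-spreading homotopies with a $\cos$/$\sin$ interpolation. The only cosmetic difference is that you package the last step as a general ``sphere analog'' of Proposition~\ref{prop:EG-subterminal}, while the paper argues directly that any equivariant $h$ satisfies $h \simeq i_{\ev} \circ h \simeq i_{\od} \simeq 1_{S^\infty}$ --- the same argument in slightly different clothing.
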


\begin{proof}
(1) We first show that the quotient map $p:S^\infty \to \R P^\infty$ is a $D$-numerable principal $\Z/2$-bundle. 
Let $U_j := \{[x_i] \in \R P^\infty \mid |x_j| > 1/(2j+2)\}$. Then $\{U_j\}_{j \in \omega}$ is a $D$-open cover 
of $\R P^\infty$. Define $\mu_j:\R P^\infty \to \R$ by 
\[
\mu_j([x_i]) = \begin{cases} \exp(\frac{-1}{|x_j| - 1/(2j+2)}), & \textrm{if $|x_j| > 1/(2j+2)$,} \\ 0, & \textrm{else.} \end{cases}
\] 
Then $\mu_j$ is smooth, and $\mu_j^{-1}((0,\infty)) = U_j$. 
By an argument similar to the proof of Theorem~\ref{thm:EG->BG-D-numerable-principal},
one can show that $\{U_j\}_{j \in \omega}$ is locally finite. 
By Lemma~\ref{lem:Bourbaki}, there is a smooth partition of unity subordinate to $\{U_j\}_{j \in \omega}$.
It is straightforward to check that $p|_{U_j}$ is trivial for each $j$. 
Therefore, $p:S^\infty \to \R P^\infty$ is a $D$-numerable principal $\Z/2$-bundle. 

By the universality of $E \Z/2 \to B \Z/2$, we have a $\Z/2$-equivariant smooth map $f:S^\infty \to E \Z/2$. 
Define $g:E \Z/2 \to S^\infty$ by 
\[
g([t_i,g_i]) = \Bigg( \frac{g_i t_i}{\sqrt{\sum_j t_j^2}} \Bigg).
\]
It is smooth and $\Z/2$-equivariant. By Proposition~\ref{prop:EG-subterminal}, we know that $f \circ g$ is
$\Z/2$-equivariantly smoothly homotopic to $1_{E(\Z/2)}$. 

Next we show that $1_{S^\infty}$ is $\Z/2$-equivariantly smoothly homotopic to both $i_{\ev}$ and $i_{\od}$. 
Here $i_{\ev}:S^\infty \to S^\infty$ sends $(x_i)$ to $(y_i)$ with $y_{2i} = x_i$ and $y_{2i+1}=0$, 
and similarly, $i_{\od}:S^\infty \to S^\infty$ sends $(x_i)$ to $(y_i)$ with $y_{2i+1} = x_i$ and $y_{2i}=0$. 
We show $1_{S^\infty} \simeq_{\Z/2} i_{\ev}$ below, and the other case is similar. 

Fix a smooth function $\rho:\R \to \R$ such that there exists $\epsilon > 0$ with $\rho(t) = 0$ if $t < \epsilon$, 
$\rho(t) = \pi/2$ if $t > 1 - \epsilon$, and $\im(\rho) = [0,\pi/2]$.
Define $H:S^\infty \times \R \to S^\infty$ by sending $((x_i),t)$ to $(y_i)$ given as follows.
When $t \leq 0$, $y_i = x_i$. When $t \in [1/(n+1),1/n)$, 
\[
y_i = \begin{cases} x_i, & \textrm{if $i < n$,} \\ \cos(\alpha_n(t)) \, x_{n+j}, & \textrm{if $i = n+2j$ for $j \in \N$,} \\ 
                    \sin(\alpha_n(t)) \, x_{n+j}, & \textrm{if $i = n+2j+1$ for $j \in \N$,} \end{cases}
\]
where
\[
\alpha_n(t) = \rho \left( \frac{t - \frac{1}{n+1}}{\frac{1}{n} - \frac{1}{n+1}} \right).
\]
When $t \geq 1$, $y_{2i} = x_i$ and $y_{2i+1} = 0$. Since $H$ is smooth and $\Z/2$-equivariant, 
we have $1_{S^\infty} \simeq_{\Z/2} i_{\ev}$. 

Given any $\Z/2$-equivariant smooth map $h:S^\infty \to S^\infty$, define $K:S^\infty \times \R \to S^\infty$ 
by sending $(x,t)$ to $\cos(\rho(t)) \, i_{\ev}(h(x)) + \sin(\rho(t)) \, i_{\od}(x)$. 
Since $K$ is smooth and $\Z/2$-equivariant, we have $i_{\ev} \circ h \simeq_{\Z/2} i_{\od}$. 
So we have $h \simeq_{\Z/2} i_{\ev} \circ h \simeq_{\Z/2} i_{\od} \simeq_{\Z/2} 1_{S^\infty}$. 
Hence, $g \circ f$ is $\Z/2$-equivariantly smoothly homotopic to $1_{S^\infty}$. 

Therefore, $B \Z/2$ is smoothly homotopy equivalent to $\R P^\infty$.

(2) This can be proved similarly, by considering the $D$-numerable principal $S^1$-bundle $S^\infty \to \C P^\infty$.
\end{proof}

We also have:

\begin{prop}
Let $G$ and $H$ be diffeological groups.
Then $B(G \times H)$ and $BG \times BH$ are smoothly homotopy equivalent.
\end{prop}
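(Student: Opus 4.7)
The plan is to invoke the uniqueness corollary following Theorem~\ref{thm:classify-principal}: it suffices to show that $BG \times BH$ is also a classifying space for $D$-numerable principal $(G \times H)$-bundles, i.e., to produce a bijection $[B,BG \times BH] \to \Prin^D_{G \times H}(B)$ which is natural in $B$. I would build this bijection as a composite of three natural bijections, each of which is essentially formal.

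First, I would use the universal property of the product in $\Diff$ (together with the observation that a smooth homotopy into $BG \times BH$ is the same as a pair of smooth homotopies, which follows because $\Diff$ is cartesian closed and $[0,1]$ or $\R$ is a cylinder object for both factors) to obtain a natural bijection $[B, BG \times BH] \cong [B,BG] \times [B,BH]$. Then, applying Theorem~\ref{thm:classify-principal} separately for $G$ and for $H$, this becomes $\Prin^D_G(B) \times \Prin^D_H(B)$ naturally in $B$.

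The substantive step is constructing a natural bijection $\Prin^D_G(B) \times \Prin^D_H(B) \to \Prin^D_{G \times H}(B)$. In one direction, I would send a pair $(\pi_G\colon E_G \to B,\, \pi_H\colon E_H \to B)$ to the fiber product $E_G \times_B E_H \to B$, equipped with the evident coordinatewise right action of $G \times H$. To see this is a principal $(G \times H)$-bundle I would use Theorem~\ref{thm:principal}: the induction property of the action map is inherited from the product of the two induction maps for $E_G$ and $E_H$ (fiber products and inductions interact well, and the diffeomorphism $B \cong E/(G\times H)$ is immediate). For $D$-numerability, given partitions of unity $\{\mu_i\}$ subordinate to a trivializing cover for $\pi_G$ and $\{\nu_j\}$ subordinate to a trivializing cover for $\pi_H$, the functions $\{\mu_i \nu_j\}$ form a smooth partition of unity (local finiteness of the support sets is stable under products, using Lemma~\ref{lem:Bourbaki} if needed to tidy things up) subordinate to a cover on which the fiber product bundle is manifestly trivial. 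In the other direction, I would send a principal $(G \times H)$-bundle $E \to B$ to the pair $(E/(\{e\} \times H),\, E/(G \times \{e\}))$; each is a principal bundle for the complementary factor via Proposition~\ref{prop:G->G/H-principal-bundle} combined with Theorem~\ref{thm:principal}, and $D$-numerability descends since the same partitions of unity on $B$ witness it. That these two constructions are mutually inverse follows because the canonical map $E \to (E/H) \times_B (E/G)$ is a $(G \times H)$-equivariant smooth bijection over $B$, and the free action forces it to be a diffeomorphism (one can check this using the induction characterization of the action map in Theorem~\ref{thm:principal}). Naturality in $B$ is immediate, since pullback along $f\colon B' \to B$ commutes with fiber products and with quotients by subgroup actions.

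Finally, composing the three natural bijections yields the required natural bijection $[B, BG \times BH] \to \Prin^D_{G \times H}(B)$. The uniqueness corollary following Theorem~\ref{thm:classify-principal} then gives a smooth homotopy equivalence $BG \times BH \simeq B(G \times H)$. The main obstacle I anticipate is the bookkeeping in Step~3: checking that quotients of $E$ by the two subgroups $G \times \{e\}$ and $\{e\} \times H$ are themselves $D$-numerable principal bundles with the expected trivializations, and that the canonical comparison map is a diffeomorphism rather than merely a smooth bijection; once these are verified, the classification machinery does all the remaining work.
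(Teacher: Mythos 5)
Your proposal is correct, but it takes a genuinely different route from the paper. The paper works entirely at the level of total spaces: it writes down an explicit $(G\times H)$-equivariant map $E(G\times H)\to EG\times EH$, checks (via products of partitions of unity, including the subtlety that the $D$-topology of a product need not be the product topology) that $EG\times EH\to BG\times BH$ is a $D$-numerable principal $(G\times H)$-bundle so that universality supplies a map back, and then applies Proposition~\ref{prop:EG-subterminal} (once for $G\times H$, and once factorwise) to conclude that the two composites are equivariantly homotopic to identities; the equivalence of base spaces follows. You instead prove representability: you establish a natural isomorphism of functors $\Prin^D_{G\times H}(-)\cong\Prin^D_G(-)\times\Prin^D_H(-)$ via the fiber-product and quotient constructions, combine it with $[B,BG\times BH]\cong[B,BG]\times[B,BH]$ and Theorem~\ref{thm:classify-principal}, and invoke the uniqueness corollary following that theorem. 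What your route buys is a statement of independent interest (the splitting of $\Prin^D_{G\times H}$) and it never needs $EG\times EH\to BG\times BH$ to be a bundle at all, so the product-partition-of-unity issue on $BG\times BH$ disappears (your partitions are multiplied on the single base $B$). What it costs is the bundle-theoretic bookkeeping you flag: that quotients by $\{e\}\times H$ and $G\times\{e\}$ commute with pullbacks along plots and along arbitrary smooth maps, that these quotients are principal bundles for the complementary factor (here Proposition~\ref{prop:G->G/H-principal-bundle} is not really the right citation, since it concerns $G\to G/H$ for groups; the honest argument is plot-wise local triviality or the induction criterion of Theorem~\ref{thm:principal}), and that the comparison map $E\to(E/(\{e\}\times H))\times_B(E/(G\times\{e\}))$ is an isomorphism, which is cleanest to get from Proposition~\ref{prop:commsq=>pullbackdiff} once both sides are known to be principal $(G\times H)$-bundles over $B$, rather than from freeness alone. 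The paper's proof, by contrast, also yields the stronger explicit statement that $E(G\times H)$ and $EG\times EH$ are $(G\times H)$-equivariantly homotopy equivalent.
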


\begin{proof}
There is a natural $(G \times H)$-equivariant smooth map $g : E(G \times H) \to EG \times EH$
defined by sending $[t_i, (g_i, h_i)]$ to $([t_i, g_i], [t_i, h_i])$.

The $D$-topology of a product is not the same as the product of the $D$-topologies
in general.  Nevertheless, if $U$ is $D$-open in $BG$ and $V$ is $D$-open in $BH$,
then $U \times V$ is $D$-open in $BG \times BH$.
Moreover, if $\{ \sigma_i \}_{i \in I}$ and $\{ \tau_j \}_{j \in J}$ are
smooth partitions of unity for $BG$ and $BH$ respectively, then
$\{ \rho_{ij} \}_{(i,j) \in I \times J}$ is a partition of unity for $BG \times BH$,
where $\rho_{ij}(x,y) := \sigma_i(x) \tau_j(y)$.
It follows that $EG \times EH \to BG \times BH$ is a $D$-numerable $(G \times H)$-principal bundle.
Therefore, we have a $(G \times H)$-equivariant smooth map $f : EG \times EH \to E(G \times H)$.

By Proposition~\ref{prop:EG-subterminal}, we know that $f \circ g$ is
$(G \times H)$-equivariantly smoothly homotopic to $1_{E(G \times H)}$.

If $X$ has a $(G \times H)$-action, then a $(G \times H)$-equivariant map
$X \to EG \times EH$ is the same as a $G$-equivariant map $X \to EG$
and an $H$-equivariant map $X \to EH$. 
Therefore, using Proposition~\ref{prop:EG-subterminal} on each factor,
we conclude that any two $(G \times H)$-equivariant maps $X \to EG \times EH$
are $(G \times H)$-equivariantly smoothly homotopic to each other.
In particular, $g \circ f$ is $(G \times H)$-equivariantly smoothly homotopic
to $1_{EG \times EH}$.

The claim follows.
\end{proof}

\section{Classifying $D$-numerable diffeological bundles}
\label{se:classify-bundle}

For diffeological spaces $B$ and $F$, write $\Bun_F(B)$ (resp.\ $\Bun_F^D(B)$) for the set of isomorphism classes of 
all (resp.\ $D$-numerable) diffeological bundles over $B$ with fiber $F$. 
This is a functor of $B$ under pullback of bundles.

It was shown in~\cite[8.16]{I2} that given a principal $G$-bundle $r:E \to B$ and a diffeological 
space $F$ with a left $G$-action, we can form an associated diffeological bundle $t:E \times_G F \to B$ with fiber $F$.
Here $E \times_G F := (E \times F)/{\sim}$, where $(y,f) \sim (y \cdot g,\, g^{-1} \cdot f)$ for all $g \in G$, and $t([y,f]) = r(y)$. 
Moreover, if $r$ is trivial (as a principal $G$-bundle), then so is $t$ (as a diffeological bundle).
This gives a natural transformation $\assoc : \Prin_G(B) \to \Bun_F(B)$ that depends on $F$ and the $G$-action,
and sends $D$-numerable bundles to $D$-numerable bundles.

On the other hand, it was shown in~\cite[8.14]{I2} that given a diffeological bundle $\pi:E \to B$ with fiber $F$,
there exists an associated principal $\Diff(F)$-bundle $s:E' \to B$ which we call the \dfn{frame bundle}.
As a set, $E' = \coprod_{b \in B} \Diff(F_b,F)$, where $F_b = \pi^{-1}(b)$ and $\Diff(F_b,F)$ consists of all diffeomorphisms $F_b \to F$.
The map $s$ sends $f:F_b \to F$ to $b$.
We equip $E'$ with the following diffeology: $p:U \to E'$ is a plot if and only if
all of the following conditions hold:
\begin{enumerate}
\item $s \circ p:U \to B$ is smooth;
\item $\{(u,y) \in U \times E \mid s(p(u)) = \pi(y)\} \to F$ sending $(u,y)$ to $p(u)(y)$ is smooth;
\item $U \times F \to E$ sending $(u,x)$ to $(p(u))^{-1}(x)$ is smooth. 
\end{enumerate}
The action $E' \times \Diff(F) \to E'$ is given by $(f,g) \mapsto g^{-1} \circ f$. 
Moreover, by~\cite[8.16]{I2}, if $\pi$ is trivial (as a diffeological bundle), then so is $s$ (as a principal 
$\Diff(F)$-bundle).
This gives a natural transformation $\frme : \Bun_F(B) \to \Prin_{\Diff(F)}(B)$ that
sends $D$-numerable bundles to $D$-numerable bundles.

In~\cite[8.16]{I2} it is shown that $\assoc \circ \frme$ is the identity, where
to define $\assoc$ we use the natural action of $\Diff(F)$ on $F$.
That is, if we start with a diffeological bundle $\pi:E \to B$ with fiber $F$,
form the associated principal $\Diff(F)$-bundle, and then take the associated $F$-bundle,
we get a bundle isomorphic to $\pi$.

In fact, these operations are inverse to each other:

\begin{thm}\label{thm:bijection-bundlevsprincipal}
We have a natural isomorphism $\assoc : \Prin_{\Diff(F)}(B) \to \Bun_F(B)$
which restricts to a natural isomorphism $\assoc : \Prin_{\Diff(F)}^D(B) \to \Bun_F^D(B)$.
\end{thm}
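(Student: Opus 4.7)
The plan is to exhibit $\assoc$ and $\frme$ as mutually inverse natural transformations. Since the paper has recorded that $\assoc \circ \frme$ is naturally isomorphic to the identity on $\Bun_F(B)$, the task reduces to showing that $\frme \circ \assoc$ is naturally isomorphic to the identity on $\Prin_{\Diff(F)}(B)$.

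Given a principal $\Diff(F)$-bundle $r : E \to B$, let $t : E \times_{\Diff(F)} F \to B$ be its associated diffeological $F$-bundle and let $s : E' \to B$ be the frame bundle of $t$. I would construct a canonical $\Diff(F)$-equivariant smooth map $\Phi : E \to E'$ over $B$ by declaring, for each $y \in E$, $\Phi(y)$ to be the map $t^{-1}(r(y)) \to F$ sending $[y, f]$ to $f$. Well-definedness rests on the fact that, since $r$ is principal, the $\Diff(F)$-action on $r^{-1}(r(y))$ is free and transitive, so every point of $t^{-1}(r(y))$ has a unique representative of the form $[y, f]$; hence $\Phi(y)$ is a bijection with inverse $f \mapsto [y, f]$. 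Equivariance follows from
\[
\Phi(y \cdot g)([y \cdot g, f]) = f = g^{-1}(g \cdot f) = (g^{-1} \circ \Phi(y))([y \cdot g, f]),
\]
so $\Phi(y \cdot g) = g^{-1} \circ \Phi(y)$, matching the $\Diff(F)$-action on the frame bundle.

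For smoothness of $\Phi$, I would verify the three plot conditions defining the frame-bundle diffeology, applied to $\Phi \circ q$ for an arbitrary plot $q : U \to E$. Condition (1) reduces to smoothness of $s \circ \Phi \circ q = r \circ q$. Condition (3), smoothness of $(u, x) \mapsto \Phi(q(u))^{-1}(x) = [q(u), x]$, is the composite of $q \times 1_F$ with the quotient map $E \times F \to E \times_{\Diff(F)} F$. Condition (2), the smoothness of the evaluation map $(u, y') \mapsto \Phi(q(u))(y')$, is the key step: I would locally lift $y' = [z, f]$ to a smooth pair $(z, f) \in E \times F$ with $r \circ z = r \circ q$, then invoke the induction characterization of principal bundles in Theorem~\ref{thm:principal} to produce a smooth $g : V \to \Diff(F)$ with $z = q \cdot g$, so that the evaluation becomes $g \cdot f$, smooth by smoothness of the $\Diff(F)$-action on $F$.

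With $\Phi$ in hand, Proposition~\ref{prop:commsq=>pullbackdiff} applied with base map $1_B$ gives $r \cong s$ as principal $\Diff(F)$-bundles, providing the natural inverse to $\assoc$. The restriction to $D$-numerable classes is then automatic, since both $\assoc$ and $\frme$ have been noted to preserve $D$-numerability. The main obstacle I anticipate is condition (2), where the smooth dependence on $y'$ genuinely requires the induction property rather than a direct global formula.
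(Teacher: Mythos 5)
Your proposal is correct and takes essentially the same approach as the paper: the same reduction to showing $\frme \circ \assoc$ is the identity via the same canonical $\Diff(F)$-equivariant map $E \to E'$ over $B$ (your $\Phi$ coincides with the paper's $\alpha$, since $\Phi(y)([x,f]) = \theta(f)$ when $x = y\cdot\theta$), with the $D$-numerable statement following because $\assoc$ and $\frme$ preserve $D$-numerability. Your explicit verification of the frame-bundle plot conditions, using the induction from Theorem~\ref{thm:principal} for condition (2), simply fills in details the paper leaves as ``not hard to check.''
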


\begin{proof}
We saw that $\assoc \circ \frme$ is the identity.
To check that $\frme \circ \assoc$ is the identity, we start with a principal $\Diff(F)$-bundle $r:E \to B$
and show it is isomorphic to the frame bundle $s: E' \to B$ of
the associated bundle $t : E \times_{\Diff(F)} F \to B$.
It is enough to construct a $\Diff(F)$-equivariant smooth map $\alpha:E \to E'$ such that 
$s \circ \alpha = r$.
For $y \in E$ we define $\alpha(y) : t^{-1}(r(y)) \to F$ by sending $[x,f]$ in $E \times_{\Diff(F)} F$
to $\theta(f)$, where $\theta$ is the unique element of $\Diff(F)$ such that $x = y \cdot \theta$.
Such a $\theta$ exists because $r(x) = r(y)$, and $\alpha(y)$ is well-defined because
$[x \cdot \phi,\, \phi^{-1}(f)]$ is sent to $(\theta \phi)(\phi^{-1}(f)) = \theta(f)$ as well.
It is then not hard to check that $\alpha(y)$ 
is a diffeomorphism for each $y \in E$, and that $\alpha$ is $\Diff(F)$-equivariant and smooth. 

The last claim follows from the fact that both $\assoc$ and $\frme$ preserve $D$-numerable bundles.
\end{proof}

Combining this result with Theorem~\ref{thm:classify-principal}, we get:

\begin{thm}\label{thm:classify-diff-bundles}
There is a bijection $[B,B\Diff(F)] \to \Bun_F^D(B)$ which is natural in $B$.
\end{thm}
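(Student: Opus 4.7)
The plan is straightforward: simply compose the two bijections that have already been established. First, I would instantiate Theorem~\ref{thm:classify-principal} at the diffeological group $G = \Diff(F)$, producing a natural bijection
\[
\theta : [B, B\Diff(F)] \to \Prin_{\Diff(F)}^D(B), \qquad [f] \mapsto f^*(\pi : E\Diff(F) \to B\Diff(F)).
\]
Second, by Theorem~\ref{thm:bijection-bundlevsprincipal}, the associated bundle construction provides a natural bijection $\assoc : \Prin_{\Diff(F)}^D(B) \to \Bun_F^D(B)$. The composite $\assoc \circ \theta$ is then the desired bijection $[B, B\Diff(F)] \to \Bun_F^D(B)$.

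For naturality in $B$, given a smooth map $u : B' \to B$, the naturality of $\theta$ says that pullback of principal $\Diff(F)$-bundles along $u$ intertwines with precomposition $[u, B\Diff(F)]$ on the left, and the naturality of $\assoc$ says that pullback commutes with forming the associated fiber bundle. Composing the two naturality squares gives the single naturality square required here.

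The main obstacle has already been overcome in the preceding two theorems: Theorem~\ref{thm:classify-principal} required constructing $EG \to BG$, showing it is $D$-numerable, and using Proposition~\ref{prop:EG-subterminal} to establish both injectivity and surjectivity of $\theta$; Theorem~\ref{thm:bijection-bundlevsprincipal} required the frame bundle construction and verifying that $\frme$ and $\assoc$ are mutually inverse while preserving $D$-numerability. No new technical work is needed for the present statement beyond recording the composition.
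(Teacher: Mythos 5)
Your proposal is correct and matches the paper's argument exactly: the theorem is obtained by composing the natural bijection of Theorem~\ref{thm:classify-principal} (with $G = \Diff(F)$) with the natural isomorphism $\assoc$ of Theorem~\ref{thm:bijection-bundlevsprincipal}, with naturality in $B$ following from the naturality of each factor.
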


Using the techniques from this section, we can also show that the bijection 
in Theorem~\ref{thm:classify-principal} is natural with respect to the diffeological group.

\begin{de}[Functoriality of $\Prin_G(B)$]
Let $h:G \to G'$ be a smooth homomorphism between diffeological groups. 
Define a left action of $G$ on $G'$ by $g \cdot g' := h(g) g'$. 
Given a principal $G$-bundle $E \to B$, we can form the associated diffeological
bundle $E \times_G G' \to B$ with fiber $G'$.
We can define a right action of $G'$ on $E \times_G G'$ by $[x, g'] \cdot \hat{g}' := [x, g' \hat{g}']$.
One can check that this is a principal $G'$-bundle, and that this defines
a function $h_* : \Prin_G(B) \to \Prin_{G'}(B)$ making $\Prin_G(B)$ into a functor of $G$.
Moreover, if $E \to B$ is $D$-numerable, then so is $E' \to B$, 
so we see that $\Prin_G^D(B)$ is also functorial in $G$.
\end{de}

\begin{thm}\label{thm:naturality-G}
The bijection $\theta : [B, BG] \to \Prin_G^D(B)$ from Theorem~\ref{thm:classify-principal}
is natural in $G$.
That is, for any smooth homomorphism $h:G \to G'$ between diffeological groups,
the following diagram commutes:
\[
\xymatrix{[B,BG] \ar[r]^-\theta \ar[d]_{Bh_*} & \Prin_G^D(B) \ar[d]^{h_*} \\ [B,BG'] \ar[r]_-\theta & \Prin_{G'}^D(B).}
\]
\end{thm}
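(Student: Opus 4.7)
The plan is to exhibit a canonical bundle isomorphism witnessing commutativity of the square, rather than chasing equivalence classes abstractly. By the functoriality of $E$ and $B$ noted just after their definition, the smooth homomorphism $h : G \to G'$ induces smooth maps $Eh : EG \to EG'$ and $Bh : BG \to BG'$, both given on representatives by $[t_i, g_i] \mapsto [t_i, h(g_i)]$. Directly from this formula, $Eh$ satisfies the twisted equivariance $Eh(y \cdot g) = Eh(y) \cdot h(g)$ and $\pi_{G'} \circ Eh = Bh \circ \pi_G$, which are the two identities that will drive the whole argument.

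Fix a class $[f] \in [B, BG]$ and let $p : E \to B$ denote the pullback bundle $f^*(\pi_G)$ with canonical $G$-equivariant lift $\tilde{f} : E \to EG$ covering $f$. Going down then right sends $[f]$ to the associated principal $G'$-bundle $E \times_G G' \to B$, while going right then down sends $[f]$ to $(Bh \circ f)^*(\pi_{G'})$. By Proposition~\ref{prop:commsq=>pullbackdiff}, to identify these bundles it suffices to exhibit a $G'$-equivariant smooth map $\Phi : E \times_G G' \to EG'$ fitting in a square over $Bh \circ f$. I propose
\[
\Phi([x, g']) := Eh(\tilde{f}(x)) \cdot g'.
\]
Well-definedness reduces to the computation $Eh(\tilde{f}(x \cdot g)) \cdot h(g)^{-1} g' = Eh(\tilde{f}(x)) \cdot h(g) h(g)^{-1} g' = Eh(\tilde{f}(x)) \cdot g'$, using equivariance of $\tilde{f}$ and the twisted equivariance of $Eh$. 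Smoothness of $\Phi$ follows because the quotient map $E \times G' \to E \times_G G'$ is a subduction and the unreduced composite $(x, g') \mapsto Eh(\tilde{f}(x)) \cdot g'$ is smooth. The $G'$-equivariance of $\Phi$ is immediate, and commutativity of the square over $Bh \circ f$ comes from $\pi_{G'} \circ Eh = Bh \circ \pi_G$ combined with $\pi_G \circ \tilde{f} = f \circ p$.

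Altogether, the argument is essentially a bookkeeping exercise in the functoriality of the construction $G \mapsto (EG \to BG)$, with no deep input beyond Proposition~\ref{prop:commsq=>pullbackdiff}. The only step requiring care is keeping straight the three distinct actions in play—the right $G$-action on $E$, the left $G$-action on $G'$ used to define $E \times_G G'$, and the right $G'$-action on both $EG'$ and $E \times_G G'$—so that the well-definedness calculation for $\Phi$ chains together correctly. I do not anticipate any genuine obstacle.
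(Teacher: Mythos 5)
Your proof is correct, and its core construction coincides with the paper's: your map $\Phi([x,g']) = Eh(\tilde f(x))\cdot g'$ is exactly the paper's comparison map $EG\times_G G'\to EG'$, $([t_i,g_i],g')\mapsto [t_i,h(g_i)g']$, precomposed with the canonical lift $\tilde f$, and both arguments conclude with Proposition~\ref{prop:commsq=>pullbackdiff}. The difference is organizational: the paper first treats the universal case $f=1_{BG}$ and then deduces the general case formally, using functoriality of pullback together with naturality of $\assoc$ with respect to pullback (the isomorphism $(B\times_{BG}EG)\times_G G'\cong B\times_{BG}(EG\times_G G')$), whereas you build the comparison map directly over an arbitrary classifying map $f$, so that naturality step is absorbed into a single well-definedness and smoothness check for $\Phi$ (via the subduction $E\times G'\to E\times_G G'$). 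This buys a slightly more self-contained argument, at the cost of redoing by hand what the two-step reduction handles formally; both routes rely on the fact, recorded in the paper's definition of $h_*$, that $E\times_G G'\to B$ is a principal $G'$-bundle, which is needed for Proposition~\ref{prop:commsq=>pullbackdiff} to apply. One small slip: you swapped which composite of the square yields $E\times_G G'$ and which yields $(Bh\circ f)^*(\pi_{G'})$ (right-then-down is $h_*\circ\theta$, giving the associated bundle), but since your argument identifies the two bundles, this does not affect correctness.
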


\begin{proof}
We first consider the universal case, where $B = BG$ and we start with
the identity map $BG \to BG$.
Define a map $EG \times G' \to EG'$ by sending $([t_i, g_i], g')$
to $[t_i, h(g_i) g']$, and notice that this is well-defined on the
associated principal $G'$-bundle $EG \times_G G'$.
It is also $G'$-equivariant, and makes the square
\[
  \xymatrix{
    EG \times_G G' \ar[d] \ar[r] & EG' \ar[d] \\
    BG \ar[r]_{Bh}              & BG' 
  }
\]
commute.  Thus, by Proposition~\ref{prop:commsq=>pullbackdiff}, it is a pullback
square, as required.

Now, given a map $f : B \to BG$, we compute the pullback of $EG'$ along
the composite $B \to BG \to BG'$ as
\begin{align*}
  B \times_{BG'} EG' &\cong B \times_{BG} (BG \times_{BG'} EG') & \text{(by functoriality of pullback)}\\
                    &\cong B \times_{BG} (EG \times_G G') & \text{(by the previous paragraph)} \\
                    &\cong (B \times_{BG} EG) \times_G G' & \text{(by naturality of $\assoc$)\rlap{,}}
\end{align*}
which shows that the square commutes.
\end{proof}

\section{Classifying $D$-numerable vector bundles}
\label{se:classify-vb}

We first recall the following definition from~\cite{CW2}:

\begin{de}
Let $B$ be a diffeological space. A \dfn{diffeological vector space over $B$} is a diffeological space $E$, a smooth 
map $\pi:E \to B$ and a vector space structure on each of the fibers $\pi^{-1}(b)$ such that the addition 
$E \times_B E \to E$, the scalar multiplication $\R \times E \to E$ and the zero section $B \to E$ are all smooth. 
\end{de}

In the case when $B$ is a point, we recover the concept of diffeological vector space. 
More generally, for any $b \in B$, $\pi^{-1}(b)$ equipped with the sub-diffeology of $E$ is a diffeological vector space.

\begin{lem}
Let $\pi:E \to B$ be a diffeological vector space over $B$, and let $f:B' \to B$ be a smooth map. 
Then the pullback $f^*(\pi)$ is a diffeological vector space over $B'$.
\end{lem}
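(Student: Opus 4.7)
The plan is a routine diagram chase using the universal property of the pullback. Concretely, realize
\[
  f^*(E) \;=\; B' \times_B E \;=\; \{(b',x) \in B' \times E \mid f(b') = \pi(x)\}
\]
with the sub-diffeology of $B' \times E$, and define $f^*(\pi)(b',x) = b'$. The fiber $(f^*(\pi))^{-1}(b')$ is canonically identified with $\pi^{-1}(f(b'))$, and we transport the vector space structure via this identification, i.e.\ $(b',x)+(b',y):=(b',x+y)$, $r\cdot(b',x):=(b',rx)$, and the zero section sends $b'$ to $(b',0_{f(b')})$.

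Next I would check smoothness of the three structure maps. For addition, note that
\[
  f^*(E) \times_{B'} f^*(E) \;\cong\; B' \times_B (E \times_B E),
\]
and the candidate addition map is $(b',x,y) \mapsto (b',x+y)$. Smoothness into $B' \times E$ follows because the first coordinate is the projection (hence smooth) and the second coordinate is the composite with the addition $E\times_B E \to E$, which is smooth by hypothesis; the image lands in the sub-diffeological subspace $f^*(E)$ since $f(b') = \pi(x) = \pi(y) = \pi(x+y)$. Scalar multiplication and the zero section are handled identically: $(r,(b',x))\mapsto(b',rx)$ and $b'\mapsto(b',0_{f(b')})$ are smooth into $B'\times E$ by pairing the obvious smooth maps to $B'$ with the smooth structure maps of $\pi$, and land in $f^*(E)$ by compatibility with $f$ and $\pi$.

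Finally, smoothness of a map into $f^*(E)$ is by definition the same as smoothness of the composite into $B' \times E$, so each verification above actually establishes smoothness as a map into $f^*(E)$ equipped with the sub-diffeology. No step is really an obstacle; the only thing to keep track of is the identification of $f^*(E) \times_{B'} f^*(E)$ with $B' \times_B (E \times_B E)$, which is immediate from the definition of fibered products of sets and the fact that limits in $\Diff$ are computed on underlying sets with the evident diffeologies.
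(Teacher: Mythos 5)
Your proposal is correct: the paper dismisses this lemma with ``This is straightforward,'' and your argument is precisely the routine verification being alluded to — realize $f^*(E)$ as $B'\times_B E$ with the sub-diffeology, transport the fiberwise vector space structure, and check smoothness of the three structure maps componentwise via the structure maps of $\pi$. Nothing is missing and no different route is taken.
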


\begin{proof}
This is straightforward.
\end{proof}

\begin{de}
Let $V$ be a diffeological vector space. A diffeological vector space $\pi:E \to B$ over $B$ is called \dfn{trivial of fiber type $V$} if there exists 
a diffeomorphism $h:E \to B \times V$ over $B$, such that for every $b \in B$, the restriction $h|_b:\pi^{-1}(b) \to V$ is an isomorphism of 
diffeological vector spaces. 

A diffeological vector space $\pi:E \to B$ over $B$ is called \dfn{locally trivial of fiber type $V$} if there exists a $D$-open cover $\{B_i\}$ of $B$ 
such that each restriction $\pi|_{B_i}:\pi^{-1}(B_i) \to B_i$ is trivial of fiber type $V$.

A diffeological vector space $\pi:E \to B$ over $B$ is called a \dfn{vector bundle of fiber type $V$} if the pullback along every plot of $B$ is 
locally trivial of fiber type $V$.  
\end{de}

\begin{de}
Let $V$ be a diffeological vector space. A vector bundle $\pi:E \to B$ of fiber type $V$ is called \dfn{$D$-numerable} if 
there exists a smooth partition of unity subordinate to a $D$-open cover $\{B_i\}_{i \in I}$ 
of $B$ such that each $\pi|_{B_i}$ is trivial of fiber type $V$. 
\end{de}

Let $V$ be a diffeological vector space and let $\GL(V)$ be the set of all linear isomorphisms $V \to V$ equipped with the 
sub-diffeology of $\Diff(V)$. Then $\GL(V)$ is a diffeological group. Let $G$ be a diffeological group. A \dfn{(left) linear $G$-action on $V$} 
is a smooth group homomorphism $G \to \GL(V)$. Given a principal $G$-bundle $r:E \to B$ and a linear $G$-action on $V$, 
we have an associated diffeological bundle $t:E \times_G V \to B$. 

\begin{lem}
Under the above assumptions, $t:E \times_G V \to B$ is a vector bundle of fiber type $V$.
\end{lem}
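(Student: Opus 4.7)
The plan is to verify the two conditions defining a vector bundle: that $t : E \times_G V \to B$ carries the structure of a diffeological vector space over $B$, and that its pullback along every plot of $B$ is locally trivial of fiber type $V$.

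First I would put a vector space structure on each fiber. Given $[x_1, v_1]$ and $[x_2, v_2]$ in $t^{-1}(b)$, the $G$-action on $r^{-1}(b)$ is free and transitive, so there is a unique $g \in G$ with $x_2 = x_1 \cdot g$, and hence $[x_2, v_2] = [x_1, g \cdot v_2]$. I define
\[
[x_1, v_1] + [x_2, v_2] := [x_1, v_1 + g \cdot v_2], \quad c \cdot [x,v] := [x, cv], \quad 0_b := [x, 0].
\]
Independence of representatives reduces to a short verification using the linearity of $G \to \GL(V)$; e.g.\ replacing $x_1$ by $x_1 \cdot h$ replaces $v_1$ by $h^{-1}\!\cdot v_1$ and $g$ by $h^{-1} g$, and both sides of the sum transform compatibly.

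Next I would check smoothness of addition, scalar multiplication, and the zero section by reducing to the trivial principal bundle case, using the canonical identification $q^*(E \times_G V) \cong (q^*E) \times_G V$ for any plot $q: U \to B$. Since $q^*E \to U$ is locally trivial as a principal $G$-bundle, over each trivializing open $U_\alpha \subseteq U$ with section $\sigma_\alpha$ one has a diffeomorphism
\[
\Phi_\alpha : ((q^*E) \times_G V)|_{U_\alpha} \xrightarrow{\;\cong\;} U_\alpha \times V, \qquad [\sigma_\alpha(u) \cdot g,\, v] \longmapsto (u, g \cdot v),
\]
with smooth inverse $(u,w) \mapsto [\sigma_\alpha(u), w]$. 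A direct computation shows that $\Phi_\alpha$ is linear on each fiber and converts the fiberwise operations of $E \times_G V$ into the usual pointwise operations on $U_\alpha \times V$. Since every plot of $(E \times_G V) \times_B (E \times_G V)$, of $\R \times (E \times_G V)$, or of $B$ locally factors through such a trivialization of the pullback along its projection to $B$, the sheaf condition upgrades the evident local smoothness to global smoothness of the three structure maps.

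Finally, the same $\Phi_\alpha$ exhibit local triviality of $q^*t$ as a vector bundle of fiber type $V$, which is the remaining condition. The main obstacle is ensuring that $\Phi_\alpha$ is simultaneously fiberwise linear and bi-smooth; both properties fall out cleanly from the smoothness and linearity of the homomorphism $G \to \GL(V)$, after which the rest is careful bookkeeping and appeals to the sheaf condition.
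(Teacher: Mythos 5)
Your proposal is correct and follows essentially the same route as the paper: you define the fiberwise addition $[x_1,v_1]+[x_2,v_2]=[x_1,v_1+g\cdot v_2]$, scalar multiplication $[x,\alpha v]$, and zero section exactly as the paper does, and then verify smoothness and local triviality by reducing to trivializations of the principal bundle pulled back along plots. The only difference is that you spell out the verification (the explicit maps $\Phi_\alpha$ and their fiberwise linearity) that the paper dismisses as straightforward, relying on the fact that a trivial principal $G$-bundle yields a trivial associated bundle.
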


\begin{proof}
We make $E \times_G V \to B$ into a diffeological vector space over $B$ using the following maps.
The addition map
\[
  (E \times_G V) \times_B (E \times_G V) \to E \times_G V
\]
sends $([x,v],[x',v'])$ to $[x,v + g \cdot v']$,
where $g \in G$ is chosen so that $x' = x \cdot g$, which is possible since $r(x)=r(x')$.
The scalar multiplication map
\[
  \R \times (E \times_G V) \to E \times_G V
\]
sends $(\alpha, [x, v])$ to $[x, \alpha v]$.
And the zero section
\[
  B \to E \times_G V
\]
sends $b$ to $[x, 0]$, where $x$ is any element of $\pi^{-1}(b)$.
It is straightforward to check that these maps are all smooth and make
$t:E \times_G V \to B$ into a diffeological vector space over $B$,
and that $t$ is a vector bundle.
\end{proof}

Write $\VB_V(B)$ (resp.\ $\VB_V^D(B)$) for the set of isomorphism classes 
of (resp.\ $D$-numerable) vector bundles over $B$. Therefore, we have a 
natural transformation $\assoc:\Prin_G(B) \to \VB_V(B)$ that depends on the diffeological 
vector space $V$ and the linear $G$-action, and sends $D$-numerable bundles to 
$D$-numerable bundles.

On the other hand, given a vector bundle $\pi:E \to B$ of fiber type $V$, 
let $E''=\coprod_{b \in B} \Isom(\pi^{-1}(b),V)$ be equipped with the sub-diffeology of $E'$ defined in 
Section~\ref{se:classify-bundle}, where $\Isom(\pi^{-1}(b),V)$ denotes the set of all isomorphisms 
$\pi^{-1}(b) \to V$ of diffeological vector spaces. So we have a composite of smooth maps
$E'' \hookrightarrow E' \to B$, denoted by $s$, which sends each $f : \pi^{-1}(b) \to V$ to $b$.

\begin{lem}
Under the above assumptions, $s:E'' \to B$ is a principal $\GL(V)$-bundle.
\end{lem}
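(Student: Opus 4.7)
The plan is to verify the hypotheses of the converse part of Theorem~\ref{thm:principal}: that $\GL(V)$ acts smoothly on $E''$, that the action map $a: E'' \times \GL(V) \to E'' \times E''$, $(f, g) \mapsto (f, g^{-1} \circ f)$, is an induction, and that $s$ exhibits $B$ as the orbit space $E''/\GL(V)$. The overall strategy is to deduce these conditions by restricting from the analogous structure on the frame bundle $E' \to B$, which is already known to be a principal $\Diff(V)$-bundle by~\cite[8.14]{I2}.

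First, the smooth $\Diff(V)$-action on $E'$ restricts to a smooth $\GL(V)$-action on $E''$, since the composition of linear isomorphisms is again a linear isomorphism and both $E''$ and $\GL(V)$ carry sub-diffeologies; moreover this action is free and preserved by $s$. By Theorem~\ref{thm:principal} applied to $E' \to B$, the map $E' \times \Diff(V) \to E' \times E'$ sending $(f, g)$ to $(f, g^{-1} \circ f)$ is an induction. Its restriction to $E'' \times \GL(V) \to E'' \times E''$ is then also an induction, since plots of a subspace equipped with the sub-diffeology are exactly those plots of the ambient space that land in the subspace, on both the source and target.

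It remains to show that the induced bijection $E''/\GL(V) \to B$ is a diffeomorphism, equivalently that $s$ is a subduction. This is where the vector bundle hypothesis is essential (the corresponding lift in $E'$ would only produce a diffeomorphism, not a linear isomorphism). Given a plot $q: U \to B$, the pullback $q^*(\pi)$ is a vector bundle of fiber type $V$ over $U$, hence locally trivial as a vector bundle on some $D$-open cover $\{U_\alpha\}$ of $U$. Each linear trivialization $(q^*E)|_{U_\alpha} \cong U_\alpha \times V$ determines, for every $u \in U_\alpha$, a linear isomorphism $\pi^{-1}(q(u)) \to V$, yielding a set-theoretic section $\sigma_\alpha: U_\alpha \to E''$ of $s$ lifting $q|_{U_\alpha}$. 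The main technical point is to verify that $\sigma_\alpha$ is a plot of $E''$, which reduces to checking the three conditions defining the diffeology on $E'$ recalled in Section~\ref{se:classify-bundle}; all three follow directly from the smoothness of the linear trivialization and its inverse. Thus $s$ is a subduction, $E''/\GL(V) \cong B$ as diffeological spaces, and the converse of Theorem~\ref{thm:principal} delivers the desired conclusion that $s: E'' \to B$ is a principal $\GL(V)$-bundle.
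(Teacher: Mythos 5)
Your proposal is correct and follows essentially the same route as the paper: deduce that the $\GL(V)$-action map is an induction by restricting the corresponding induction for the frame bundle $E'\to B$, and then show $s$ is a subduction by lifting a plot of $B$ through a fiberwise-linear local trivialization, which is exactly where the vector bundle hypothesis enters. No substantive differences to report.
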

\begin{proof} 
It is easy to see that there is a commutative square
\[
\xymatrix{E'' \times \GL(V) \ar[d] \ar[r]^-{a''} & E'' \times E'' \ar[d] \\ E' \times \Diff(V) \ar[r]_-{a'} & E' \times E',}
\]
where the vertical maps are inclusions and the horizontal ones are the action maps 
as in Theorem~\ref{thm:principal}. Since all the other maps in the square are inductions, 
so is $a''$. Therefore, we have a commutative triangle 
\[
\xymatrix@C5pt{& E'' \ar[dl]_q \ar[dr]^s \\ X \ar[rr] && B,}
\]
where $X$ is the orbit space of $E''$ under the $GL(V)$-action, the quotient map $q$ is a principal bundle,
and the horizontal map is a smooth bijection. 
We will show that this horizontal map is a diffeomorphism, and for this
it is enough to show that $s:E'' \to B$ is a subduction. Let $p:U \to B$ be an arbitrary plot. 
Since $\pi:E \to B$ is a vector bundle, without loss of generality, we may assume that there is a diffeomorphism $\alpha:U \times V \to 
\{(u,x) \in U \times E \mid p(u)=\pi(x)\}$ over $U$ such that for each $u \in U$, the restriction 
$\alpha_u:V \to \pi^{-1}(p(u))$ is an isomorphism of diffeological vector spaces. 
It is then easy to check that $\hat{\alpha}:U \to E''$ defined by $\hat{\alpha}(u) := \alpha_u^{-1}$ is smooth.
This gives a commutative triangle 
\[
\xymatrix{& E'' \ar[d]^s \\ U \ar[ur]^{\hat{\alpha}} \ar[r]_p & B,}
\]
which implies that $s$ is a subduction.
\end{proof}

Therefore, we have a natural transformation $\frme:\VB_V(B) \to \Prin_{\GL(V)}(B)$ that sends 
$D$-numerable bundles to $D$-numerable bundles.

\begin{thm}\label{thm:classify-vb}
We have a natural isomorphism $\assoc:\Prin_{\GL(V)}(B) \to \VB_V(B)$ which restricts to a natural isomorphism 
$\assoc:\Prin_{\GL(V)}^D(B) \to \VB_V^D(B)$.
\end{thm}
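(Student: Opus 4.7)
The plan is to mirror the proof of Theorem~\ref{thm:bijection-bundlevsprincipal} in the linear setting. Since both $\assoc$ and $\frme$ preserve $D$-numerability by construction, it suffices to show that they are mutually inverse natural transformations at the level of all bundles; the restricted statement then follows automatically.

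First, I would show that $\assoc \circ \frme$ is the identity on $\VB_V(B)$. Given a vector bundle $\pi:E \to B$ of fiber type $V$, its frame bundle $s:E'' \to B$ carries the principal $\GL(V)$-action $(f,g) \mapsto g^{-1} \circ f$ inherited from $E'$. I would define a map $\Phi : E'' \times_{\GL(V)} V \to E$ over $B$ by $[f,v] \mapsto f^{-1}(v)$. Well-definedness follows from the computation $[g^{-1} \circ f, g^{-1}(v)] \mapsto (g^{-1} \circ f)^{-1}(g^{-1}(v)) = f^{-1}(v)$. Smoothness of $\Phi$ follows from condition (3) in the definition of the diffeology on $E'$ (which restricts to $E''$), and $\Phi$ restricts on each fiber to a linear isomorphism $V \to \pi^{-1}(b)$, since $f$ itself is a linear isomorphism. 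This produces a vector bundle isomorphism over $B$, as required.

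Second, I would show that $\frme \circ \assoc$ is the identity on $\Prin_{\GL(V)}(B)$. Given a principal $\GL(V)$-bundle $r:E \to B$, the associated vector bundle is $t : E \times_{\GL(V)} V \to B$. Following the construction in Theorem~\ref{thm:bijection-bundlevsprincipal}, for $y \in E$ I would define $\alpha(y) : t^{-1}(r(y)) \to V$ by sending $[x,v]$ to $\theta(v)$, where $\theta \in \GL(V)$ is the unique element with $x = y \cdot \theta$. Well-definedness is verified just as in the $\Diff(V)$ case. The essential new observation is that $\alpha(y)$ is not only a diffeomorphism but a \emph{linear} isomorphism, because $\theta$ acts linearly on $V$ and the vector space structure on $t^{-1}(r(y))$ is defined, in the preceding lemma, using precisely this $\GL(V)$-action. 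Thus $\alpha$ lands in $(E \times_{\GL(V)} V)''$ rather than only in the larger $\Diff(V)$-frame bundle. The remaining verifications---smoothness and $\GL(V)$-equivariance of $\alpha$, and the fact that $\alpha$ is an isomorphism of principal $\GL(V)$-bundles over $B$---go through exactly as in the reference theorem.

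The main obstacle is the bookkeeping around the vector space structure on $t^{-1}(r(y))$: one must unwind the definitions of addition, scalar multiplication and the zero section given in the preceding lemma, and confirm that $\alpha(y)$ respects all three. Once this is done, linearity of the $\GL(V)$-action on $V$ supplies each compatibility almost mechanically. Naturality in $B$ is inherited from the fact that $\assoc$ and $\frme$ were defined as natural transformations, so no additional work is required there.
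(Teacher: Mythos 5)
Your overall strategy is the same as the paper's: handle $\frme \circ \assoc = \mathrm{id}$ exactly as in Theorem~\ref{thm:bijection-bundlevsprincipal}, observing that the map $\alpha(y)$ is linear (because the fiberwise vector space structure on $t^{-1}(r(y))$ is defined via the $\GL(V)$-action) and hence lands in $E''$; that direction of your argument is fine. However, there is a genuine gap in the other direction. For $\assoc \circ \frme = \mathrm{id}$ you define $\Phi : E'' \times_{\GL(V)} V \to E$ by $[f,v] \mapsto f^{-1}(v)$, check it is smooth (correctly, via condition (3) of the frame diffeology) and fiberwise a linear isomorphism, and then conclude that it is a vector bundle isomorphism. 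In the diffeological setting this conclusion does not follow: a smooth bijection need not be a diffeomorphism (the identity from a finer to a coarser diffeology is the standard counterexample), and unlike the topological case there is no automatic ``fiberwise linear bijection of locally trivial bundles is an isomorphism'' principle when the fiber is an arbitrary diffeological vector space $V$. You must still show that $\Phi^{-1}$ is smooth.

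This is precisely the step the paper does not skip: it shows that the map $\alpha : E'' \times V \to E$, $(f,v) \mapsto f^{-1}(v)$, is a \emph{subduction}, using the argument from the preceding lemma --- given a plot $p : U \to E$, compose with $\pi$, use a local trivialization of the vector bundle to produce a smooth local section $\hat{\alpha} : U \to E''$ with $\hat{\alpha}(u) = \alpha_u^{-1}$, and lift $p$ locally as $u \mapsto (\hat{\alpha}(u), \hat{\alpha}(u)(p(u)))$. A bijective smooth subduction is a diffeomorphism, so this supplies the smoothness of $\Phi^{-1}$. Your proof becomes complete once you add this subduction (or an equivalent explicit smoothness-of-inverse) argument; everything else, including the remark that $D$-numerability is preserved by both $\assoc$ and $\frme$ and that naturality is inherited, matches the paper.
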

\begin{proof}
The proof that $\frme \circ \assoc$ is the identity is the same as that of Theorem~\ref{thm:bijection-bundlevsprincipal}. 
Now we show that $\assoc \circ \frme$ is the identity. Let $\pi:E \to B$ be a vector bundle with fiber $V$. 
We need to show that $E'' \times_{\GL(V)} V \to B$ and $\pi$ are isomorphic vector bundles over $B$. 
One can check that $\alpha:E'' \times V \to E$ defined by $\alpha(f,v) := f^{-1}(v)$ is smooth.
Therefore, $\alpha$ induces a smooth bijection $\bar{\alpha}$ making the triangle
\[
\xymatrix@C5pt{E'' \times_{\GL(V)} V \ar[rr]^-{\bar{\alpha}} \ar[dr] && E \ar[dl]^\pi \\ & B}
\]
commute. So we are left to show that $\alpha$ is a subduction. 
This follows from the argument used in the proof of the previous lemma.

The last claim follows from the fact that both $\assoc$ and $\frme$ preserve $D$-numerable bundles.
\end{proof}

Combining this result with Theorem~\ref{thm:classify-principal}, we get:

\begin{thm}
There is a bijection $[B,B\GL(V)] \to \VB_V^D(B)$ which is natural in $B$.
\end{thm}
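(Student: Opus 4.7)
The plan is to obtain this bijection as the composite of two previously established natural bijections, so essentially no new work is needed. First I would take $G = \GL(V)$ in Theorem~\ref{thm:classify-principal}, which supplies a bijection
\[
  \theta : [B, B\GL(V)] \longrightarrow \Prin_{\GL(V)}^D(B)
\]
natural in $B$. Here $\GL(V)$ is indeed a diffeological group (it is a subgroup of $\Diff(V)$ with the sub-diffeology), so the hypotheses of that theorem apply directly.

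Next I would invoke Theorem~\ref{thm:classify-vb}, which provides a natural isomorphism
\[
  \assoc : \Prin_{\GL(V)}^D(B) \longrightarrow \VB_V^D(B),
\]
where the $\GL(V)$-action on $V$ used to form associated bundles is the canonical evaluation action $\GL(V) \times V \to V$. Composing $\assoc \circ \theta$ yields the desired bijection $[B, B\GL(V)] \to \VB_V^D(B)$. Naturality in $B$ follows immediately because both $\theta$ and $\assoc$ are natural in $B$: $\theta$ by Theorem~\ref{thm:classify-principal}, and $\assoc$ by Theorem~\ref{thm:classify-vb}.

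There is no real obstacle here; the only thing to verify, if one wishes, is that the composite really sends a homotopy class $[f : B \to B\GL(V)]$ to the isomorphism class of the associated vector bundle $f^{*}(E\GL(V)) \times_{\GL(V)} V \to B$, which is immediate from the definitions of $\theta$ and $\assoc$ together with the naturality of the associated-bundle construction under pullback (used already in Theorem~\ref{thm:naturality-G}). Thus the proof is simply one sentence citing the two previous theorems.
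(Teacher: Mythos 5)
Your proof is correct and is exactly the paper's argument: the theorem is stated there as an immediate consequence of combining Theorem~\ref{thm:classify-vb} with Theorem~\ref{thm:classify-principal}, i.e.\ composing $\assoc$ with $\theta$, with naturality in $B$ inherited from both. Nothing further is needed.
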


\appendix
\section{A smooth function that detects zeros}
\label{se:zeros}

In the proof of Proposition~\ref{prop:Dpb-on-product}, we used the existence of a certain smooth map
motivated by the continuous function $\min : C([0,1], \, \R^{\geq 0}) \to \R^{\geq 0}$.
We will give our smooth replace\-ment in Proposition~\ref{prop:functional-testing-zero}.

We equip $\R^{\geq 0}$ with the sub-diffeology of $\R$, so
$C^\infty(\R,\R^{\geq 0})$ consists of the smooth, non-negative functions $\R \to \R$,
and $C^\infty(\R,\R^{\geq 0})$ has the sub-diffeology of $C^\infty(\R,\R)$. 

We first need some lemmas:

\begin{lem}
 For any $f \in C^\infty(\R, \R^{\geq 0})$ such that $f(x) > 0$ for $x \in [0, 1]$, we have 
 \[
  C \exp \left( -C \int_0^1 \frac{1}{f(x)}dx \right) \leq \min_{x \in [0,1]} f(x),
 \]
 for any positive $C \geq \max_{x \in [0,1]} |f'(x)|$.
\end{lem}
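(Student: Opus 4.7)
The plan is to reduce the inequality to a calculus estimate via the Lipschitz bound implied by $C \geq \max_{x \in [0,1]} |f'(x)|$. Let $m := \min_{x \in [0,1]} f(x)$, which is positive by hypothesis, and let $x_0 \in [0,1]$ be a minimizer so that $f(x_0) = m$. Rewriting the desired inequality by taking logarithms, it suffices to show
\[
  \log(C/m) \,\leq\, C\int_0^1 \frac{1}{f(x)}\, dx.
\]
If $C < m$ this is trivial since the left side is negative while the right side is non-negative, so we may assume $C \geq m$.

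First I would apply the mean value theorem to get $|f(x) - m| \leq C|x - x_0|$ for all $x \in [0,1]$, and in particular $f(x) \leq m + C|x - x_0|$. Inverting and integrating yields
\[
  \int_0^1 \frac{1}{f(x)}\, dx \,\geq\, \int_0^{x_0} \frac{du}{m + Cu} \,+\, \int_0^{1-x_0} \frac{du}{m + Cu} \,=\, \frac{1}{C}\log\frac{(m + Cx_0)(m + C(1-x_0))}{m^2},
\]
after the substitutions $u = x_0 - x$ and $u = x - x_0$.

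The remaining step is to verify the elementary inequality
\[
  (m + Cx_0)(m + C(1-x_0)) \,\geq\, mC
\]
for all $x_0 \in [0,1]$. Expanding the left side gives $m^2 + mC + C^2 x_0(1-x_0)$, which is minimized on $[0,1]$ at the endpoints and equals $m^2 + mC = m(m+C) \geq mC$ there. Thus
\[
  C \int_0^1 \frac{1}{f(x)}\, dx \,\geq\, \log\frac{m(m+C)}{m^2} \,=\, \log\!\left(1 + \tfrac{C}{m}\right) \,\geq\, \log(C/m),
\]
which is the required inequality. No serious obstacle is expected; the only point to watch is the endpoint case $x_0 \in \{0,1\}$, which is precisely where the minimum of the product above occurs and still gives the needed bound.
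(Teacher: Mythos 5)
Your proof is correct and follows essentially the same route as the paper's: the mean value theorem gives $f(x) \leq m + C|x-x_0|$, integrating $1/(m+C|x-x_0|)$ over $[0,1]$ split at $x_0$ yields the logarithmic lower bound, and discarding the $C^2x_0(1-x_0)$ term gives $C\int_0^1 \frac{1}{f} \geq \ln\frac{m+C}{m} \geq \ln\frac{C}{m}$, which is the stated inequality. The only cosmetic difference is that you phrase the final step via logarithms with a (harmless but unnecessary) case split on $C<m$, while the paper exponentiates directly.
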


\begin{proof}
 Write $m$ for $\min_{x \in [0,1]} f(x)$, so  $m = f(x_0)$ for some $x_0 \in [0,1]$. 
 By the mean value theorem, we have 
 \[
  f(x) - m = f(x) - f(x_0) \leq C|x-x_0|
 \]
 for all $x \in [0, 1]$.
 Therefore, 
 \begin{align*}
    \int_0^1 \frac{1}{f(x)} dx & \geq \int_0^1 \frac{1}{m + C|x-x_0|} dx \\
                               & = \int_0^{x_0} \frac{1}{m+C(x_0-x)}dx + \int_{x_0}^1 \frac{1}{m+C(x-x_0)}dx \\
                               & = \frac{1}{C} \ln \frac{m^2 + Cm + C^2 x_0(1-x_0)}{m^2} \\
                               & \geq \frac{1}{C} \ln \frac{m+C}{m}.
 \end{align*}
 So, 
 \[
  m \geq C \exp(-C \int_0^1 \frac{1}{f(x)}dx), 
 \]
 as required.
\end{proof}

As a special case, we obtain:

\begin{lem}\label{lem:less-than-min}
 For $f \in C^\infty(\R^2,\R^{\geq 0})$ and $a < b$, there exists $C > 0$ such that
 \[
  C \exp(-C \int_0^1 \frac{1}{f(t,x)}dx) \leq \min_{x \in [0,1]} f(t,x)
 \]
 for all $t \in [a,b]$ such that $f(t,x) > 0$ for all $x \in [0, 1]$.
\end{lem}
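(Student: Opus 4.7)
The plan is to apply the previous lemma slicewise in $t$, using compactness to upgrade the pointwise constant $C(t)$ to a single constant that works for all $t \in [a,b]$.

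More precisely, I would first observe that for each fixed $t \in [a,b]$, the slice function $g_t : \R \to \R^{\geq 0}$ defined by $g_t(x) := f(t,x)$ lies in $C^\infty(\R, \R^{\geq 0})$, with derivative $g_t'(x) = \partial_x f(t,x)$. By the previous lemma, as soon as $g_t$ is strictly positive on $[0,1]$, the inequality
\[
C \exp\!\left(-C\int_0^1 \frac{1}{g_t(x)}\,dx\right) \leq \min_{x \in [0,1]} g_t(x)
\]
holds for any positive $C \geq \max_{x \in [0,1]} |\partial_x f(t,x)|$.

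Next I would pick a single $C$ that works for every $t \in [a,b]$ simultaneously. Since $\partial_x f$ is continuous on $\R^2$ and $[a,b] \times [0,1]$ is compact, the supremum $M := \max_{(t,x) \in [a,b] \times [0,1]} |\partial_x f(t,x)|$ is finite. Setting $C := M + 1 > 0$ (the $+1$ just guarantees strict positivity in case $f$ happens to be constant in $x$) gives a constant satisfying $C \geq \max_{x \in [0,1]} |\partial_x f(t,x)|$ for every $t \in [a,b]$.

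With this choice of $C$, applying the previous lemma to each $g_t$ for which $f(t,\cdot) > 0$ on $[0,1]$ yields the required inequality. There is no real obstacle here; the point of the lemma is merely to record that the constant $C$ furnished by the previous lemma can be chosen uniformly in a parameter, which is a direct consequence of the compactness of $[a,b] \times [0,1]$ and the continuity of $\partial_x f$.
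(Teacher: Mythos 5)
Your proof is correct and is essentially the paper's own argument: the paper likewise just takes $C \geq \max_{(t,x) \in [a,b] \times [0,1]} \left| \frac{\partial f}{\partial x}(t,x) \right|$, which is finite by compactness, and applies the previous lemma slicewise in $t$. Your extra ``$+1$'' to guarantee positivity of $C$ is a sensible (if minor) refinement of the same idea.
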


\begin{proof}
 By the previous lemma, it suffices to choose
 $C \geq \max_{x \in [0,1],t \in [a,b]} \big|\frac{\partial f}{\partial x}(t,x) \big|$.
\end{proof}

The last inequality we need for our result is:

\begin{lem}\label{lem:greater-than-c/t}
 Let $f \in C^\infty(\R^2,\R^{\geq 0})$ and $a < b$, and assume that
 $f(t_0, x_0) = 0$ for some $t_0 \in [a, b]$ and $x_0 \in [0, 1]$.
 Then there exists $c > 0$ such that
 \[
   \int_0^1 \frac{1}{f(t,x)} \, dx \geq \frac{c}{|t - t_0|}
 \]
 for all $t \in [a, b]$ such that $f(t,x) > 0$ for all $x \in [0,1]$.
\end{lem}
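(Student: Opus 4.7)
The plan is to bound $f$ from above by a quadratic near $(t_0,x_0)$ via Taylor's theorem, and then integrate $1/f$ explicitly on a suitable subinterval.

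Since $f \geq 0$ on all of $\R^2$ and $f(t_0,x_0) = 0$, the point $(t_0,x_0)$ is a global minimum of $f$, so $\nabla f(t_0,x_0) = 0$. This conclusion holds even when $x_0 \in \{0,1\}$, because $f$ is non-negative on the whole plane and not just on the strip we care about. By smoothness and the second-order Taylor expansion at $(t_0,x_0)$, there exist constants $M > 0$ and $\delta \in (0,1/2]$ with
\[
f(t,x) \;\leq\; M\bigl((t-t_0)^2 + (x-x_0)^2\bigr)
\qquad\text{for all } |t-t_0|,\, |x-x_0| \leq \delta.
\]

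For $t$ close to $t_0$, I would restrict the integral to a subinterval $I \subseteq [x_0-\delta, x_0+\delta] \cap [0,1]$ of length at least $\delta$ (shrinking $\delta$ if needed; when $x_0 \in \{0,1\}$ this is a one-sided half-interval, which still suffices). On $I$ the quadratic bound gives
\[
\int_0^1 \frac{dx}{f(t,x)} \;\geq\; \int_I \frac{dx}{M\bigl((t-t_0)^2 + (x-x_0)^2\bigr)},
\]
and the substitution $u = (x-x_0)/|t-t_0|$ turns the right-hand side into $\frac{1}{M|t-t_0|}$ times a definite integral of $1/(1+u^2)$ over an interval containing $[-1,1]$ whenever $|t-t_0| \leq \delta$. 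This yields a bound $\int_0^1 dx/f(t,x) \geq c_1/|t-t_0|$ for some $c_1 > 0$ depending only on $M$, valid for $0 < |t-t_0| \leq \delta$.

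For $t \in [a,b]$ with $|t-t_0| > \delta$, continuity bounds $f$ on the compact set $[a,b]\times[0,1]$ by some $M_1 > 0$, so whenever $f(t,\cdot)>0$ on $[0,1]$ we have $\int_0^1 dx/f(t,x) \geq 1/M_1$. Setting $c := \min\{c_1,\, \delta/M_1\}$ then gives $c/|t-t_0| \leq c/\delta \leq 1/M_1$ in this far-field regime, covering both cases uniformly.

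I do not expect a serious obstacle here; the argument is a standard Taylor-plus-explicit-integral estimate. The only two small points requiring care are (i) that $\nabla f(t_0,x_0) = 0$ even when $x_0 \in \{0,1\}$, which is why the hypothesis that $f$ is smooth and non-negative on all of $\R^2$ (rather than just on $[a,b]\times[0,1]$) matters, and (ii) matching the constants so that the two regimes $|t-t_0| \leq \delta$ and $|t-t_0| > \delta$ are handled by a single $c$.
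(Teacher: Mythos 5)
Your proposal is correct and follows essentially the same route as the paper: both exploit that non-negativity on all of $\R^2$ forces $\nabla f(t_0,x_0)=0$ (even for $x_0\in\{0,1\}$), bound $f$ above by a quadratic $M\bigl((t-t_0)^2+(x-x_0)^2\bigr)$, and integrate $1/f$ over an interval of width comparable to $|t-t_0|$ around $x_0$. The only difference is bookkeeping: the paper normalizes so the quadratic bound holds on the whole rectangle and restricts to $|x-x_0|\le|t|$, while you use a local Taylor neighborhood plus a separate far-field estimate, which is equally fine.
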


\begin{proof}
By translating $t$, we may assume that $t_0 = 0$, so $f(0, x_0) = 0$.
By enlarging the interval $[a, b]$, we may assume that it is symmetric about $0$.
By scaling $t$, we may assume that $[a, b] = [-1, 1]$.
Then, by smoothness, there exists $b > 0$ such that 
\[
  f(t,x) \leq b (t^2 + (x - x_0)^2)
\]
for every $t \in [-1,1]$ and $x \in [0,1]$. The squares comes from the fact that $f$ is assumed to be a non-negative smooth function, so $\partial_x f(0, x_0) =  \partial_t f(0, x_0) = 0$.
In particular, if $|x - x_0| \leq |t|$, then
\[
  f(t,x) \leq 2 b t^2 .
\]
Choose $t \in [-1,1]$ such that $f(t,x) > 0$ for all $x \in [0,1]$.
Integrating, we get
\[
  \int_0^1 \frac{1}{f(t,x)} \, dx
  \geq \int_{\max(x_0 - |t|,0)}^{\min(x_0 + |t|,1)} \frac{1}{f(t,x)} \, dx
  \geq \int_{\max(x_0 - |t|,0)}^{\min(x_0 + |t|,1)} \frac{1}{2 b t^2} \, dx
  \geq \frac{|t|}{2 b t^2} = \frac{c}{|t|},
\]
as claimed.  The last inequality comes from the fact that the interval of integration has width at least $|t|$.
\end{proof}

Now we prove the main result of this appendix.

\begin{prop}\label{prop:functional-testing-zero}
 There exists a smooth map $F:C^\infty(\R,\R^{\geq 0}) \to \R^{\geq 0}$ such that 
 $F(f)=0$ if and only if $f(x)=0$ for some $x \in [0,1]$.
\end{prop}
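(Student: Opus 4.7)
The plan is to set
\[
F(f) := \begin{cases} \exp\!\bigl(-\bigl(\int_0^1 dx/f(x)\bigr)^{2}\bigr) & \text{if } f(x) > 0 \text{ for all } x \in [0,1], \\ 0 & \text{otherwise.} \end{cases}
\]
The zero characterization is immediate: if $f > 0$ on $[0,1]$, then $1/f$ is continuous on a compact interval so the integral is finite and $F(f) > 0$; if $f(x_0) = 0$ for some $x_0 \in [0,1]$, then $F(f) = 0$ by definition. Note that this covers all smooth non-negative $f$, since $f(x_0) = 0$ with $f \geq 0$ forces $f'(x_0) = 0$, so $1/f$ has a non-integrable singularity at $x_0$ and extending $F$ by zero in the singular case is consistent with the limit of $e^{-I^2}$.

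For smoothness of $F$, cartesian closedness of $\Diff$ reduces the problem to showing that for every smooth $\hat f : U \times \R \to \R^{\geq 0}$ with $U$ open in a Euclidean space, the map $G(u) := F(\hat f(u, \cdot))$ is smooth on $U$. On the open set $V := \{u \in U : \hat f(u, x) > 0 \text{ for all } x \in [0,1]\}$ this follows from standard differentiation under the integral, using local compactness to bound $1/\hat f$ uniformly. Outside $V$, $G \equiv 0$. To handle boundary points $u_0 \in \partial V$, Boman's theorem reduces the verification to smoothness along every smooth curve $\gamma : \R \to U$ through $u_0$, putting us in the one-parameter setup of Lemmas \ref{lem:less-than-min} and \ref{lem:greater-than-c/t}.

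With $I(s) := \int_0^1 dx/\hat f(s, x)$ and $G(s) = e^{-I(s)^2}$ on its natural domain, Lemma \ref{lem:greater-than-c/t} gives $I(s) \geq c/|s - s_0|$, so $G(s) \leq \exp(-c^2/(s-s_0)^2) \to 0 = G(s_0)$. For higher derivatives, formal differentiation yields $G^{(k)} = P_k(I, I', \ldots, I^{(k)})\, e^{-I^2}$ for a universal polynomial $P_k$, and each $I^{(j)}$ is a finite sum of integrals $\int_0^1 (\partial^\alpha_s \hat f) \hat f^{-m} \, dx$ with $m \leq j+1$. The crude bound $\int_0^1 1/\hat f^{j+1} \leq I \cdot (1/\min_x \hat f)^j$, combined with Lemma \ref{lem:less-than-min} (giving $1/\min_x \hat f \leq C^{-1} e^{C I(s)}$ with $C$ uniform on a compact neighborhood of $s_0$), yields $|I^{(j)}(s)| \leq M_j \, I(s) \, e^{Cj I(s)}$, and hence $|G^{(k)}(s)| \leq Q_k(I(s)) \, e^{D_k I(s) - I(s)^2}$, which tends to $0$ faster than any polynomial in $|s - s_0|$. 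A standard criterion then promotes this to smoothness of $G \circ \gamma$ at $s_0$ with all derivatives vanishing there, completing the argument.

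The main obstacle is precisely this derivative estimation. The square in the exponent of $F$ is essential: Lemma \ref{lem:less-than-min} only controls $1/\min \hat f$ by a single exponential in $I$, so repeated differentiation of $I$ picks up factors growing like $e^{Cj I}$. Without the quadratic term $-I^2$ in $\log G$, this growth would swamp the decay of $G$ for large $k$; the quadratic term beats the linear growth $D_k I$ for every $k$ simultaneously, which is the mechanism that makes the whole construction work.
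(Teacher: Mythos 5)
Your proposal is correct and follows essentially the same route as the paper: reduce to curves via cartesian closedness and Boman's theorem, control $1/\min_x \hat f$ and $\int_0^1 dx/\hat f$ by the two appendix lemmas, and show by induction that all derivatives vanish at parameters where the function has a zero on $[0,1]$. The only difference is your choice of cut-off $\exp\bigl(-I^2\bigr)$ in place of the paper's $\exp\bigl(-\exp(I)\bigr)$, and this is immaterial: in both cases the derivative terms grow only like $e^{O_k(I)}$ (polynomially in $e^{I}$), which either super-exponential decay in $I$ beats, so your estimate closes just as the paper's does.
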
 

\begin{proof}
 We are going to show that $F:C^\infty(\R,\R^{\geq 0}) \to \R^{\geq 0}$ defined by 
 \begin{equation}\label{eq:smoothmap}
  F(f)=\begin{cases} \exp(-\exp(\int_0^1 \frac{1}{f(x)} \, dx)), & \textrm{if $f(x)>0$ for all $x \in [0,1]$,} \\
                     0,                                      & \textrm{otherwise}
       \end{cases}
 \end{equation}
 satisfies the requirements.
 By definition of $F$, we have $F(f)=0$ if and only if $f(x)=0$ for some $x \in [0,1]$.
 We are left 
 to show the smoothness of $F$. By Boman's theorem (see, e.g., \cite[Corollary~3.14]{KM}), it is enough 
 to show that for every plot $p:\R \to C^\infty(\R,\R^{\geq 0})$, $F \circ p:\R \to \R$ is smooth.
 The map $\tilde{p} : \R \times \R \to \R$ defined by $\tilde{p}(t, x) = p(t)(x)$ is smooth and non-negative,
 and 
 \[
    (F \circ p)(t) =
            \begin{cases} \exp(-\exp(\int_0^1 \frac{1}{\tilde{p}(t,x)} \, dx)), & \textrm{if $\tilde{p}(t,x) > 0$ for all 
                                                                                                        $x \in [0,1]$,} \\
                          0,                                                & \textrm{otherwise}.
            \end{cases}
 \]
 
 It is easy to see that $A := \{t \in \R \mid \tilde{p}(t,x) > 0 \text{ for all } x \in [0,1]\}$ is open in $\R$, 
 and $F \circ p$ is smooth on $A$.
 To prove that $F \circ p$ is smooth, it suffices to show that for each $n \geq 0$
 and each $t_0 \in \R \setminus A$, $(F \circ p)^{(n)}(t_0)$ exists and is zero.
 We will prove this by induction on $n$.
 For $n = 0$, this holds by definition.

 For the inductive step, let $t_0 \in \R \setminus A$ and consider
 $(F \circ p)^{(n+1)}(t_0) = \lim_{t \to t_0} \frac{(F \circ p)^{(n)}(t)}{t-t_0}$.
 For $t \in \R \setminus A$, the numerator is zero by the inductive hypothesis.
 So we must bound
 \[
   \frac{|(F \circ p)^{(n)}(t)|}{|t-t_0|}
 \]
 for $t$ in $A$.

 By a separate induction, one can show that for $t \in A$
 \[
  (F \circ p)^{(n)}(t) = \exp(-G(t)) \, \sum_i G(t)^{n_i} \, \prod_j D_{ij}(t) ,
 \] 
 where $i$ ranges over a finite set, $G(t) = \exp(\int_0^1 \frac{1}{\tilde{p}(t,x)} \, dx)$, 
 each $n_i$ is in $\N$,
 $j$ ranges over a finite set depending on $i$,
 and $D_{ij}(t) = \int_0^1 \frac{E_{ij}(t,x)\ }{\tilde{p}(t,x)^{m_{ij}}} \, dx$
 with $E_{ij}$ a polynomial of finitely many iterated partial 
 derivatives of $\tilde{p}$ with respect to $t$ and $m_{ij} \in \N^{\geq 1}$.

 We fix an $i$ and a neighbourhood $[a, b]$ of $t_0$, and
 will bound $\exp(-G(t)) \, G(t)^{n_i} \, \prod_j D_{ij}(t)$ for $t \in [a, b] \cap A$.
 Using Lemma~\ref{lem:less-than-min}, choose $C>0$ so that
 $C G(t)^{-C} \leq m(t)$ for $t \in [a, b] \cap A$,
 where $m(t) := \min_{x \in [0,1]} \tilde{p}(t,x)$.
 For each $j$, let $M_{ij} := \max_{t \in [a,b], x \in [0,1]} \{|E_{ij}(t,x)|\}$
 and let $M_i := \prod_j M_{ij}$.
 Then for $t \in [a, b] \cap A$ we have 
 \begin{align*}
  \big| \exp(-G(t)) \, G(t)^{n_i} \, \prod_j D_{ij}(t) \big|
                          & = \exp(-G(t)) \, G(t)^{n_i} \,
                                              \prod_j \left| \int_0^1 \frac{E_{ij}(t,x)}{\tilde{p}(t,x)^{m_{ij}}} \, dx \right| \\
                          & \leq M_i \, \exp(-G(t)) \, G(t)^{n_i} \, 
                                                         \prod_j \int_0^1 \frac{1}{\tilde{p}(t,x)^{m_{ij}}} \, dx \\
                          & \leq M_i \, \exp(-G(t)) \, G(t)^{n_i} \, \prod_j \frac{1}{m(t)^{m_{ij}}} \\
                          & \leq M_i \, \exp(-G(t)) \, G(t)^{n_i} \, \prod_j \left(\frac{G(t)^C}{C}\right)^{m_{ij}} \\ 
                          & = M'_i \, \exp(-G(t)) \, G(t)^{n_i'}.
 \end{align*}
 Here, $M'_i = M_i \prod_j \frac{1}{C^{m_{ij}}}$ and $n_i' = n_i + \sum_j Cm_{ij}$ are constants. 

 By Lemma~\ref{lem:greater-than-c/t}, there is a $c > 0$ such that
 $\ln G(t) \geq c/|t - t_0|$ for $t$ in $[a, b] \cap A$.
 Therefore,
\[
  \frac{M'_i \, \exp(-G(t)) \, G(t)^{n_i'}}{|t - t_0|}
\leq \frac{M'_i}{c} \, \exp(-G(t)) \, G(t)^{n_i'} \, \ln G(t).
\]
 It also follows that $G(t) \to +\infty$ as $t \to t_0$ through $A$, and 
 so the right-hand-side goes to $0$ as $t \to t_0$ through $A$.

 Since this is true for each $i$ and by the induction hypothesis $(F \circ p)^{(n)}(t) = 0$ for $t \in \R \setminus A$,
 it follows that
\[
  (F \circ p)^{(n+1)} = \lim_{t \to t_0} \frac{(F \circ p)^{(n)}(t)}{t-t_0} = 0 .
\]
 This completes the inductive step and the proof of the proposition.
\end{proof}

We would like to thank Chengjie Yu for coming up with the formula~\eqref{eq:smoothmap}
and sketching the proof of the proposition, and Gord Sinnamon and Willie Wong for ideas that led 
towards this result.

\vspace*{10pt}
\end{document}